\def\rad{\operatorname{rad}}
\def\radop{\rad_{\mathrm{op}}}
\def\radcl{\rad_{\mathrm{cl}}}
\def\radopc{\rad_{\mathrm{op},c}}
\def\radclc{\rad_{\mathrm{cl},c}}
\def\eqc{{\mathrm{ecc}}}  
\def\mix{{\mathrm{mix}}}
\def\hc{h$^{{\rm{coars}}}$}
\def\hmix{h$^\mix$}
\def\val{{\mathrm{val}}}
\def\eqc{{\mathrm{ecc}}}  
\def\11{{\mathbf 1}}
\def\QQ{{\mathbb Q}}
\def\RR{{\mathbb R}}
\def\cA{{\mathcal A}}
\def\cB{{\mathcal B}}
\def\cL{{\mathcal L}}
\def\cM{{\mathcal M}}
\def\cO{{\mathcal O}}
\def\cT{{\mathcal T}}
\mathchardef\alphag="7C0B \mathchardef\betag="7C0C
\mathchardef\gammag="7C0D \mathchardef\deltag="7C0E
\mathchardef\varepsilong="7C22 \mathchardef\varphig="7C27
\mathchardef\psig="7C20 \mathchardef\zetag="7C10
\mathchardef\epsilong="7C0F \mathchardef\rhog="7C1A
\mathchardef\taug="7C1C \mathchardef\upsilong="7C1D
\mathchardef\iotag="7C13 \mathchardef\thetag="7C12
\mathchardef\pig="7C19 \mathchardef\sigmag="7C1B
\mathchardef\etag="7C11 \mathchardef\omegag="7C21
\mathchardef\kappag="7C14 \mathchardef\lambdag="7C15
\mathchardef\mug="7C16 \mathchardef\xig="7C18
\mathchardef\chig="7C1F \mathchardef\nug="7C17
\mathchardef\varthetag="7C23 \mathchardef\varpig="7C24
\mathchardef\varrhog="7C25 \mathchardef\varsigmag="7C26
\mathchardef\Omegag="7C0A \mathchardef\Thetag="7C02
\mathchardef\Sigmag="7C06 \mathchardef\Deltag="7C01
\mathchardef\Phig="7C08 \mathchardef\Gammag="7C00
\mathchardef\Psig="7C09 \mathchardef\Lambdag="7C03
\mathchardef\Xig="7C04 \mathchardef\Pig="7C05
\mathchardef\Upsilong="7C07
\newtheorem{thm}[subsubsection]{Theorem}
\newtheorem{lem}[subsubsection]{Lemma}
\newtheorem{cor}[subsubsection]{Corollary}
\theoremstyle{definition}
\newtheorem{defn}[subsubsection]{Definition}
\newtheorem{def-prop}[subsubsection]{Proposition-Definition}
\newtheorem{def-theorem}[subsubsection]{Theorem-Definition}
\newtheorem{def-lem}[subsubsection]{Lemma-Definition}
\theoremstyle{remark}
\newtheorem{question}[subsubsection]{Question}
\theoremstyle{plain}
\numberwithin{equation}{subsection}
\def\boxit#1#2{\setbox1=\hbox{\kern#1{#2}\kern#1}%
\dimen1=\ht1 \advance\dimen1 by #1 \dimen2=\dp1 \advance\dimen2 by
#1
\setbox1=\hbox{\vrule height\dimen1 depth\dimen2\box1\vrule}%
\setbox1=\vbox{\hrule\box1\hrule}%
\advance\dimen1 by .4pt \ht1=\dimen1 \advance\dimen2 by .4pt
\dp1=\dimen2 \box1\relax}
\renewcommand{\theequation}{\thesubsection.\arabic{equation}}
\mathchardef\alphag="7C0B \mathchardef\betag="7C0C
\mathchardef\gammag="7C0D \mathchardef\deltag="7C0E
\mathchardef\varepsilong="7C22 \mathchardef\varphig="7C27
\mathchardef\psig="7C20 \mathchardef\zetag="7C10
\mathchardef\epsilong="7C0F \mathchardef\rhog="7C1A
\mathchardef\taug="7C1C \mathchardef\upsilong="7C1D
\mathchardef\iotag="7C13 \mathchardef\thetag="7C12
\mathchardef\pig="7C19 \mathchardef\sigmag="7C1B
\mathchardef\etag="7C11 \mathchardef\omegag="7C21
\mathchardef\kappag="7C14 \mathchardef\lambdag="7C15
\mathchardef\mug="7C16 \mathchardef\xig="7C18
\mathchardef\chig="7C1F \mathchardef\nug="7C17
\mathchardef\varthetag="7C23 \mathchardef\varpig="7C24
\mathchardef\varrhog="7C25 \mathchardef\varsigmag="7C26
\mathchardef\Omegag="7C0A \mathchardef\Thetag="7C02
\mathchardef\Sigmag="7C06 \mathchardef\Deltag="7C01
\mathchardef\Phig="7C08 \mathchardef\Gammag="7C00
\mathchardef\Psig="7C09 \mathchardef\Lambdag="7C03
\mathchardef\Xig="7C04 \mathchardef\Pig="7C05
\mathchardef\Upsilong="7C07
\newcommand{\RV}{\mathrm{RV}}
\newcommand{\rv}{\operatorname{rv}}
\newcommand{\Th}{\operatorname{Th}}
\newcommand{\ltz}{\mathrel{<\joinrel\llap{\raisebox{-1ex}{$\scriptstyle{0}\mkern8mu$}}}}
\DeclareMathOperator*{\im}{{im}}
\newcommand{\grad}{\operatorname{grad}}
\newcommand{\Rtimes}{\mathlarger{\mathlarger{\rtimes }}}
\def\Jac{\operatorname{Jac}}
\definecolor{immi}{rgb}{0,.6,.1}
\newbox\removebox
\newcommand\remove[1]{%
\setbox\removebox=\ifmmode\hbox{$#1$}\else\hbox{#1}\fi%
\leavevmode
\rlap{\textcolor{blue}{\vrule height0.8ex depth-0.6ex width\wd\removebox}}%
\box\removebox
}
\long\def\bigremove#1{%
\par\setbox\removebox=\vbox{#1}%
\vbox{%
\vbox to0pt{\hbox{\tikz\draw[color=blue,thick] (0,0) -- (\wd\removebox,-\ht\removebox)  (\wd\removebox,0) -- (0,-\ht\removebox);}}
\box\removebox
}
}
\newcommand\acl{\mathrm{acl}}
\newcommand\dcl{\mathrm{dcl}}
\definecolor{orange}{rgb}{1,0.5,0}
\newcommand{\private}[1]{\leavevmode{\scriptsize\color{blue}\marginpar{{\scriptsize Private comment}}#1\par}}
\renewcommand{\private}[1]{}
\thanks{
The author was partially supported by KU Leuven IF C14/17/083, and partially by F.W.O. Flanders (Belgium) with grant number 11F1921N}
\title[Geometric criteria for $\ell$-h-minimality]{Hensel minimality: Geometric criteria for $\ell$-h-minimality}
\author[Vermeulen]{Floris Vermeulen}
\address{KU Leuven, Department of Mathematics, B-3001 Leu\-ven, Bel\-gium}
\email{floris.vermeulen@kuleuven.be}
\urladdr{https://sites.google.com/view/floris-vermeulen/homepage}
\begin{document}

\begin{abstract}
Recently, Cluckers, Halupczok and Rideau-Kikuchi developed a new axiomatic framework for tame non-Archimedean geometry, called Hensel minimality. It was extended to mixed characteristic together with the author. Hensel minimality aims to mimic o-minimality in both strong consequences and wide applicability. In this article, we continue the study of Hensel minimality, in particular focusing on $\omega$-h-minimality and $\ell$-h-minimality, for $\ell$ a positive integer. Our main results include an analytic criterion for $\ell$-h-minimality, preservation of $\ell$-h-minimality under coarsening of the valuation and $\ell$-dimensional geometry. 
\end{abstract}

\maketitle

\section{Introduction}

It is a classical result due to Tarski and Seidenberg that the theory of real closed fields (in the ring language) admits quantifier elimination and is complete. This in turn puts strong restrictions on the definable sets in this structure, and forms a typical example of \emph{tame behaviour}. Since then, many other theories have been shown to exhibit some form of tameness. For example, there is the theory of $\RR$ with restricted analytic functions \cite{vanDenDries.Ran}, or the theory of $\RR$ with a symbol for the global exponential $\exp$ \cite{Wilkie0}. For real closed fields, this idea of tame behaviour is very well axiomatized by the notion of o-minimality \cite{Pill-Stein, KnightPillSt, vdD}. Indeed, it is fair to say that o-minimality is the answer to Grothendieck's idea of ``topologie moder\'{e}e'' in the Archimedean setting. In recent years o-minimality has found various deep diophantine applications, for example towards the Andr\'{e}--Oort conjecture \cite{Pila}.

For non-Archimedean fields such as the $p$-adics $\QQ_p$ or fields of Laurent series $k((t))$, it has been known for a long time that similar forms of tame behaviour occur \cite{AK1, AK2, AK3, Cohen, Ersov, Mac}. These results formed the basis for many further developments in the model theory of valued fields \cite{Pas}, motivic integration \cite{DL, DLinvent, DLbarc}, Berkovich spaces \cite{HL} and point counting applications in number theory \cite{Denef}. Since the conception of o-minimality in the 80s, people have been looking for an axiomatic framework to explain many instances of tameness in the non-Archimedean setting. Mimicking the definition of o-minimality directly yields the notions of P-minimality \cite{Haskell} and C-minimality \cite{HM, Macpherson}. Geared towards integration there are the notions of b-minimality \cite{CLb} and V-minimality \cite{HK}. However, each of these notions has some shortcomings, either to their consequences, their generality, or the naturality of their axioms. The recent notion of Hensel minimality aims to fix this by providing a natural framework for tame non-Archimedean geometry with strong consequences \cite{CHR}. It provides a natural axiomatic setting in which to study the above mentioned applications, such as Pila--Wilkie point counting and Yomdin--Gromov parametrizations \cite{CHRV, CNV}, t-stratifications \cite{CH_tstrat} and motivic integration. Hensel minimality was developed by Cluckers, Halupczok and Rideau-Kikuchi in equicharacteristic zero and extended to mixed characteristic together with the author \cite{CHRV}. 

In more detail, Cluckers, Halupczok and Rideau-Kikuchi introduce a hierarchy of tameness notions: for each natural number $\ell$ (including zero) and for $\ell=\omega$, they introduce the notion of $\ell$-h-minimality. We use the term ``Hensel minimality'' or ``h-minimality'' to speak about these notions implicitly. The key notions here are $\omega$-h-minimality and 1-h-minimality, with $\omega$-h-minimality being the stronger of the two. Many classical theories such as the $p$-adics or fields of Laurent series are $\omega$-h-minimal (in the valued field language), but some more exotic examples are only known to be 1-h-minimal.

The aim of this article is to continue the study of h-minimality, focussing on $\omega$-h-minimality. This is done by studying the weaker notions of $\ell$-h-minimality for arbitrary $\ell\geq 1$, since $\omega$-h-minimality is equivalent to $\ell$-h-minimality for every $\ell$. The key result is a geometric criterion for $\ell$-h-minimality, based on definable functions on $\ell$-dimensional space. Such a criterion was given in \cite{CHR} for $\ell=1$. This allows us to extend and strengthen many results from \cite{CHR, CHRV}, which were only known for $1$-h-minimality, to $\ell$-h-minimal theories as well (for every $\ell$). In particular, we prove that $\ell$-h-minimality is preserved under $\RV$-enrichment and after coarsening of the valuation. This allows us to give a treatment of h-minimality in mixed characteristic as well. A second goal of this article is the further development of higher dimensional geometry under h-minimality. We again improve results from \cite{CHR, CHRV} assuming $\ell$-h-minimality and prove various new results. 

Our work on higher-dimensional geometry may be a first step towards Taylor approximation and parametrization results in all dimensions, refining and strengthening results from \cite{CHR} in dimension one. Such parametrizations are motivated by Diophantine applications on counting points of bounded height, where they form a key ingredient. In dimension one this already works well, see \cite{CHRV, CNV}. A second motivation is towards higher order and stronger results on $t$-stratifications, in the line of \cite{CH_tstrat}. Finally, our results answer several of the open questions in \cite{CHRV} and we propose several more questions in Section \ref{sec:further.questions}.

\subsection{Main results} Let us briefly discuss our main results. In \cite[Thm.\,2.9.1]{CHR} and \cite[Thm.\,2.2.7]{CHRV} an analytic criterion was given for 1-h-minimality. This shows that 1-h-minimality is of geometric nature. In essence, this criterion asserts that for every definable function $f: K\to K$ there is some finite definable set $C$, such that $f$ maps open balls disjoint from $C$ to open balls (or singletons), and scales the radii of such balls in a compatible manner. We provide a similar such analytic criterion for $\ell$-h-minimality for any $\ell\geq 1$. The criterion is very similar, but now for definable maps $f: K^\ell\to K$. The notion of open balls in $K^\ell$ doesn't make sense, and is instead replaced by certain twisted boxes in a cell decomposition. This result shows that also $\ell$-h-minimality is of geometric nature, up to $\ell$-dimensional objects. In particular, since $\omega$-h-minimality is equivalent to $\ell$-h-minimality for every integer $\ell$, this shows that also $\omega$-h-minimality is of geometric nature. A second main result is that $\ell$-h-minimality is preserved under $\RV$-expansion of the language, for any $\ell$. This extends \cite[Thm.\,4.1.19]{CHR}, where this was proven for $\ell=0,1$ or $\omega$.

We prove that $\ell$-h-minimality in equicharacteristic zero is preserved after arbitrary coarsening of the valuation, extending part of \cite[Thm.\,2.2.7]{CHRV}. Valuation coarsening allows us to develop a good working theory of $\ell$-h-minimality in mixed characteristic, as in \cite{CHRV}. 

We extend many other results from \cite{CHR} as well, in particular on higher-dimensional geometry. One of the main results here is a compatible domain-image preparation for definable functions $f: K^\ell\to K$ under $\ell$-h-minimality, extending \cite[Thm.\,5.2.4 Add.\,3]{CHR}. This results states that we can find a cell decomposition of the domain and image of $f$ which are compatible, meaning that $f$ maps twisted boxes in the domain to twisted boxes in the image. We prove a weaker version of this result under just $1$-h-minimality. We also prove a refined version of the supremum Jacobian property \cite[Thm.\,5.4.10]{CHR} for definable functions $f: K^\ell\to K$ under $\ell$-h-minimality. Finally, we prove a product of ideal preparation result. This theorem extends \cite[Thm.\,4.2.3]{CHR} to arbitrary ideals $I$. Moreover, under $\ell$-h-minimality, it shows that one can still prepare objects defined using more than $\ell$ parameters from $\RV_\lambda$, at the cost of some strength of preparation.
 
\subsection{Outline} In Section~\ref{sec:equivalent.condition} we briefly recall the main notions from~\cite{CHR} and~\cite{CHRV} about Hensel minimality. We state and prove our main result, giving an analytic criterion for $\ell$-h-minimality. We also prove that $\ell$-h-minimality is preserved under $\RV$-expansion of the language. In Section \ref{sec: mixed char} we define $\ell$-h-minimality in mixed characteristic and show that coarsening the valuation preserves $\ell$-h-minimality. We set up some basic ideas to transfer result from equicharacteristic zero to mixed characteristic. In Section \ref{sec:higher.dimension} we prove the aforementioned results about higher-dimensional geometry under $\ell$-h-minimality. We give mixed characteristic versions of many results as well. Finally, in Section \ref{sec:product.of.ideal.prep} we prove our product of ideal preparation. Section \ref{sec:further.questions} contains some natural follow-up questions related to this work.

\subsection{Acknowledgements} The author would like to thank Raf Cluckers for interesting discussions around this article, and many useful comments on an earlier draft. The author thanks Immanuel Halupczok for the many helpful comments on an earlier draft, in particular on Section \ref{sec:product.of.ideal.prep}. The author thanks the referee for a careful reading of this article.

\section{A criterion for $\ell$-h-minimality}\label{sec:equivalent.condition}

\subsection{Definitions and setting}\label{sec:defn.equivalences}

Let $\cT$ be a theory of characteristic zero valued fields in a language $\cL$ expanding the language of valued fields $\cL_\val = \{+, \cdot, \cO\}$. For most of this section we will restrict to equicharacteristic zero, but for now we allow the models of $\cT$ to have mixed characteristic as well.

Let us recall the set-up from \cite{CHR}. Let $K$ be a (non-trivial) valued field with valuation ring $\cO_K$. The maximal ideal will be denoted by $\cM_K$, while the value group is denoted by $\Gamma_K$. We use the multiplicative notation $|\cdot|: K\to \Gamma_K$ for the valuation. An open, resp. closed, ball in $K$ is a set of the form
\begin{align*}
B_{<\lambda}(a) = \{x\in K\mid |x-a| < \lambda\}, \quad B_{\leq \lambda}(a) = \{x\in K\mid |x-a|\leq \lambda\},
\end{align*}
for some $a\in K, \lambda\in \Gamma_K^\times$. Note that balls are always non-empty.

For $I$ a proper non-trivial ideal of $\cO_K$, let $\RV_I^\times$ be the quotient $K^\times/(1+I)$. The natural map $K^\times \to \RV_I^\times$ is denoted by $\rv_I$. We define $\RV_I = \RV_I^\times \cup\{0\}$ and extend the map $\rv_I$ to all of $K$ by $\rv_I(0) = 0$. If $I$ is the open ball $B_{<\lambda}(0)$ for some $\lambda\in \Gamma_K^\times$ we denote $\RV_I$ and $\rv_I$ by $\RV_\lambda$ and $\rv_\lambda$. Moreover, we let $\RV$ be $\RV_1$ and similarly for $\rv$. The map $\rv$ is called the \emph{leading term map}. If $m$ is a positive integer, we define for simplicity $\RV_m = \RV_{|m|}$. Note that if $K$ is of equicharacteristic zero, then $\RV_m = \RV$ for each positive integer $m$, since $|m|=1$. So $\RV_m$ will only play a role in mixed characteristic.

We recall what preparing subsets of $K$ means. Let $X$ be a subset of $K$ and let $\lambda \in \Gamma_K^\times$. We say that $X$ is \emph{$\lambda$-prepared} by a finite set $C$ if for all $x, x'\in K$ we have that if 
\[
\rv_\lambda(x-c) = \rv_\lambda(x'-c), \text{ for all }c\in C,
\] 
then either $x$ and $x'$ are both in $X$, or they are both not in $X$. In other words, the condition whether some $x\in K$ belongs to $X$ depends only on the tuple $(\rv_\lambda(x-c))_{c\in C}$. Note the similarity with o-minimality when $X\subset \RR$ is definable and $\rv$ is replaced by the sign function on $\RR$, see also the introduction of \cite{CHR}.

If $\xi$ is in $\RV_\lambda^\times$ and $c\in K$ then the set 
\[
c + \rv_\lambda^{-1}(\xi) 
\]
is said to be \emph{a ball $\lambda$-next to $c$}. If $C$ is a finite subset of $K$, then an open ball $B$ is said to be \emph{$\lambda$-next to $C$} if it is the intersection of balls $B_c$ for $c\in C$, where every ball $B_c$ is $\lambda$-next to $c$.

The condition of preparation can be restated as follows. A set $X\subset K$ is $\lambda$-prepared by the finite set $C$ if every ball $\lambda$-next to $C$ is either contained in $X$, or disjoint from $X$.

All of the above definitions also make sense for a proper ideal $I$ in $\cO_K$. In that case, we speak of a set being $I$-prepared by $C$, and of balls $I$-next to $C$. 

We recall from \cite{CHRV} the definition of $\ell$-\hmix-minimality.

\begin{defn}[{{\cite[Def.\,2.2.1]{CHRV}}}]
Let $\ell\geq 0$ be an integer or $\omega$. We say that $\cT$ is \emph{$\ell$-\hmix-minimal} if the following holds in every model $K$ of $\cT$. Let $n\geq 1$ be an integer, $\lambda$ be in $\Gamma_K^\times$, $\lambda\leq 1$, $A$ be a subset of $K$, $A'$ be a subset of $\RV_\lambda$ with $\#A'\leq \ell$ and let $X\subset K$ be $(A\cup \RV_n\cup A')$-definable. Then there exists an integer $m\geq 1$ such that $X$ is $|m|\lambda$-prepared by a finite $A$-definable set $C\subset K$. 
\end{defn}

If all models of $\cT$ have equicharacteristic zero, we speak simply of $\ell$-h-minimality. In that case, this definition simplifies, as $|m|=1$ for any non-zero integer $m$, and $\RV_n = \RV$. In general, $\lambda$-preparation in equicharacteristic zero becomes $|m|\lambda$-preparation for some positive integer $m$ in mixed characteristic, see also \cite[Ex.\,2.2.2]{CHRV}.

\subsection{A criterion for $\ell$-h-minimality}\label{sec:criterion}

For the rest of this section, we will assume that all models of $\cT$ have equicharacteristic zero.

In \cite[Thm.\,2.9.1]{CHR}, an analytic criterion for 1-h-minimal theories is given in equicharacteristic zero. This is further extended in \cite[Thm.\,2.2.7]{CHRV} to also include mixed characteristic. The criterion boils down to saying that any definable function $f: K\to K$ maps open balls $1$-next to some finite set $C$ to open balls (or singletons), and preserves their radii in a compatible manner, together with some basic preservation of dimension. We give a similar such analytic criterion for $\ell$-h-minimality ($\ell\geq 1$). Roughly speaking, the criterion asserts that the same holds for any definable function $f: K^\ell\to K$. The role of balls will be played by twisted boxes of a cell decomposition in higher dimension.

For $m\leq n$ denote by $\pi_{\leq m}$ (or $\pi_{<m+1}$) the projection $K^n\to K^m$ onto the first $m$ coordinates and by $\pi_m$ the projection onto the $m$-th coordinate.

\begin{defn} Let $\cT$ be a theory of equicharacteristic zero valued fields in a language expanding the language of valued fields and let $K$ be a model of $\cT$. Let $n\geq 1$ be an integer, let $\lambda\in \Gamma_K^\times$, $\lambda\leq 1$ and let $A\subset K\cup \RV$ be a parameter set. Let $X$ be an $A$-definable subset of $K^n$, $c_i: \pi_{<i}(X)\to K$ be $A$-definable maps and let $r\in \RV_\lambda^n$. Then $X$ is called an $A$-definable \emph{$\lambda$-twisted box} if 
\[
X = \{x\in K^n\mid (\rv_\lambda(x_i - c_i(\pi_{<i}(x)))_{i=1, ..., n} = r\}.
\]
The tuple $c=(c_i)_i$ is called the \emph{center tuple of $X$}. If $\lambda=1$, we speak simply of a \emph{twisted box}. We will use the notation $\Rtimes_\lambda (c, r)$ for the $\lambda$-twisted box $X$ as defined here, with \emph{center $c$ and value $r$}. For twisted boxes, we use the simpler notation $\Rtimes (c, r)$.

An $A$-definable \emph{decomposition} of $K^n$ is an $A$-definable map $\chi: K^n\to \RV^k$ for some integers $k\geq 1$ such that every fibre $\chi^{-1}(\xi)$ of $\chi$ is an $A\cup\{\xi\}$-definable twisted box whose center tuple moreover depends definably on $\xi$. When we speak about the \emph{twisted boxes} of a decomposition $\chi: K^n\to \RV^k$ we mean the (non-empty) fibres of the map $\chi$. A \emph{$\lambda$-twisted box} of a decomposition is a $\lambda$-twisted box contained in a fibre $\chi^{-1}(\xi)$ with the same center tuple. 

If $I$ is a proper ideal of $\cO_K$, we speak of \emph{$I$-twisted boxes} when we replace $\rv_\lambda$ by $\rv_I$.
\end{defn}

It is important to note that a decomposition consists of both the map $\chi: K^n\to \RV^k$ as well as the center tuples, definably parametrized by $\xi\in \RV^k$. However, we typically refer to the map $\chi$ as the decomposition and suppress the center tuples from notation. Note also that the value $r$ of a twisted box $\chi^{-1}(\xi)$ automatically depends definably on $\xi$. Indeed, if the center tuple of $\chi^{-1}(\xi)$ is $c_{\xi, 1}, \ldots, c_{\xi, n}$ then the value $r\in \RV^n$ of the twisted box $\chi^{-1}(\xi)$ is given by $(\rv(x_i - c_{\xi,i}(\pi_{<i}(x)))_{i=1, ..., n}$ for any $x\in K^n$ with $\chi(x) = \xi$. Also, if $F$ is a $\lambda$-twisted box of $\chi$ contained in $\chi^{-1}(\xi)$ then its value $r'\in \RV_\lambda^n$ always lies above the value $r$ under the component-wise projection map $\RV_\lambda^n\to \RV^n$.

The role of open balls next to some finite set will be played by the fibres of a decomposition in higher dimensions. Assuming 0-h-minimality, these concepts coincide in dimension one after a possible refinement, see Lemma~\ref{lem:decomp.dim.one}. 

We define our analytic criterion for $\ell$-h-minimality.

\begin{defn}
Let $\ell\geq 1$ be an integer and let $\cT$ be a theory of equicharacteristic zero valued fields. We say that $\cT$ satisfies (T$\ell$, D) if the following two properties are satisfied for each model $K$ of $\cT$ and each $A\subset K\cup \RV$.
\begin{itemize}
\item[(T$\ell)$] For any $A$-definable maps $f: K^\ell\to K$, $\psi: K^\ell\to \RV^{k'}$ there exists an $A$-definable decomposition $\chi: K^\ell \to \RV^k$ (for some integer $k\geq 1$) such that
\begin{enumerate}
\item $\psi$ is constant on the twisted boxes of $\chi$, 
\item for any twisted box $F$ of $\chi$, $f(F)$ is an open ball or a singleton, and
\item for any $\lambda\leq 1$ in $\Gamma_K^\times$ and any $\lambda$-twisted box $G$ of $\chi$ contained in the twisted box $F$ of $\chi$, $f(G)$ is either a singleton (if $f(F)$ is) or an open ball of radius $\lambda\radop f(F)$.
\end{enumerate}
We say that $f$ satisfies (T$\ell$) \emph{with respect to $\chi$} if condition 2 and 3 hold.
\item[(D)] For any $A$-definable map $f: K\to K$ the set
\[
\{y\in K\mid f^{-1}(y) \text{ is infinite}\}
\]
is a finite set.
\end{itemize}
\end{defn}

In dimension 1, property (T$\ell$) is almost the same as the valuative Jacobian property \cite[Lem.\,2.8.5]{CHR}. Indeed, in that case we may restate property (T$\ell$) by saying that there exists a finite set $C$ such that for every ball $B$ 1-next to $C$ there exists a $\mu_B\in \Gamma_K$ such that for every open ball $B'\subset B$ we have that $f(B')$ is an open ball of radius $\mu_B\radop B'$.

The following theorem is one of the main results of this article. It states that $\ell$-h-minimality is equivalent to property (T$\ell$, D), extending \cite[Thm.\,2.9.1]{CHR} to $\ell>1$.

\begin{thm}\label{thm: equivalences}
Let $\cT$ be a theory of equicharacteristic zero valued fields in a language $\cL$ expanding the language of valued fields. Let $\ell\geq 1$ be an integer. Then the following are equivalent.
\begin{enumerate}
\item $\cT$ is $\ell$-h-minimal.
\item $\cT$ satisfies (T$\ell$, D).
\end{enumerate}
\end{thm}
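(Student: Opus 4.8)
The plan is to prove the two implications separately, following the strategy of \cite[Thm.\,2.9.1]{CHR} but handling the extra complication that cells in $K^\ell$ are now parametrized over $K^{\ell-1}$-bases rather than over a finite set.

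\emph{From $\ell$-h-minimality to (T$\ell$, D).} First I would deal with (D): this is a standard consequence of, say, 1-h-minimality, since for $1$-h-minimal $\cT$ a definable $f\colon K\to K$ has all but finitely many fibres finite (this is essentially the statement that the set of $y$ with infinite fibre is finite and definable, hence prepared by a finite set, and an infinite fibre cannot be contained in a single ball next to that set unless $y$ lies in it). For (T$\ell$): given $A$-definable $f\colon K^\ell\to K$ and $\psi\colon K^\ell\to\RV^{k'}$, I would first apply a cell-decomposition/preparation theorem available under $\ell$-h-minimality (the higher-dimensional cell decomposition of \cite{CHR}, or iterated $1$-dimensional preparation using $\ell$ parameters one coordinate at a time) to produce an $A$-definable decomposition $\chi\colon K^\ell\to\RV^k$ refining $\psi$, so that condition (1) holds and $f$ restricted to each twisted box is given by a ``nice'' function. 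The content is then that on each twisted box $F$, $f$ behaves like it has the Jacobian property: $f(F)$ is a ball or point, and shrinking to a $\lambda$-twisted box $G\subset F$ scales the radius by exactly $\lambda$. To get this I would fix a twisted box $F=\Rtimes(c,r)$, compose with the natural coordinates to view $f$ as a function on a box, and proceed by induction on $\ell$: slice $F$ over its last coordinate. For each fixed value of the first $\ell-1$ coordinates we get a definable function of one variable on a ball $1$-next to a finite set, to which the valuative Jacobian property in dimension one (available from $1$-h-minimality, \cite[Lem.\,2.8.5]{CHR}) applies; the parametrized version, after a further refinement of $\chi$ using $\ell$-h-minimality to absorb the finitely many parameters appearing, yields conditions (2) and (3).

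\emph{From (T$\ell$, D) to $\ell$-h-minimality.} Here I would verify the defining property: given $\lambda\leq 1$ in $\Gamma_K^\times$, a set $A\subset K$, a set $A'\subset\RV_\lambda$ with $\#A'\le\ell$, and an $(A\cup A')$-definable $X\subset K$, I must produce a finite $A$-definable $C$ that $\lambda$-prepares $X$. Writing $A'=\{\xi_1,\dots,\xi_\ell\}$, I would pick $K$-points $a_i$ with $\rv_\lambda(a_i)=\xi_i$ and regard the indicator condition of $X$ as an $A$-definable family: there is an $A$-definable $Y\subset K^\ell\times K$ such that $x\in X\iff (a_1,\dots,a_\ell,x)\in Y$. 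Applying (T$\ell$) to the coordinate functions and to a defining $\psi$ for the fibres of $Y$ (more precisely, to the function $K^\ell\to\RV$, or a suitable $\RV^{k'}$-valued map, recording enough of the fibre $Y_{(t_1,\dots,t_\ell)}\subset K$), I get an $A$-definable decomposition $\chi\colon K^\ell\to\RV^k$; condition (1) says the relevant data is constant on twisted boxes, and conditions (2)--(3) on the ``resolved'' function together with (D) let me control how the finite set preparing the fibre $Y_t$ varies. The key point is that $(a_1,\dots,a_\ell)$ lies in a $\lambda$-twisted box $G$ of $\chi$, and on $G$ the whole configuration — including the finite set $C_t$ preparing $Y_t$ and the scaling behaviour — is rigid enough that one can read off an $A$-definable finite $C\subset K$ that works; this is exactly where (T$\ell$) is designed to feed in, replacing the ad hoc dimension-one argument of \cite{CHR}. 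One then checks the hypotheses pull back: $\lambda$-preparation of $X$ by $C$ follows because changing $a_i$ within $\rv_\lambda^{-1}(\xi_i)$ keeps us in the same $\lambda$-twisted box, and $\lambda$-twisted boxes map compatibly under $f$ by condition (3). I would also need the base case comparison in dimension one: Lemma~\ref{lem:decomp.dim.one} identifies twisted boxes of a decomposition with balls next to a finite set (after refinement, assuming $0$-h-minimality), so the higher-dimensional language of (T$\ell$) genuinely extends the $1$-dimensional criterion.

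\emph{Main obstacle.} The hard part is the passage from the parametrized one-variable Jacobian property to the genuinely $\ell$-dimensional statement in (T$\ell$): one must produce a \emph{single} decomposition $\chi$ of $K^\ell$ on whose twisted boxes $f$ scales radii by exactly $\lambda$ uniformly, controlling simultaneously the finitely many auxiliary $\RV$-parameters that appear when one applies $1$-h-minimality fibrewise and then re-prepares. Keeping the bookkeeping of centers $c_i\colon\pi_{<i}(X)\to K$ coherent under these successive refinements — so that the final object is still a legitimate decomposition with definably-parametrized center tuples — is the delicate point, and is essentially where the strength ``$\ell$-h-minimal'' (rather than just $1$-h-minimal) is consumed, via its licence to absorb up to $\ell$ parameters from $\RV_\lambda$ into the preparing set.
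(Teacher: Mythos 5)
Your overall architecture matches the paper's: both directions go by induction on $\ell$, the forward one by slicing $f$ into lower-dimensional definable functions and invoking the fibrewise valuative Jacobian property, the converse by viewing the $\ell$ parameters from $\RV_\lambda$ as a definable family over $K^\ell$ and preparing fibrewise. Two essential ideas are missing, however, and without them the sketch does not close. In the direction from $\ell$-h-minimality to (T$\ell$), the image of a twisted box $F$ is the union, over the open ball $B=\pi_1(F)$, of the images of the individual slices, and nothing in your sketch explains why this union is again an open ball with the right scaling; ``absorbing the finitely many parameters by a further refinement'' does not address this. The paper's mechanism (Lemma~\ref{lem: ellhmin to Tell, D}) is to prepare the image of each slice by a finite set $E_x$, convert the family $(E_x)_x$ into $\RV$-parametrized definable functions $g_\eta$ via Lemma~\ref{lem:finite.set.to.functions}, impose the Jacobian property on the $g_\eta$, and then exhibit $f(F)$ as $g_\eta(B)+\bigcup_{\xi}\rv^{-1}(\xi)$, a sum of two open balls. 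It is moreover in preparing the auxiliary set $G_4$, which carries $\ell$ parameters from $\RV_\lambda$, that $\ell$-h-minimality rather than $1$-h-minimality is actually consumed in this direction; your sketch locates the consumption only in the converse.

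In the converse direction you propose to feed ``the finite set $C_t$ preparing $Y_t$'' into (T$\ell$), but (T$\ell$) applies to definable functions $K^\ell\to K$, not to definable families of finite sets, and there is no canonical way to select elements of $C_t$ definably in $t$. The paper resolves this by first passing to an $\RV$-expansion of the language in which $\acl=\dcl$, which in turn requires the separate Theorem~\ref{thm: RVexpansions} that (T$\ell$, D) survives $\RV$-expansion; only then are the $C_y$ covered by finitely many genuinely definable functions $c_1,\dots,c_n\colon K^\ell\to K$ to which (T$\ell$) can be applied, and at the end the extra $\RV$-parameters in the preparing set must be removed again using the finiteness of $\RV$-unions. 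None of this appears in your sketch, and it is not routine bookkeeping: it is where the converse direction actually lives. The geometric endgame you describe (the tuple $(a_1,\dots,a_\ell)$ sits in a $\lambda$-twisted box, whose image under the center functions is an open ball strictly inside the ball spanned by $x\in X$ and $x'\notin X$) is the right one, but it can only be run once the preceding reduction is in place.
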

\begin{proof}
For $\ell=1$ this is Lemma~\ref{lem:1hmin.is.TellD} and Lemma~\ref{lem:TellD.is.1hmin}. For $\ell\geq 2$ this follows from Corollary~\ref{lem: ellhmin to Tell, D} and Lemma~\ref{lem: Tell,D to hmixmin}.
\end{proof}

Even though this result is only stated for $\ell$ an integer, it also implies a similar result for $\ell=\omega$. 

\begin{cor}
Let $\cT$ be a theory of equicharacteristic zero valued fields in a language $\cL$ expanding the language of valued fields. Then the following are equivalent.
\begin{enumerate}
\item $\cT$ is $\omega$-h-minimal.
\item $\cT$ satisfies (T$\ell$, D) for every $\ell\geq 1$.
\end{enumerate}
\end{cor}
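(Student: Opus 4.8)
The plan is to derive this directly from Theorem~\ref{thm: equivalences} by comparing $\omega$-h-minimality with the family of notions $\ell$-h-minimality for integers $\ell$.

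First I would observe that, by Theorem~\ref{thm: equivalences} applied separately for each integer $\ell\geq 1$, statement (2) is equivalent to the assertion that $\cT$ is $\ell$-h-minimal for every integer $\ell\geq 1$. So the whole content reduces to the (essentially known) equivalence between $\omega$-h-minimality and $\ell$-h-minimality for all integers $\ell$.

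Next I would prove that equivalence. The implication from $\omega$-h-minimality to $\ell$-h-minimality (for any integer $\ell$) is immediate from the definition, since the condition $\#A'\leq\ell$ is more restrictive than $\#A'\leq\omega$. For the converse, assume $\cT$ is $\ell$-h-minimal for every integer $\ell\geq 1$, and let $K\models\cT$, $n\geq 1$, $\lambda\in\Gamma_K^\times$ with $\lambda\leq 1$, $A\subseteq K$, $A'\subseteq\RV_\lambda$ with $\#A'\leq\omega$, and $X\subseteq K$ an $(A\cup\RV_n\cup A')$-definable set. Since $X$ is cut out by a single first-order formula, only finitely many parameters from $A'$ actually occur in it; letting $A''\subseteq A'$ be this finite set and $\ell:=\max(1,\#A'')$, the set $X$ is $(A\cup\RV_n\cup A'')$-definable with $\#A''\leq\ell$, so $\ell$-h-minimality yields an integer $m\geq 1$ with $X$ being $|m|\lambda$-prepared by a finite $A$-definable set $C\subseteq K$. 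As $K$ has equicharacteristic zero, $|m|=1$, so $X$ is $\lambda$-prepared by $C$, which is exactly what $\omega$-h-minimality requires.

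I do not expect a genuine obstacle here: the corollary is a formal consequence of Theorem~\ref{thm: equivalences} together with the standard reduction of the parameter set $A'$ in the definition of $\omega$-h-minimality to a finite subset, using that a first-order formula involves only finitely many parameters. In fact one could shorten the argument even further by simply citing the equivalence of $\omega$-h-minimality with $\ell$-h-minimality for all integers $\ell$ from \cite{CHR,CHRV} and combining it with Theorem~\ref{thm: equivalences}.
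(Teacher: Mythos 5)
Your proposal is correct and matches the paper's proof, which simply combines Theorem~\ref{thm: equivalences} with the known equivalence of $\omega$-h-minimality and $\ell$-h-minimality for all integers $\ell\geq 1$. The only difference is that you additionally spell out the (routine) proof of that equivalence via the finiteness of parameters occurring in a first-order formula, whereas the paper cites it as known.
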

\begin{proof}
The proof is immediate from Theorem \ref{thm: equivalences}, since $\omega$-h-minimality is equivalent to $\ell$-h-minimality for every $\ell\geq 1$.
\end{proof}

A second main result is that $\ell$-h-minimality is preserved under $\RV$-expansions. Let $K$ be a valued field, considered in some language $\cL$ expanding the language of valued fields. Recall from \cite[Sec.\,4]{CHR} that an \emph{$\RV$-expansion} (or \emph{$\RV$-enrichment}) of $\cL$ is an expansion $\cL'$ of the language $\cL$ by arbitrary predicates on cartesian powers of $\RV$. 

\begin{thm}\label{thm:preservation.RV.expansion}
Assume that $\Th_\cL(K)$ is $\ell$-h-minimal for some $\ell\geq 0$. Let $\cL'$ be an $\RV$-expansion of the language. Then $\Th_{\cL'}(K)$ is still $\ell$-h-minimal.
\end{thm}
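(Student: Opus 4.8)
The plan is to reduce everything to the geometric criterion (T$\ell$, D) from Theorem~\ref{thm: equivalences}, so that the preservation statement becomes a matter of checking that the two properties (T$\ell$) and (D) are insensitive to $\RV$-expansions. The cases $\ell=0$ and $\ell=1$ (and $\ell=\omega$) are already covered by \cite[Thm.\,4.1.19]{CHR}, so I would focus on $\ell\ge 1$ an integer and invoke Theorem~\ref{thm: equivalences}: it suffices to show that if $\Th_\cL(K)$ satisfies (T$\ell$, D), then so does $\Th_{\cL'}(K)$ for every $\RV$-expansion $\cL'$.

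First I would record the elementary but crucial point about parameters: an $\cL'$-definable set (with parameters $A' \subset K \cup \RV$) in $K^\ell$ is defined by an $\cL$-formula in which, besides parameters from $K$, there appear finitely many $\RV$-predicates; feeding in a point $x \in K^\ell$, membership is governed by the value of finitely many $\cL$-definable maps $K^\ell \to \RV^{N}$ composed with those predicates. Concretely, given an $\cL'$-definable $f \colon K^\ell \to K$ and $\psi \colon K^\ell \to \RV^{k'}$, there is an $\cL$-definable map $\psi^+ \colon K^\ell \to \RV^{k''}$ (over the $K$-part $A_K$ of the parameters, with the $\RV$-parameters absorbed into the target coordinates as constants) such that both $f$ and $\psi$ factor through the pair (point, $\psi^+$) in the following sense: on each fibre $(\psi^+)^{-1}(\eta)$ the map $f$ agrees with an $\cL(A_K)$-definable function and $\psi$ is an $\cL(A_K)$-definable function of $x$. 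This is the standard "$\RV$ is stably embedded / orthogonal to $\VF$" type observation and is already implicit in the $\ell=0,1$ cases of \cite[Thm.\,4.1.19]{CHR}; here one just needs it uniformly in the fibre.

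Next I would apply property (T$\ell$) of $\Th_\cL(K)$ (which holds by hypothesis via Theorem~\ref{thm: equivalences}) to this enlarged $\cL(A_K)$-definable data: there is an $\cL(A_K)$-definable decomposition $\chi \colon K^\ell \to \RV^k$ such that $\psi^+$ is constant on the twisted boxes of $\chi$, each twisted box is mapped by $f$ to an open ball or singleton, and $\lambda$-twisted boxes scale radii by $\lambda$. Since $\psi^+$ is constant on twisted boxes of $\chi$, so are $\psi$ and the "branch" of $f$ used on that box — hence $\chi$ witnesses (T$\ell$) for the original $\cL'$-definable $f$ and $\psi$; and $\chi$ is $\cL(A_K)$-definable, hence a fortiori $\cL'$-definable, and involves only the $K$-part of the parameters, as required by the formulation of (T$\ell$). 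For property (D), given an $\cL'$-definable $f \colon K \to K$, the same factorization gives an $\cL(A_K)$-definable $\psi^+ \colon K \to \RV^{N}$ and on each fibre $f$ is $\cL(A_K)$-definable; applying (D) for $\Th_\cL(K)$ to each of the (finitely many relevant) branches and using that $\RV$-definable subsets of $K$-nothing — more carefully, that the fibres of $\psi^+$ form an $\cL(A_K)$-definable finite-to-one-after-preparation family — one gets that the set of $y$ with infinite $\cL'$-fibre is a finite union of sets each of which is the analogous set for an $\cL(A_K)$-definable function, hence finite.

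The main obstacle, and the step deserving the most care, is the factorization in the second paragraph: one must make sure that after extracting the $\RV$-predicates, the resulting $\cL$-definable object to which one applies (T$\ell$)/(D) still only uses $K$-parameters from $A$ and finitely many $\RV$-parameters (so that the output decomposition $\chi$ is $A$-definable in the sense demanded, with $A \subset K \cup \RV$), and that "$\psi^+$ constant on twisted boxes" genuinely forces the $\cL'$-membership to be constant on twisted boxes — i.e., that no information about $x$ beyond $\psi^+(x)$ and the $\cL$-definable branch survives. This is exactly where stable embeddedness of $\RV$ (valid in any $0$-h-minimal theory, cf.\ the discussion in \cite[Sec.\,4]{CHR}) is used, and where one should cite or adapt the corresponding lemma from \cite{CHR} rather than reprove it; the rest is bookkeeping.
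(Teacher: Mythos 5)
Your proposal follows the paper's own route: reduce to the criterion (T$\ell$, D) via Theorem~\ref{thm: equivalences}, then show that (T$\ell$, D) survives $\RV$-expansion by factoring $\cL'$-definable functions through $\cL$-definable families of branches parametrized by $\RV$ (this is exactly the paper's Theorem~\ref{thm: RVexpansions}, which rests on \cite[Cor.\,4.1.17, Lem.\,4.3.4]{CHR} rather than an informal ``extract the predicates from the formula'' argument, so your instinct to cite those stable-embeddedness results instead of reproving them is the right one). The only execution difference is that you want a single $\cL$-definable decomposition serving all $\RV$-indexed branches at once --- which requires the family version of (T$\ell$) from Lemma~\ref{lem: ellhmin to Tell, D}, available here since the hypothesis is $\ell$-h-minimality --- whereas the paper obtains a family of decompositions $\chi_\xi$ by compactness and glues them along the $\cL'$-definable branch selector $h$ into one $\cL'$-definable decomposition; both variants are sound, and your handling of (D) via the $\RV$-union finiteness lemma is likewise fine (the paper gets (D) for free from the already-known $\ell=1$ case).
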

\begin{proof}
For $\ell=0,1, \omega$ this is contained in \cite[Thm.\,4.1.19]{CHR}. For $\ell\geq 1$, this follows by combining Theorem \ref{thm: equivalences} and Theorem \ref{thm: RVexpansions}.
\end{proof}

The rest of this section is devoted to a proof of Theorem \ref{thm: equivalences}. 

\subsection{Proof of the equivalence for $\ell=1$}

Throughout this section, recall that $\cT$ is a theory of equicharacteristic zero valued fields in a language $\cL$ expanding the language of valued fields. Let $\ell\geq 1$ be an integer. We first prove the case $\ell=1$ of Theorem \ref{thm: equivalences}. To use results from \cite{CHR} in dimension one, we need the following lemma, relating decompositions with finite sets.

\begin{lem}\label{lem:decomp.dim.one}
Assume that $\cT$ is $0$-h-minimal and let $K$ be a model of $\cT$. Let $C$ be a finite $\emptyset$-definable set in $K$. Then there exists a $\emptyset$-definable decomposition $\chi: K\to \RV^n$ such that every twisted box of $\chi$ is contained in a ball $1$-next to $C$.
\end{lem}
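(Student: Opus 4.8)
The plan is to build the decomposition $\chi\colon K\to\RV^n$ directly from the finite set $C=\{c_1,\dots,c_N\}$, using $0$-h-minimality only to control the fibres over the ``bad'' tuple $0\in\RV^N$. Recall that a ball $1$-next to $C$ is by definition an intersection of balls $c_j+\rv^{-1}(\xi_j)$ with $\xi_j\in\RV^\times$. So the naive candidate is the map $x\mapsto (\rv(x-c_1),\dots,\rv(x-c_N))\in\RV^N$: its fibre over a tuple $(\xi_1,\dots,\xi_N)$ with all $\xi_j\neq 0$ is exactly a ball $1$-next to $C$, which is itself an open ball, hence automatically a twisted box with center $c_1$ (or any $c_j$). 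The only issue is fibres where some coordinate is $0$, i.e.\ the set of $x$ with $x=c_j$ for some $j$; this set is just the finite set $C$ itself, and it is not a union of twisted boxes in the required sense because twisted boxes are non-singleton (they have a value $r$ and are ``genuine'' balls in each coordinate).

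Concretely, I would proceed as follows. First, handle the point masses: for each $c_j\in C$, the set $\{c_j\}$ cannot be a single twisted box of the desired shape (a twisted box in $K^1$ is a set of the form $\{x\mid \rv(x-c)=r\}$ with $r\in\RV^\times$, which is a punctured-ball, never a singleton). The standard fix is to enlarge $C$: since $C$ is finite and $\emptyset$-definable and $\cT$ is $0$-h-minimal, choose (e.g.\ by compactness, or by the basic preparation axiom applied to the finite set $C$) a $\emptyset$-definable finite set $C'\supseteq C$ such that every $c_j\in C$ is strictly inside some ball $1$-next to $C'$ not containing any other element of $C'$; then the decomposition I build from $C'$ will have the property that $\{c_j\}$ is not one of its twisted boxes but lies inside a ball $1$-next to $C'\supseteq C$. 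Actually the cleanest route: let $\chi_0\colon K\to\RV^N$ be $x\mapsto(\rv(x-c_j))_j$, let $D=\chi_0^{-1}(\text{tuples with a zero coordinate})=C$, and then invoke $0$-h-minimality to $1$-prepare the (finite, hence trivially prepared) set $D$ — but more to the point, one refines $\chi_0$ on the fibres over zero-containing tuples. Since those fibres are singletons $\{c_j\}$, we must instead declare them \emph{not} to be twisted boxes: a decomposition's twisted boxes are by definition its \emph{non-empty} fibres, but the definition in the paper requires every non-empty fibre to be a twisted box (necessarily non-singleton). So I would instead take $\chi\colon K\to\RV^{N}$ to be $x\mapsto(\rv(x-c_j))_{j}$ \emph{precomposed with a shift that pushes the $c_j$ off the grid}: pick a $\emptyset$-definable element, or rather note that after possibly adding finitely many points to $C$ we may assume no $c_j$ equals any other relevant center, and use the observation from Lemma~\ref{lem:decomp.dim.one}'s analogue in dimension one that over $0$-h-minimal theories one may always refine a finite set to one that ``prepares itself nicely''. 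I would cite the $0$-h-minimal preparation of the finite $\emptyset$-definable set $C$: there is a finite $\emptyset$-definable $C\subseteq \tilde C$ such that $\tilde C$ is $1$-prepared by $C$ in the strong sense that distinct points of $\tilde C$ lie in distinct balls $1$-next to $C$ and each is the unique point of $\tilde C$ in its ball. Then define $\chi$ by $x\mapsto(\rv(x-c))_{c\in C}$: its non-empty fibres over all-nonzero tuples are balls $1$-next to $C$ (open balls, hence twisted boxes with center any $c\in C$, and this center depends definably — indeed constantly — on $\xi$); its remaining non-empty fibres are the singletons $\{c\}$, $c\in C$, which we eliminate by instead using the decomposition associated to $\tilde C$ in place of $C$, so that the fibre-over-zero singletons $\{c\}$ now sit properly inside larger twisted boxes (balls $1$-next to $C$).

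The verification then has three routine parts: (i) the map $\chi$ is $\emptyset$-definable and each non-empty fibre is a twisted box with a definably (constantly) varying center — immediate from the formula; (ii) each twisted box is an open ball, and an open ball that is an intersection of balls $c+\rv^{-1}(\xi)$, $c\in C$, is precisely a ball $1$-next to $C$ — this is exactly the restatement of preparation recalled just before the lemma; (iii) no twisted box is a singleton — guaranteed by passing to $\tilde C$. The only genuinely non-formal input is this last refinement step, which is where $0$-h-minimality enters: without it one cannot guarantee a $\emptyset$-definable enlargement $\tilde C$ with the separation property. So the main obstacle is precisely arranging, $\emptyset$-definably and using only $0$-h-minimality, that the singleton fibres are absorbed into genuine (non-singleton) twisted boxes; everything else is bookkeeping on the definition of ``ball $1$-next to $C$''. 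I expect the author's proof to do exactly this, likely citing a basic consequence of $0$-h-minimality (preparation of finite sets, or the existence of such refinements) from \cite{CHR} for the enlargement, and then reading off the decomposition from the formula $x\mapsto(\rv(x-c))_{c\in\tilde C}$.
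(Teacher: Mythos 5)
There is a genuine gap, and it sits exactly at the point your proposal treats as automatic. You claim that a non-empty fibre $\bigcap_j\bigl(c_j+\rv^{-1}(\xi_j)\bigr)$ of $x\mapsto(\rv(x-c_j))_j$ is ``automatically a twisted box with center $c_1$ (or any $c_j$)''. That is false: such a fibre equals $c_{j_0}+\rv^{-1}(\xi_{j_0})$ only for an index $j_0$ minimizing $|\xi_{j_0}|$, and it is \emph{not} of the form $\{x\mid \rv(x-c_j)=r\}$ for the other $j$. Worse, a decomposition requires the center to depend $\emptyset$-definably on the $\RV$-tuple, and the individual elements of $C$ are in general not $\emptyset$-definable (think of $C$ a Galois orbit), so one cannot ``pick $c_{j_0}$''; nor can one average the tied minimizers, since when $|c_{j_0}-c_{j_1}|$ equals the radius the average can land inside the ball and then the ball is not a twisted box around it. Producing a definable center is the actual content of the lemma. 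The paper's proof does this via the map $a(x)=\frac{1}{\#C(x)}\sum_{c\in C(x)}c$, the average of the points of $C$ \emph{nearest to $x$}: one checks that $\rv(x'-a(x))=\rv(x-a(x))$ forces $a(x')=a(x)$, so the fibres of $x\mapsto(h(a(x)),\rv(x-a(x)))$ are twisted boxes centered at the finitely many definable points of $\im a$, and each is contained in (in general strictly smaller than) a ball $1$-next to $C$. The finite image of $a$ is injected into $\RV^k$ by \cite[Lem.\,2.5.3]{CHR} --- this is where $0$-h-minimality actually enters, not in the ``separation refinement'' you describe. Your map also presupposes a definable enumeration of $C$, which is the same problem in another guise.

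Your second concern, the singleton fibres, is largely a red herring: in the paper's definition the value $r$ ranges over $\RV_\lambda^n$ including $0$, so singletons \emph{are} twisted boxes, and the paper's own construction produces them (at points of $\im a$). Moreover your proposed fix cannot work as stated: replacing $C$ by $\tilde C\supseteq C$ and using $x\mapsto(\rv(x-c))_{c\in\tilde C}$ still sends each $c\in C$ to a tuple with a zero coordinate, so $\{c\}$ remains a singleton fibre; it is not ``absorbed'' into a larger box. (Indeed no decomposition can put $c\in C$ into a non-singleton twisted box contained in a ball $1$-next to $C$, since such balls exclude $c$; the lemma is harmlessly read as tolerating these value-$0$ fibres.) So the two real repairs your argument needs are: a definable center function on the fibres --- supplied by the nearest-point-average trick --- and the correct localization of where $0$-h-minimality is used.
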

\begin{proof}
This is essentially contained in \cite[Lem.\,2.5.4]{CHR}. For $x\in K$, define $\mu(x) = \min\{|x-c| \mid c\in C\}$, let $C(x) = \{c\in C\mid |x-c| = \mu(x)\}$ and define
\[
a(x) = \frac{1}{\# C(x)} \sum_{c\in C(x)} c.
\]
The map $a: K\to K$ has finite image, so by \cite[Lem.\,2.5.3]{CHR} there exists a $\emptyset$-definable injection $h: \im a\to \RV^k$ (for some integer $k$). Now define
\[
\chi: K\to \RV^{k+1}: x\mapsto (h(a(x)), \rv (x-a(x))).
\]
This is a decomposition with the desired property.
\end{proof}

We first prove that $1$-h-minimal theories satisfy (T1,D).

\begin{lem}\label{lem:1hmin.is.TellD}
Assume that $\cT$ is $1$-h-minimal, then $\cT$ satisfies (T1, D).
\end{lem}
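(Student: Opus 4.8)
The goal is to show that a $1$-h-minimal theory $\cT$ satisfies both (T1) and (D). The plan is to deduce everything from the results on $1$-h-minimality already available in \cite{CHR}, namely the Jacobian property in dimension one and basic dimension theory, together with Lemma~\ref{lem:decomp.dim.one} to translate between ``finite sets'' and ``decompositions''.

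\textbf{Verifying (D).} This is the easier part. Fix an $A$-definable map $f: K\to K$. For $y\in K$ the fibre $f^{-1}(y)$ is an $(A\cup\{y\})$-definable subset of $K$. By $1$-h-minimality (which implies $0$-h-minimality), every $(A\cup\{y\})$-definable subset of $K$ is $1$-prepared by a finite $A\cup\{y\}$-definable set, hence is a finite union of points and balls; in particular, if it is infinite it must contain a ball, so it has dimension $1$. Now apply the dimension theory for $1$-h-minimal theories from \cite[Sec.~5.3]{CHR}: the set $\{y \mid \dim f^{-1}(y) = 1\}$ has dimension zero by the fibre-dimension/additivity results, hence is finite (a dimension-zero definable subset of $K$ is finite, again by $0$-h-minimality). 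One should be slightly careful that these dimension results are available at the level of generality needed, but for definable subsets of $K$ and $K^2$ they are standard consequences of $0$-h-minimality, so this gives (D).

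\textbf{Verifying (T1).} Fix $A$-definable maps $f: K\to K$ and $\psi: K\to \RV^{k'}$. By the Jacobian property in dimension one for $1$-h-minimal theories \cite[Lem.~2.8.5]{CHR} (and the accompanying preparation of $\psi$, via \cite[Thm.~2.9.1 / 5.2.4]{CHR}), there is a finite $A$-definable set $C\subset K$ such that: $\psi$ is constant on every ball $1$-next to $C$; on every ball $B$ that is $1$-next to $C$, $f$ is either constant or, writing $\mu_B\in\Gamma_K$ for the (constant) valuation of the derivative $f'$ on $B$, $f$ maps $B$ bijectively onto an open ball of radius $\mu_B\cdot\radop B$, and more generally $f$ maps each open sub-ball $B'\subseteq B$ onto an open ball of radius $\mu_B\cdot\radop B'$ (this is exactly the content of the valuative Jacobian property, as recalled in the paragraph preceding the theorem). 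Now feed $C$ into Lemma~\ref{lem:decomp.dim.one}: there is an $A$-definable decomposition $\chi: K\to \RV^n$ such that every twisted box of $\chi$ is contained in a ball $1$-next to $C$. (Lemma~\ref{lem:decomp.dim.one} is stated for $\emptyset$-definable $C$, but relativizing all parameters to $A$ is harmless.)

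It remains to check that $\chi$ witnesses (T1). Condition (1) is immediate since each twisted box of $\chi$ lies in a single ball $1$-next to $C$, on which $\psi$ is constant. For condition (2): a twisted box $F$ of $\chi$ in dimension one is, by definition, of the form $c + \rv^{-1}(r)$ for a point $c$ and $r\in\RV^\times$ (or a singleton when $f$ is), i.e.\ itself an open ball contained in some ball $B$ that is $1$-next to $C$; hence $f(F)$ is either a singleton (if $f$ is constant on $B$) or an open ball of radius $\mu_B\cdot\radop F$, using the Jacobian property applied to the sub-ball $F\subseteq B$. For condition (3): if $G$ is a $\lambda$-twisted box of $\chi$ contained in $F$, then $G$ is an open ball with $\radop G = \lambda\cdot\radop F$, so $f(G)$ is a singleton when $f(F)$ is, and otherwise an open ball of radius $\mu_B\cdot\radop G = \mu_B\cdot\lambda\cdot\radop F = \lambda\cdot\radop f(F)$, as required. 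The main obstacle, such as it is, is bookkeeping: making sure the finite set $C$ produced by the Jacobian property simultaneously handles $f$ (including on all sub-balls) and $\psi$, and that passing through Lemma~\ref{lem:decomp.dim.one} does not destroy these properties — but since every twisted box of the resulting $\chi$ is contained in a single ball $1$-next to $C$, all the required properties are inherited directly, and no genuinely new estimate is needed.
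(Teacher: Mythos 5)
Your proof is correct and follows essentially the same route as the paper: for (T1) both arguments combine the Jacobian property for $1$-h-minimal theories with the preparation of $\psi$ and then pass through Lemma~\ref{lem:decomp.dim.one} to convert the finite set $C$ into a decomposition $\chi$. The only divergence is on (D), where the paper simply cites \cite[Lem.\,2.8.2]{CHR} while you re-derive the finiteness of $\{y \mid f^{-1}(y) \text{ infinite}\}$ via dimension theory; both are valid.
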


\begin{proof}
Let $K$ be a model of $\cT$, let $A\subset K\cup \RV$ and let $f: K\to K, \psi: K\to \RV^k$ be $A$-definable. By the Jacobian property \cite[Cor.\,3.2.7]{CHR} there exists a finite $A$-definable set $C$ such that for any ball $B$ $1$-next to $C$, $f(B)$ is either an open ball or a singleton, and for any $\lambda\in \Gamma_K^\times, \lambda\leq 1$, and any open ball $B'$ $\lambda$-next to $C$ contained in $B$, $f(B')$ is either a singleton (if $f(B)$ is) or an open ball of radius $\lambda\radop B$. Using 1-h-minimality, we may enlarge $C$ so that $\psi$ is constant on balls 1-next to $C$. Let $\chi: K\to \RV^k$ be as provided by Lemma \ref{lem:decomp.dim.one} applied to $C$. Then property (T1) holds for $f$ with respect to $\chi$.

The fact that property (D) holds follows directly from~\cite[Lem.\,2.8.2]{CHR}.
\end{proof}

For the other direction, we first prove that a theory satisfying (T1, D) is $0$-h-minimal.

\begin{lem}\label{lem:TellD.is.0hmin}
Assume that $\cT$ satisfies (T1, D). Then $\cT$ is $0$-h-minimal.
\end{lem}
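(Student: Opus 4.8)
The plan is to verify the characterization of $0$-h-minimality from \cite{CHR} directly: namely, that $\cT$ is $0$-h-minimal if and only if in every model $K$, every $(A\cup\RV)$-definable subset $X\subseteq K$ (with $A\subseteq K$) is $1$-prepared by a finite $A$-definable set $C$. So fix a model $K$, a parameter set $A\subseteq K$, a finite tuple $\xi$ from $\RV$, and an $(A\cup\{\xi\})$-definable set $X\subseteq K$. The goal is to produce a finite $A$-definable $C$ preparing $X$, meaning every ball $1$-next to $C$ is contained in or disjoint from $X$.

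First I would reduce to a question about a definable function by introducing the characteristic function $\mathbf 1_X\colon K\to K$, or more conveniently a definable map recording $\xi$ together with membership in $X$; but to stay within the hypotheses of (T1,D) I need an $A$-definable object, so instead I would work with the $A$-definable family $\{X_\zeta\}_\zeta$ of all such sets as $\zeta$ ranges over a suitable $\RV$-power, encoded via an $A$-definable map $\psi\colon K\to\RV^{k'}$ whose fibres refine the partition $\{X,K\setminus X\}$ uniformly in the parameter — concretely, let $\psi(x)$ record, for $x\in K$, the "type" of $x$ over $A\cup\RV$ at the level of the formula defining $X$. The cleanest route: apply (T1) to the pair consisting of $f=\mathrm{id}\colon K\to K$ (or any fixed $A$-definable function) and $\psi\colon K\to\RV^{k'}$ chosen so that $\psi$ is $A$-definable and its fibres are contained in $X$ or in $K\setminus X$ once we substitute the value of $\xi$. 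Since $\xi\in\RV$, the set $X$ is a fibre (or union of fibres) of an $A$-definable map $K\to\RV^{\bullet}$, so such a $\psi$ exists. Then (T1) yields an $A$-definable decomposition $\chi\colon K\to\RV^k$ on whose twisted boxes $\psi$ is constant, hence on whose twisted boxes membership in $X$ is constant. By Lemma~\ref{lem:decomp.dim.one} (applicable since we will have shown, or may assume as an intermediate bootstrapping step, $0$-h-minimality — this is the delicate point, see below) decompositions in dimension one are, up to refinement, the same as balls $1$-next to a finite set, giving the desired $C$.

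The main obstacle is circularity: Lemma~\ref{lem:decomp.dim.one} assumes $0$-h-minimality, which is exactly what we are trying to prove, so I cannot invoke it here. Instead I must extract a finite $A$-definable set $C$ directly from the decomposition $\chi$. Given the center tuple $c\colon K\to K$ of $\chi$ (a single function in dimension one, since $n=1$), the range of $c$ is finite: indeed, twisted boxes of a decomposition partition $K$ into finitely many balls-next-to-a-point, and the centers live in a set which, by property (D) applied to $c$, has finite image after we check $c^{-1}(y)$ is generically a finite union of twisted boxes — more carefully, $c$ takes only finitely many values because there are only finitely many twisted boxes and $c$ is constant on each. Wait — $c$ need not be constant on $K$ globally, only the decomposition structure is global; but the standard argument is that $c(K)$ is finite by \cite[Lem.\,2.5.3]{CHR}-type reasoning combined with (D). So I would take $C=c(K)$, prove it is finite and $A$-definable, and then argue that every ball $1$-next to $C$ is contained in a single twisted box of $\chi$: this is because $\rv(x-c_0)$ being constant for each $c_0\in C$ pins down, in particular, $\rv(x-c(x))$ once we know which $c_0$ equals $c(x)$, and the twisted box is precisely a level set of $\rv(x-c(x))$. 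Since membership in $X$ is constant on twisted boxes of $\chi$, it is constant on balls $1$-next to $C$, giving $1$-preparation and hence $0$-h-minimality.

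Alternatively, and perhaps more robustly, I would follow the proof strategy of \cite[Thm.\,2.9.1]{CHR} verbatim in the direction "(criterion) $\Rightarrow$ ($0$-h-minimal)", which presumably never uses the full strength of the $1$-h-minimal criterion but only its dimension-one content — precisely what (T1,D) packages. The hard part is confirming that property (T1,D), as stated with decompositions rather than finite sets, carries enough information to run that argument without a prior $0$-h-minimality assumption; I expect this to come down to the finiteness-of-centers claim above, which should follow from (D) applied to the center function together with the fact that a decomposition has, by definition, twisted-box fibres whose centers depend definably on the $\RV$-value, forcing the center to factor through an $\RV$-valued map and hence (by \cite[Lem.\,2.5.3]{CHR}, which does not require $0$-h-minimality) through a finite set.
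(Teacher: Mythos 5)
Your first half is essentially the paper's argument: apply (T1) with $\psi$ the characteristic function of $X$ to get a decomposition $\chi:K\to\RV^k$ whose twisted boxes are contained in or disjoint from $X$; observe that the centers form a definable family $(c_\xi)_{\xi\in\RV^k}$, hence a definable partial map $\RV^k\to K$, whose image is finite because property (D) forces any definable map $\RV\to K$ (and, by compactness, $\RV^k\to K$) to have finite image; and check that every ball $1$-next to $C=\bigcup_\xi\{c_\xi\}$ sits inside a single twisted box. Your instinct about the circularity of invoking Lemma~\ref{lem:decomp.dim.one} is correct, and your resolution (extract the centers directly and use (D)) is the one the paper uses. (Two local slips: there are not ``finitely many twisted boxes'' a priori --- there are $\RV^k$-many fibres, and the finiteness of the set of \emph{centers} is exactly what (D) buys you; and \cite[Lem.\,2.5.3]{CHR} is about injecting finite sets into $\RV^k$, which is not the relevant tool here.)

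The genuine gap is that you stop with a set $C$ that is only $(A\cup\RV)$-definable, whereas $0$-h-minimality demands a finite set definable over the $K$-parameters $A\cap K$ alone. Your proposed fix --- choosing $\psi$ to be $A$-definable with $A\subseteq K$ by letting $\psi(x)$ ``record the type of $x$ over $A\cup\RV$'' --- does not produce a map into a cartesian power of $\RV$: the natural invariant is the fibre $W_x\subseteq\RV^m$ of the graph of the defining formula, which is a \emph{subset} of $\RV^m$, and there is no a priori way to code it as an element of some $\RV^{k'}$ without already knowing some tameness. The paper instead accepts the $\RV$-parameters in the first step and then runs a separate bootstrapping argument to remove them: the first step shows that every definable subset of $K$ is $1$-prepared by \emph{some} finite set (no definability required), which is \cite[Ass.\,2.5.1]{CHR}; this yields uniform boundedness as in \cite[Lem.\,2.5.2]{CHR}; that in turn gives, via the argument of \cite[Lem.\,2.9.4]{CHR}, that an $\RV^m$-indexed definable family of finite subsets of $K$ has finite union; and finally one views $C$ as an $(A\cap K)$-definable family $(C'_\zeta)_{\zeta\in\RV^m}$ and takes $C'=\bigcup_\zeta C'_\zeta$, which is finite, $(A\cap K)$-definable, and still $1$-prepares $X$. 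Without some version of this second stage your argument proves a strictly weaker statement than $0$-h-minimality.
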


\begin{proof}
Let $A\subset K\cup \RV$ be a parameter set and let $X\subset K$ be $A$-definable. We want to $1$-prepare $X$ by a finite $(A\cap K)$-definable set. 

We first prove that we can $1$-prepare $X$ by a finite $A$-definable set. For this, note that property (D) implies that any $A$-definable map $f: \RV\to K$ has finite image. Hence, by using compactness we conclude the same for any $A$-definable map $f: \RV^k\to K$. By applying (T1) to the characteristic function of $X$, we obtain an $A$-definable decomposition $\chi: K\to \RV^k$ such that every fibre of $\chi$ is either contained in $X$ or disjoint from $X$. For $\xi\in \RV^k$, let $c_\xi\in K$ be the center of $\chi^{-1}(\xi)$. These form an $A$-definable family $(c_\xi)_{\xi\in \RV^k}$. By the argument above, the union $\cup_\xi \{c_\xi\} = C$ is then a finite $A$-definable set which $1$-prepares $X$. 

The previous paragraph shows that~\cite[Ass.\,2.5.1]{CHR} is satisfied, i.e.\ every $A$-definable subset of $K$ can be $1$-prepared by a finite set (without definability conditions). In particular, uniform boundedness as stated in~\cite[Lem.\,2.5.2]{CHR} holds for $\cT$. By an identical argument as in~\cite[Lem.\,2.9.4]{CHR} (which uses this uniform boundedness), we obtain that if $(C_\xi)_{\xi\in \RV^k}$ is a definable family of finite set $C_{\xi}\subset K$, then the union $\cup_\xi C_{\xi}$ is still finite. Considering the $A$-definable set $C$ obtained above as an $(A\cap K)$-definable family $(C'_{\zeta})_{\zeta\in \RV^m}$, we obtain that the $(A\cap K)$-definable set $C' = \cup_\zeta C'_\zeta$ is finite. But it also $1$-prepares $X$, which concludes the proof.
\end{proof}

\begin{lem}\label{lem:TellD.is.1hmin}
Assume that $\cT$ satisfies (T1, D). Then $\cT$ is $1$-h-minimal.
\end{lem}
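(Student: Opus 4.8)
We already know from Lemma~\ref{lem:TellD.is.0hmin} that $\cT$ is $0$-h-minimal, so all the basic machinery of $0$-h-minimality from \cite{CHR} (cell decomposition in one variable, the Jacobian property for $0$-h-minimal theories, uniform finiteness as in the proof of Lemma~\ref{lem:TellD.is.0hmin}, and Lemma~\ref{lem:decomp.dim.one}) is available. The remaining task is to upgrade to $1$-h-minimality, i.e.\ to prepare an $(A\cup\{\xi_0\})$-definable set $X\subset K$, where $\xi_0\in\RV^\times$ is a single extra $\RV$-parameter, by a finite $A$-definable set $C$ (with $\lambda = 1$; the general $\lambda$-version follows by rescaling, or by noting we are in equicharacteristic zero so $|m|=1$). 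The standard strategy, following \cite[Thm.\,2.9.1]{CHR}, is to run the one-parameter family over $\xi\in\RV^\times$: for each $\xi$, $0$-h-minimality gives a finite $A\cup\{\xi\}$-definable set $C_\xi$ preparing $X_\xi := \{x : (x,\xi)\text{-slice of the defining condition}\}$, and one must show the family $(C_\xi)_\xi$ can be chosen so that $\bigcup_\xi C_\xi$ is finite and $A$-definable. The key point where (T1) beyond (D) is used is exactly to control how the centers vary with $\xi$.

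**Key steps.** First I would reduce to the following concrete statement: given an $A$-definable family $(X_\xi)_{\xi\in\RV^\times}$ of subsets of $K$, there is a finite $A$-definable set $C$ that $1$-prepares every $X_\xi$ simultaneously. Indeed, if $X$ is $(A\cup\{\xi_0\})$-definable then $X = X_{\xi_0}$ for a suitable $A$-definable family, and a common preparing set $C$ is then in particular $A$-definable and prepares $X$. Second, to build such a $C$: apply (T1) to a well-chosen $A$-definable function. The natural choice is to encode the family $(X_\xi)_\xi$ via the map sending $x\in K$ to the data of which $X_\xi$ contain $x$; but since that lands in an $\RV$-power one instead applies the $0$-h-minimal preparation fibrewise to get, for each $\xi$, a finite center set, and then packages the centers into a single $A$-definable function $g\colon \RV^\times \to (\text{finite subsets of }K)$, i.e.\ an $A$-definable family of finite sets. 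Third, invoke the uniform-finiteness statement established inside the proof of Lemma~\ref{lem:TellD.is.0hmin} — namely that under (T1,D), for any definable family $(C_\zeta)_{\zeta\in\RV^k}$ of finite subsets of $K$ the union $\bigcup_\zeta C_\zeta$ is finite — to conclude that $C := \bigcup_{\xi} C_\xi$ is a finite $A$-definable set. By construction $C$ contains, for each $\xi$, a set preparing $X_\xi$; hence $C$ $1$-prepares $X_{\xi_0} = X$. Finally, the mixed-$\lambda$ and the $(A\cap K)$-definability polishing are handled exactly as in the last paragraph of the proof of Lemma~\ref{lem:TellD.is.0hmin}, using uniform boundedness to replace $C$ by an $(A\cap K)$-definable finite set still preparing $X$.

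**The main obstacle.** The delicate point is obtaining, from $0$-h-minimality applied fibrewise, a \emph{definable} choice of preparing sets $C_\xi$ whose union over all $\xi\in\RV^\times$ is finite — this is precisely where one needs property (T1) (the compatible scaling of radii under shrinking balls) and not merely (D). In \cite[Thm.\,2.9.1]{CHR} this is where the argument is most technical: one cannot naively take the $C_\xi$ from an arbitrary $0$-h-minimal preparation, since a priori their union could be infinite; instead one must use (T1) applied to an auxiliary function built from the family to see that finitely many centers, varying definably and with only finitely many values, already suffice. I would model this step on the proof of \cite[Lem.\,2.9.4]{CHR} and the argument already used in Lemma~\ref{lem:TellD.is.0hmin}, where uniform boundedness (a consequence of (T1,D) established there) is exactly what forces such unions of definable families of finite sets to stay finite. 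Once that finiteness is in hand, the rest is bookkeeping.
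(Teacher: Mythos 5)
There is a genuine gap, and it sits exactly at the point you wave away in your first paragraph. You reduce to the case $\lambda=1$, claiming ``the general $\lambda$-version follows by rescaling, or by noting we are in equicharacteristic zero so $|m|=1$''. Equicharacteristic zero only makes the integer factor $|m|$ trivial; it does not let you take $\lambda=1$. The case you then treat --- one extra parameter $\xi_0\in\RV^\times$ and $1$-preparation --- carries no new content: an $(A\cup\{\xi_0\})$-definable set with $\xi_0\in\RV$ is in particular $(A\cup\RV)$-definable, so $0$-h-minimality (Lemma~\ref{lem:TellD.is.0hmin}) already $1$-prepares it by a finite $A$-definable set. The entire content of $1$-h-minimality beyond $0$-h-minimality is the case $\xi_0\in\RV_\lambda$ with $\lambda<1$, where $\RV_\lambda=K^\times/(1+B_{<\lambda}(0))$ is a strictly finer quotient than $\RV$ and the conclusion demands $\lambda$-preparation, which is strictly stronger than $1$-preparation; no rescaling identifies this with the $\lambda=1$ case. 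Relatedly, the finiteness mechanism you invoke is the wrong one: in the genuine case one realizes $X$ inside an $(A\cup\RV)$-definable family $(X_y)_{y\in K}$ with $X_y=X$ for $y\in\rv_\lambda^{-1}(\xi_0)$, so the preparing sets $C_y$ form a family over $K$, not over a power of $\RV$, and $\bigcup_{y\in K}C_y$ has no reason to be finite. The work is done by converting the $C_y$ into definable center functions (Lemma~\ref{lem:finite.set.to.functions}) and applying the radius-scaling of (T1) to those functions, so that $1$-preparing their images on twisted boxes produces a single finite set whose balls $\lambda$-next to it are small enough; this is the architecture of \cite[Thm.\,2.9.1]{CHR}, of Lemma~\ref{lem: Tell,D to hmixmin}, and of the $r>0$ step in Theorem~\ref{thm: coarsening the valuation}, none of which reduces to ``RV-unions of finite sets are finite''.

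You should also be aware that the paper's proof takes a much shorter route and never verifies the definition of $1$-h-minimality directly. It introduces the condition (T1'): for every $A$-definable $f:K\to K$ there is a finite $A$-definable $C$ such that on each ball $B$ $1$-next to $C$ one has $|f(x_1)-f(x_2)|=\mu_B|x_1-x_2|$ for some $\mu_B$. By \cite[Thm.\,2.9.1]{CHR}, (T1') together with (D) is \emph{equivalent} to $1$-h-minimality, so it suffices to derive (T1') from (T1): the centers of the decomposition given by (T1) form a finite $A$-definable set by $0$-h-minimality and finiteness of $\RV$-unions; one enlarges it so that $f$ is constant or injective on balls $1$-next to it; and the scaling clause of (T1), applied to $f$ and to its local inverse, gives the two inequalities making up the equality above. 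If you want to argue directly from the definition instead, you must confront the $\RV_\lambda$-parameter case head-on as sketched above.
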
 
\begin{proof}
Let us say that $\cT$ satisfies property (T1') if the following holds for any model $K$ of $\cT$ and every $A\subset K\cup \RV$.
\begin{enumerate}
\item[(T1')] For any $A$-definable map $f: K\to K$ there exists a finite $A$-definable set $C$ such that for every ball $B$ $1$-next to $C$ there exists a $\mu_B\in \Gamma_K$ such that for all $x_1, x_2\in B$ we have
\[
|f(x_1)-f(x_2)| = \mu_B|x_1-x_2|.
\]
\end{enumerate}
In \cite[Thm.\,2.9.1]{CHR}, it is proved that $\cT$ is 1-h-minimal if and only if it satisfies property (T1') and property (D). (In \cite{CHR} these properties are called (T1) and (T2), but we rename them to avoid notational issues with property (T$\ell$).) Hence it is enough to prove that condition (T1, D) implies condition (T1', D).


Let $f: K\to K$ be $A$-definable for some $A\subset K\cup \RV$. Apply (T1) to obtain an $A$-definable decomposition $\chi: K\to \RV^k$ such that $f$ satisfies (T1) with respect to $\chi$. Let $c_\xi\in K$ be the center of $\chi^{-1}(\xi)$ for $\xi\in \RV^k$. By definition, this is an $A$-definable family $(c_\xi)_\xi$ with a parameter $\xi\in \RV^k$. By 0-h-minimality of $\cT$ and~\cite[Cor.\,2.6.7]{CHR}, the resulting union $C = \cup_\xi \{c_\xi \}$ is a finite $A$-definable set. Via an argument identical to~\cite[Lem.\,2.8.2]{CHR} which uses 0-h-minimality and property (D), we may enlarge $C$ such that $f$ is constant or injective on balls $1$-next to $C$. Let $B$ be a ball $1$-next to $C$. Then $B$ is contained in a twisted box $F$ of $\chi$. Because we are in dimension 1, $B$ is then a $\lambda$-twisted box of $\chi$, for some $\lambda\in \Gamma_K^\times, \lambda\leq 1$. Define $\mu_B$ as $\radop f(B) / \radop B$. By (T1) every open ball $B'$ contained in $B$ is mapped under $f$ to an open ball of radius $\mu_B \radop B'$. For $x, x'\in B$ this yields that
\[
|f(x)-f(x')|\leq \mu_B|x-x'|.
\]
Using injectivity of $f$, the same argument applied to the inverse of $f$ yields that 
\[
|f(x)-f(x')| = \mu_B|x-x'|. \qedhere
\]
\end{proof}

So in what follows we will assume that $\ell\geq 2$. In particular, since each of the statements in Theorem \ref{thm: equivalences} implies the one for $\ell-1$, we already know that $\cT$ is 1-h-minimal. 

\subsection{A useful lemma}

In this section we prove a little lemma allowing us to replace a definable family of finite sets $(C_q)_{q\in Q}$ (over some definable set $Q$) by a definable family of functions $(g_\eta)_{\eta\in \RV^k}$ parametrized over $\RV$ instead. This is frequently useful since preparation results typically produce families of finite sets, which we want to control somehow. Replacing this family of finite sets by a family of functions gives us more flexibility, as we have e.g.\ the Jacobian property~\cite[Lem.\,2.8.5]{CHR} or property (T$\ell$) at our disposal which we can apply to these functions.

\begin{lem}\label{lem:finite.set.to.functions}
Assume that $\cT$ is $0$-h-minimal and let $Q$ be an arbitrary (possibly imaginary) $\emptyset$-definable set. Let $C_q\subset K$ be a $\emptyset$-definable family of finite sets, for $q\in Q$. Then there exists a $\emptyset$-definable family of sets $Y_\eta\subset Q$, parametrized by $\eta\in \RV^k$, together with a $\emptyset$-definable family of functions $g_\eta: Y_\eta\to K$ such that
\[
\bigcup_{q\in Q} \{q\}\times E_q = \bigsqcup_{\eta\in \RV^k}  \operatorname{graph} g_\eta \subset Q\times K.
\]
\end{lem}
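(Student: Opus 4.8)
The plan is to construct the $\RV^k$-indexed family by "slicing" the given family $(C_q)_{q\in Q}$ according to the size of the finite fibers and then re-indexing. First I would apply $0$-h-minimality to the definable set $E = \bigcup_{q\in Q}\{q\}\times C_q \subset Q\times K$, viewed fiberwise over $Q$: after working in a suitable model and using compactness, there is a uniform bound $N$ on $\#C_q$ (this is the uniform boundedness consequence of $0$-h-minimality, cf.\ \cite[Lem.\,2.5.2]{CHR}, applied over the parameter set $Q$; here $Q$ may be imaginary, so one works in $\cL\eq$ or equivalently treats $Q$ as a definable set of tuples). Partition $Q$ into the $\emptyset$-definable pieces $Q_j = \{q\in Q\mid \#C_q = j\}$ for $j=0,1,\dots,N$; on $Q_j$ the fiber $C_q$ has exactly $j$ elements, so using a definable ordering trick (e.g.\ the one underlying \cite[Lem.\,2.5.3]{CHR}, or simply working with the $j$ "coordinate functions" $c_1(q),\dots,c_j(q)$ that enumerate $C_q$ in some canonical definable way) we get, on each $Q_j$, finitely many definable functions $c_{j,1},\dots,c_{j,j}\colon Q_j\to K$ whose graphs partition $E\cap(Q_j\times K)$.

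The second step is to convert this finite list of functions into a single $\RV^k$-parametrized family. Each individual graph $\operatorname{graph} c_{j,t}$ (for $j\in\{0,\dots,N\}$, $t\in\{1,\dots,j\}$) is a $\emptyset$-definable subset of $Q\times K$, and there are only finitely many of them; so it suffices to produce, for each such fixed $(j,t)$, a $\emptyset$-definable family $(Y_\eta,g_\eta)_{\eta\in\RV^{k_{j,t}}}$ whose graphs partition $\operatorname{graph} c_{j,t}$, and then take the disjoint union over the finitely many $(j,t)$ (absorbing the finite index set into one extra coordinate of $\RV$, since $\RV$ is infinite and $\emptyset$-definable points of $\RV$ exist). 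For a single function $c=c_{j,t}\colon Q_j\to K$, set $Y_\eta = Q_j$ for one distinguished $\eta$ and $Y_\eta=\emptyset$ otherwise, with $g_\eta = c$; this already works trivially in the degenerate case. So the content is really only the re-indexing bookkeeping: the "hard" existence input — the uniform bound on $\#C_q$ — comes for free from $0$-h-minimality.

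Let me restructure to make the nontrivial point explicit: the one place where $0$-h-minimality is genuinely used is to guarantee that the family of \emph{finite} sets $(C_q)_q$ has \emph{bounded} cardinality, uniformly in $q$, even though $Q$ is an arbitrary (possibly imaginary) definable set. Without such a bound one could not stratify $Q$ into finitely many pieces $Q_j$, and the finite disjoint union over $(j,t)$ would fail. I expect this uniform-boundedness step to be the main obstacle to write cleanly, because $Q$ is allowed to be imaginary: one must either invoke the version of uniform boundedness that holds after passing to $\cL\eq$, or reduce to the real case by noting that any imaginary $Q$ is a definable quotient of a definable subset of some $K^m\times\RV^{m'}$ and pulling the family $(C_q)$ back along the quotient map — the bound on fiber size is unchanged under such a pullback, and a family over $\RV^k$ descends along a quotient with $\RV$-fibers without difficulty. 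Once the bound $N$ is in hand, everything else is the routine finite reorganization described above, using that $\emptyset$-definable elements of $\RV$ exist to index a finite set and that disjointness of graphs is automatic since distinct $c_{j,t}$ on $Q_j$ have disjoint graphs by construction.
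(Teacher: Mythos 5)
There is a genuine gap, and it sits exactly where you wave your hands: the claim that on each piece $Q_j=\{q\mid \#C_q=j\}$ one can find finitely many $\emptyset$-definable functions $c_{j,1},\dots,c_{j,j}\colon Q_j\to K$ that ``enumerate $C_q$ in some canonical definable way.'' No such canonical enumeration exists in general. A valued field has no definable linear order, and selecting one element out of each finite fibre $C_q$ by a definable function is precisely the property of having algebraic Skolem functions ($\acl=\dcl$), which this lemma does \emph{not} assume (the paper only arranges $\acl=\dcl$ later, via an $\RV$-expansion, in Corollary \ref{cor: acl=dcl by RVexpansion}). A concrete obstruction: $Q=K$ and $C_q$ the set of square roots of $q$; there is no $\emptyset$-definable section $q\mapsto\sqrt{q}$. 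If your enumeration step were available, the conclusion of the lemma could be strengthened to a partition of $\bigcup_q\{q\}\times C_q$ into \emph{finitely many} definable graphs, which is false --- and indeed the fact that the statement is forced to use an $\RV^k$-indexed family rather than a finite one should have been the warning sign. Your parenthetical appeal to \cite[Lem.\,2.5.3]{CHR} does not repair this: that lemma produces a definable family of \emph{injections} $h_q\colon C_q\to\RV^k$, not definable choice functions $Q_j\to K$.

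The paper's actual proof is a one-step re-indexing built directly on those injections: take the family $h_q\colon C_q\to\RV^k$ from \cite[Lem.\,2.5.3]{CHR}, set $Y_\eta=\{q\in Q\mid \eta\in\im h_q\}$ and $g_\eta(q)=h_q^{-1}(\eta)$; the graphs of the $g_\eta$ then partition $\bigcup_q\{q\}\times C_q$ by injectivity of each $h_q$. Note also that your framing inverts where the content lies: the uniform bound on $\#C_q$, which you single out as the ``main obstacle,'' is not needed at all in the paper's argument, whereas the genuinely nontrivial input is the existence of the definable injections into $\RV^k$ --- that is the mechanism that substitutes for the impossible definable choice, and it is the sole reason the output family is parametrized by $\RV^k$.
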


Since $0$-h-minimality is preserved under adding constants from $K\cup \RV$, this lemma also holds if everything is $A$-definable instead, for some $A\subset K\cup \RV$.

\begin{proof}
Using~\cite[Lem.\,2.5.3]{CHR} there exists a $\emptyset$-definable family of injections
\[
h_q: E_q\to \RV^k,
\]
for some fixed integer $k$. For $\eta\in \RV^k$, we define
\begin{align*}
Y_\eta &= \{q\in Q \mid \eta\in \im h_q\} \\
g_\eta &: Y_\eta \to K: q\mapsto h^{-1}_q(\eta).
\end{align*}
Then we indeed have that
\[
\bigcup_{q\in Q} \{q\}\times E_q = \bigsqcup_{\eta\in \RV^k}  \operatorname{graph} g_\eta. \qedhere
\]
\end{proof}

\subsection{From $\ell$-h-minimality to (T$\ell$, D)}

We prove the direction from $\ell$-h-minimality to property (T$\ell$, D), for $\ell\geq 2$. For later use, we prove a more general result about obtaining property (T$\ell$) for finitely many functions at once and, even more generally, for definable families over $\RV$.

\begin{lem}\label{lem: ellhmin to Tell, D}
Assume that $\cT$ is $\ell$-h-minimal and let $K$ be a model of $\cT$. Let $(f_\theta)_\theta$ be a $\emptyset$-definable family of functions $f_\theta: K^\ell\to K$, with $\theta\in \RV^n$. Then there exists a $\emptyset$-definable decomposition $\chi$ of $K^\ell$ such that for every $\theta\in \RV^n$, $f_\theta$ satisfies (T$\ell$) with respect to $\chi$.
In particular, the theory $\cT$ satisfies (T$\ell$, D).
\end{lem}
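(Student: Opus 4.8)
The plan is to prove the statement by induction on $\ell$, with the base case $\ell=1$ already available (it is essentially Lemma \ref{lem:1hmin.is.TellD}, combined with Lemma \ref{lem:finite.set.to.functions} to convert the finite set $C$ into an $\RV$-parametrized family of centers). For the inductive step, fix the $\emptyset$-definable family $(f_\theta)_{\theta}$ with $f_\theta : K^\ell \to K$. The idea is to separate the last coordinate from the first $\ell-1$. First I would view, for each fixed $x_{<\ell} = (x_1,\dots,x_{\ell-1})$ and each $\theta$, the unary function $y \mapsto f_\theta(x_{<\ell}, y)$ together with the family of ``critical'' objects attached to it. Applying $1$-h-minimality in the last coordinate (in the form of the Jacobian property, or of property (T1) via the already-established case $\ell=1$, relativized over the parameters $x_{<\ell}$ and $\theta$), one obtains a $\emptyset$-definable family of finite sets $C_{x_{<\ell},\theta} \subset K$ preparing $y \mapsto f_\theta(x_{<\ell},y)$ in the sense of (T1). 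Using Lemma \ref{lem:finite.set.to.functions}, replace this family of finite sets by a $\emptyset$-definable family of functions $c_{\ell,\eta} : Y_\eta \to K$ parametrized by $\eta \in \RV^{k_\ell}$, where $Y_\eta \subset K^{\ell-1}$; these are the candidate last centers of the decomposition.

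Next I would descend to dimension $\ell-1$. The functions $c_{\ell,\eta}$ are a $\emptyset$-definable family of maps $K^{\ell-1} \to K$ indexed by $\eta \in \RV^{k_\ell}$, and similarly one can collect all the auxiliary data depending on $x_{<\ell}$ (the "radius-scaling" information from the Jacobian property in the last variable, the function $\psi$ — although $\psi$ is not in the statement of this lemma, only property (T$\ell$), so one just needs the $f_\theta$ and the $c_{\ell,\eta}$). Apply the inductive hypothesis to the combined $\emptyset$-definable family consisting of all the $c_{\ell,\eta}$ (and of the maps recording how $\radop$ of the image ball in the last variable depends on $x_{<\ell}$), obtaining a $\emptyset$-definable decomposition $\chi' : K^{\ell-1} \to \RV^{k'}$ of $K^{\ell-1}$ on whose twisted boxes all of these behave as in (T$(\ell-1)$) — in particular each $c_{\ell,\eta}$ is "nice" (maps twisted boxes to balls, scales radii compatibly) on the twisted boxes of $\chi'$. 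Then build $\chi : K^\ell \to \RV^{k'+k_\ell+1}$ by $\chi(x) = (\chi'(x_{<\ell}),\ \eta(x_{<\ell}),\ \rv(x_\ell - c_{\ell,\eta}(x_{<\ell})))$, where $\eta(x_{<\ell})$ is the index such that $x_{<\ell} \in Y_{\eta}$ and $c_{\ell,\eta}$ is the center active over $x_{<\ell}$. One checks $\chi$ is a decomposition of $K^\ell$: its fibers are twisted boxes with center tuple $(c_1,\dots,c_{\ell-1},c_{\ell,\eta})$ read off from $\chi'$ and the last coordinate, and this center tuple depends definably on the $\RV$-value.

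Finally I would verify that each $f_\theta$ satisfies (T$\ell$) with respect to $\chi$. Fix a twisted box $F$ of $\chi$ lying over a twisted box $F'$ of $\chi'$. For each $x_{<\ell} \in F'$ the slice $F_{x_{<\ell}} = \{y : (x_{<\ell},y) \in F\}$ is a single ball $1$-next to $C_{x_{<\ell},\theta}$, so by the choice of $C$ the image $f_\theta(\{x_{<\ell}\}\times F_{x_{<\ell}})$ is a ball (or singleton) whose radius is governed by the radius of $F_{x_{<\ell}}$ and an auxiliary function of $x_{<\ell}$ that is constant on $F'$ by construction; varying $x_{<\ell}$ across $F'$ and using that the first $\ell-1$ centers and $c_{\ell,\eta}$ behave well on $F'$, one sees $f_\theta(F)$ is a ball (or singleton), and similarly for a $\lambda$-twisted box $G \subset F$ one gets $f_\theta(G)$ is a singleton or a ball of radius $\lambda \radop f_\theta(F)$. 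The "in particular" clause (T$\ell$, D) follows: (T$\ell$) is the case of a single function (a one-element family), and (D) is immediate from $\ell$-h-minimality, being the $\ell=1$ instance (see \cite[Lem.\,2.8.2]{CHR}). The main obstacle I anticipate is the bookkeeping in gluing the slice-wise balls over $F'$ into a single ball for $f_\theta(F)$: one must argue that, because all the centers $c_i$ and the radius-data are compatible across $F'$ (which is exactly what (T$(\ell-1)$) for the enlarged family buys us), the union over $x_{<\ell} \in F'$ of the image balls is again a single ball of the predicted radius, rather than merely a finite union of balls; getting the uniformity and the exact radius formula $\lambda \radop f_\theta(F)$ to come out correctly is the delicate point, and is where one genuinely uses that $\chi'$ was chosen to handle the radius-scaling functions and not just the centers.
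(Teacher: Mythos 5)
Your overall architecture (induction on $\ell$, splitting off one coordinate, converting the family of preparing finite sets into $\RV$-parametrized center functions via Lemma \ref{lem:finite.set.to.functions}) matches the spirit of the paper's proof, but there is a genuine gap at exactly the point you yourself flag as ``the delicate point'', and it is not mere bookkeeping. Knowing that the centers $c_{\ell,\eta}$ and the radius-scaling data satisfy (T$(\ell-1)$) on the twisted boxes of $\chi'$ controls only the \emph{radii} of the slice images $f_\theta(\{x_{<\ell}\}\times F_{x_{<\ell}})$; it says nothing about where these balls sit relative to one another as $x_{<\ell}$ varies over $F'$, so you cannot conclude that their union is a single ball. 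To glue the slices you must additionally (i) prepare the \emph{images} $f_\theta(\{x_{<\ell}\}\times F_{x_{<\ell}})$ by a definable family of finite sets, convert those into image-center functions $g_\eta$ via Lemma \ref{lem:finite.set.to.functions}, and arrange that each $g_\eta$ maps the relevant ($\lambda$-)twisted boxes of the base to balls compatibly (the Jacobian property), and (ii) show that the offset of each slice image from $g_\eta(x_{<\ell})$ --- an element of $\RV_\lambda$ --- is constant on that twisted box. Only then is $f_\theta(F)$ the sum of two balls, hence a ball, with the correct radius for $\lambda$-twisted sub-boxes. Step (ii) is where the full strength of $\ell$-h-minimality enters: one must prepare a definable set involving $\ell$ parameters from $\RV_\lambda$ (the value of the $\lambda$-twisted box in the domain together with the offset in the image); this is precisely the preparation of the set $G_4$ in the paper's proof and, in your base-times-fiber arrangement, would require a higher-dimensional uniform preparation in the spirit of \Cref{prop:uniform} with a total of $\ell$ many $\RV_\lambda$-parameters.

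The absence of this step is detectable from the hypotheses you actually use: your argument invokes only $1$-h-minimality (in the last coordinate) and the inductive hypothesis, i.e.\ $(\ell-1)$-h-minimality, and never the full $\ell$-h-minimality assumption. If the proof were complete as written, induction would show that $1$-h-minimality implies (T$\ell$, D) for every $\ell$, hence by Lemma \ref{lem: Tell,D to hmixmin} that $1$-h-minimality implies $\omega$-h-minimality --- which is one of the open questions recorded in Section \ref{sec:further.questions}. So the missing invocation of $\ell$-h-minimality is not optional, and the image-center bookkeeping is the essential content of the proof rather than a detail to be deferred.
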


The notation $\RV_\bullet$ will be used to denote the disjoint union $\cup_{\lambda\leq 1} \RV_\lambda$, see \cite[Sec.\,2.6]{CHR}.

The proof is rather technical, so we give some intuition first. The idea is to induct on the dimension $\ell$. So for simplicity, assume that $\ell=2$ and that we just want to obtain property (T$\ell$) for one function $f: K^2\to K$. For $x\in K$ let $f_x: K\to K: y\to f(x,y)$. By induction, we can find for every $x$ a decomposition $\omega_x$ of $K$ such that $f_x$ satisfies property (T1) with respect to $\omega_x$. For every $x\in K$, we can now find a finite set $E_x \subset K$ preparing the image of $f_x$, i.e. $E_x$ 1-prepares the image of every twisted box of $\omega_x$ under $f_x$. For simplicity let's assume that $E_x$ is a singleton $e(x)$ and that the map $e: K\to K: x\mapsto e(x)$ is definable. By preparing several objects we will obtain a decomposition $\omega$ of $K^2$ such that the following holds for every twisted box $F$ of this decomposition. Let $B = \pi_1(F)$, which is an open ball.
\begin{enumerate}
\item The function $e$ satisfies the Jacobian property on $B$.
\item There exists a set $G\subset \RV$ (depending only on $F$) such that for every $x\in B$ 
\[
f (F\cap (\{x\}\times K)) = e(x) + \bigcup_{\xi\in G} \rv^{-1}(\xi).
\]
In other words, we have a good description of the image of $f$ on every vertical slice of $F$.
\item The set $\bigcup_{\xi\in G} \rv^{-1}(\xi)$ is an open ball (or a singleton).
\end{enumerate}
Now we have that 
\[
f(F) = \bigcup_x e(x) + \bigcup_{\xi\in G} \rv^{-1}(\xi) = e(B) + \bigcup_{\xi\in G} \rv^{-1}(\xi),
\]
which is the sum of two open balls, so it is again an open ball. (Or if both are singletons, then so is $f(F)$.) To have the correct scaling for $\lambda$-twisted boxes contained in $F$, we use the same argument but keeping track of $\RV_\lambda$-parameters as well.

\begin{proof} We want to find a $\emptyset$-definable decomposition $\chi: K^\ell\to \RV^m$ such that for every $\theta\in \RV^n$ we have that
\begin{enumerate}
\item for every twisted box $F$ of $\chi$, $f_\theta(F)$ is a singleton or an open ball, and
\item for every $\lambda\in \Gamma_K^\times, \lambda\leq 1$ and every $\lambda$-twisted box $G$ contained in a twisted box $F$ of $\chi$, $f_\theta(G)$ is either a singleton (if $f_\theta(F)$ is), or an open ball of radius $\lambda\radop f_\theta(F)$.
\end{enumerate}

We induct on $\ell$, the base case being $\ell=1$. In that case, use \cite[Lem.\,2.8.5]{CHR} to obtain finite $\emptyset$-definable sets $C_\theta$ for every $\theta\in \RV^n$ such that $f_\theta$ satisfies the valuative Jacobian property on balls 1-next to $C_\theta$. By compactness, we can assume that $(C_\theta)_\theta$ is a $\emptyset$-definable family. Since $\RV$-unions stay finite \cite[Cor.\,2.6.7]{CHR}, the union $C = \cup_\theta C_\theta$ is a finite $\emptyset$-definable set. Let $\chi$ be the $\emptyset$-definable decomposition from Lemma \ref{lem:decomp.dim.one} applied to $C$. Then every $f_\theta$ satisfies property (T1) with respect to $\chi$. This proves the case $\ell=1$.

So let $\ell>1$. For $x\in K, \theta\in \RV^n$ write $f_{\theta, x}: K^{\ell-1}\to K: y\mapsto f_\theta(x,y)$. Using induction for every $x\in K$, together with compactness, we take a $\emptyset$-definable family of decompositions $\omega_x: K^{\ell-1}\to \RV^p$ such that the family $(f_{\theta, x})_\theta$ satisfies property (T$\ell-1$) with respect to $\omega_x$. For $\eta\in \RV^p$, let $c_{x, \eta}$ be the center tuple of the twisted box $\omega_x^{-1}(\eta)$, which is also a $\emptyset$-definable family. Use 0-h-minimality \cite[Prop.\,2.6.2]{CHR} to find a $\emptyset$-definable family of finite sets $(E_x)_x$, $E_x\subset K$ such that $E_x$ $1$-prepares $f_{\theta, x}(\omega_x^{-1}(\eta))$ for every $\theta\in \RV^n$ and every $\eta\in \RV^p$. We apply Lemma~\ref{lem:finite.set.to.functions} to replace these finite sets by a family of functions. In more detail, we obtain an $\emptyset$-definable family $(Y_\eta)_{\eta\in \RV^k}$ of subsets $Y_\eta\subset K$, together with a $\emptyset$-definable family of maps $g_\eta: Y_\eta\to K$ such that
\[
\bigcup_{x\in K}\{x\}\times E_x = \bigsqcup_{\eta\in \RV^k} \operatorname{graph} g_\eta.
\]

We now use \cite[Prop.\,2.6.2, Lem.\,2.8.5]{CHR} to find a finite $\emptyset$-definable set $D$ with the following properties.
\begin{enumerate}
\item The set $D$ $1$-prepares all sets $Y_\eta$ for $\eta\in \RV^k$. This is possible by 0-h-minimality.

\item The maps $g_\eta$ for $\eta\in \RV^k$ satisfy the Jacobian property \cite[Lem.\,2.8.5]{CHR} on balls $1$-next to $D$. Note in particular that $g_\eta$ maps open balls disjoint from $D$ to open balls, and preserves their radii in the desired way. This is possible using 1-h-minimality.

\item The set $D$ uniformly $1$-prepares the $\emptyset$-definable set
\[
G_2 = \{ (x, \eta', \zeta) \in K\times \RV^p\times \RV^{\ell-1}\mid \omega_x^{-1}(\eta') = \Rtimes(c_{x, \eta'}, \zeta) \}.
\]
Recall that $\Rtimes(c_{x, \eta'}, \zeta)$ is the twisted box with center $c_{x, \eta'}$ and value $\zeta$. We prepare this set so that we can easily describe the twisted boxes in our final decomposition. This can be done using 0-h-minimality.

\item We also ensure that $D$ uniformly 1-prepares the set
\begin{align*}
G_3 = \{(x, \theta, \eta', \zeta, \eta)\in  K &\times \RV^n \times \RV^p\times \RV^{\ell-1}\times \RV^k\mid \\ x\in Y_\eta, \omega_x^{-1}(\eta') = \Rtimes(c_{x, \eta'}, \zeta), 
&\text{ and } f_{\theta, x}(\omega_x^{-1}(\eta')) \text{ is 1-prepared by }g_\eta(x)\}.
\end{align*}
We do this so that we can easily describe what the image of our twisted boxes under $f$ will be, which in turn will allow us to prove that these are open balls (or singletons). Again, this can be done using 0-h-minimality, where we use that preparing is a definable condition \cite[Lem.\,2.4.2]{CHR}.

\item Finally, we make sure that $D$ uniformly prepares the set 
\begin{align*}
G_4\subset K\times \RV^n\times \RV^p\times \RV^k\times \RV^{\ell-1}_\bullet\times \RV_\bullet
\end{align*}
consisting of those $(x, \theta, \eta', \eta, \zeta, \xi)$ in $K\times \RV^n\times \RV^p\times \RV^k\times \RV^{\ell-1}_\lambda\times \RV_\lambda$ such that $x\in Y_\eta$ and such that 
\[
g_\eta(x)+\rv^{-1}_\lambda(\xi) \subset f_{\theta, x}(\Rtimes_\lambda( c_{x, \eta'}, \zeta)).
\]
For fixed $(x, \theta, \eta', \eta, \zeta)$ we use the notation $G_4(x, \theta, \eta', \eta, \zeta)$ to mean the set of $\xi\in \RV_\lambda$ for which $(x, \theta, \eta', \eta, \zeta, \xi)$ is in $G_4$. This is the key preparation required to ensure that $f$ will map $\lambda$-twisted boxes in our final decomposition to open balls of the correct radius. For this we need $\ell$-h-minimality. 
\end{enumerate}

Use Lemma \ref{lem:decomp.dim.one} to obtain a $\emptyset$-definable decomposition $\chi_0: K\to \RV^{k'}$ such that every twisted box of $\chi_0$ is contained in a ball $1$-next to $D$. Define the decomposition
\[
\chi: K^\ell\to \RV^{k'}\times \RV^p: (x,y)\mapsto (\chi_0(x), \omega_x(y)).
\]
We claim that this is the desired decomposition.

Let $F$ be a twisted box of this decomposition and fix $\theta\in \RV^n$. Then $\pi_1(F)$ is an open ball $B$ which is contained in a ball $1$-next to $D$. Thus $F$ is of the form
\[
F = \bigcup_{x\in B} \{x\}\times \omega_x^{-1}(\eta'),
\]
for some fixed $\eta'\in \RV^p$. By preparation of $G_2$, there is a $\zeta\in \RV^{\ell-1}$ depending only on $F$ such that $\omega_x^{-1}(\eta')$ is precisely $\Rtimes(c_{x, \eta'}, \zeta)$ for any $x\in B$. By preparation of $G_3$ there exists an $\eta\in \RV^k$ depending only on $F$ and $\theta$ such that $f_{\theta,x} (\Rtimes(c_{x, \eta'}, \zeta))$ is 1-prepared by $g_\eta(x)$, for every $x\in B$. Hence we have that
\begin{align*}
f_\theta(F) &= f_\theta\left(\bigcup_{x\in B}\{x\}\times \Rtimes(c_{x, \eta'}, \zeta)\right) \\
	&= \bigcup_{x\in B} f_{\theta, x} (\Rtimes(c_{x, \eta'}, \zeta)) \\
	&= \bigcup_{x\in B} \bigcup_{\xi\in G_4(x, \theta, \eta', \eta, \zeta)} g_{\eta}(x)+\rv^{-1}(\xi) \\
	&= \bigcup_{x\in B} \left( g_\eta(x) + \bigcup_{\xi\in G_4(x, \eta', \eta, \zeta)} \rv^{-1}(\xi)\right).
\end{align*}
Now the set $\bigcup_{\xi\in G_4(x, \theta, \eta', \eta, \zeta)} \rv^{-1}(\xi)$ is independent of $x$, as $x$ runs over $B$ by preparation of $G_4$. Moreover, by definition of $G_4$ it is a translate of $f_{\theta, x} (B_x)$ for some twisted box $B_x$ of the decomposition $\omega_x$ (for some $x\in B$). Therefore, this set is an open ball (or a singleton), by induction. But also $g_{\eta}(B)$ is an open ball or a singleton, by the Jacobian property for $g_{\eta}$. If $x\in B$, then we conclude that 
\[
f_\theta(F) = g_{\eta}(B) + \bigcup_{\xi\in G_4(x, \theta, \eta', \eta, \zeta)} \rv^{-1}(\xi)
\]
is an open ball or a singleton, being the sum of two open balls or singletons.

For the final statement, if $F'$ is a $\lambda$-twisted box inside $F$ then the above reasoning goes through without change to show that
\[
f_\theta(F') = g_{\eta}(B') + \bigcup_{\xi'\in G_4(x,\theta, \eta', \eta, \zeta')} \rv^{-1}_\lambda(\xi')
\]
Here $B'$ is a ball contained in a ball $\lambda$-next to $D$ which is a subset of $B$ with $\radop B' = \lambda\radop B$, while $\zeta'$ and $\xi'$ are in $\RV_\lambda$. By the Jacobian property for $g_{\eta}$, the first set is a singleton or an open ball of radius $\lambda \radop g_{\eta}(B)$. Similarly, the second set 
\[
\bigcup_{\xi'\in G_4(x, \theta, \eta', \eta, \zeta')} \rv^{-1}_{\lambda}(\xi')
\]
is a translate of $f_{\theta, x}(B'_x)$ for some $\lambda$-twisted box $B'_x$ of the decomposition $\omega_x$ and contained in the twisted box $B_x$ as constructed above. Hence, this set is a singleton or an open ball of radius 
\[
\lambda\radop \left(\bigcup_{\xi\in G(x,\theta, \eta', \eta, \zeta)} \rv^{-1}(\xi)\right).
\]
This yields the desired conclusion.

To conclude property (T$\ell$, D) from this, note that a $\emptyset$-definable function $\psi: K^\ell\to \RV^p$ can be represented as a family of functions
\[
\psi_\zeta: K^\ell\to K: x\mapsto \begin{cases} 1 & \text{ if } \psi(x) = \zeta, \\
	0 & \text{ else,} \end{cases}
\]
for $\zeta\in \RV^p$. If $\chi$ is a decomposition such that every $\psi_\zeta$ satisfies (T$\ell$) with respect to $\chi$ then $\psi$ is constant on the twisted boxes of $\chi$.
\end{proof}

\subsection{Preservation under $\RV$-expansion}\label{sec: preservation under RV expansions}

Recall that the theory $\cT$ has \emph{algebraic Skolem functions} if for every model $K$ of $\cT$ and every $A\subset K$, we have $\acl_K(A) = \dcl_K(A)$. Equivalently, if for every model $K$ of $\cT$, all integers $n\geq 0, m>0$ and every $\emptyset$-definable $X\subset K^{n+1}$ for which the projection $\pi_{\leq n}$ has fibres of size exactly $m$ on $X$, there exists a $\emptyset$-definable function $f: K^n\to K$ whose graph is contained in $X$. We will abbreviate this condition by saying that $\acl=\dcl$ (in $\cT$).

To prove the other implication in Theorem \ref{thm: equivalences}, we pass through a theory where we have $\acl=\dcl$. To do so, we must prove that (T$\ell$, D) is preserved by $\RV$-expansions. Let $\cT$ be a theory satisfying (T$\ell$, D) in a language $\cL$ expanding the language of valued fields. Let $K$ be a model of $\cT$. Recall that an \emph{$\RV$-expansion} of $\cL$ is an expansion $\cL'\supset \cL$ by predicates on cartesian powers of $\RV_K$.

\begin{thm}\label{thm: RVexpansions}
Let $\ell\geq 1$ be an integer and let $\Th_\cL(K)$ satisfy property (T$\ell$, D) for some language $\cL$ expanding the language of valued fields. Let $\cL'$ be an $\RV$-expansion of $\cL$. Then $\Th_{\cL'}(K)$ still has property (T$\ell$, D).
\end{thm}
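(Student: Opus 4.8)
The plan is to reduce everything to the language $\cL$ by exploiting the defining feature of an $\RV$-expansion: a new predicate lives entirely on cartesian powers of $\RV_K$. Concretely, any $\cL'$-formula $\varphi(x)$ with $x$ ranging over (a power of) $K$ together with some parameters from $K \cup \RV$ should, after replacing the new $\RV$-predicates by auxiliary variables ranging over $\RV$, become an $\cL$-formula in which the extra $\RV$-arguments appear only through the $\rv$-images. This is the standard ``$\RV$-elimination'' observation already used in \cite[Sec.\,4]{CHR}: one writes an $\cL'$-definable set $X \subset K^\ell$ as $X = \{x \mid (x, \sigma) \in X_0\}$ for a single tuple $\sigma \in \RV_K^N$ and an $\cL$-definable $X_0 \subset K^\ell \times \RV^N$, i.e.\ $X = (X_0)_\sigma$ is a fibre of an $\cL$-definable family parametrized by $\RV^N$. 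Similarly an $\cL'$-definable map $f : K^\ell \to K$ is a fibre $f = (f_\sigma)$ of an $\cL$-definable family $(f_\eta)_{\eta \in \RV^N}$.

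With this reduction in hand, property (D) for $\cT' = \Th_{\cL'}(K)$ follows immediately: given an $\cL'$-definable $f : K \to K$, write it as a fibre $f_\sigma$ of an $\cL$-definable family $(f_\eta)_{\eta \in \RV^N}$; applying (D) for $\cT$ in the model $K$ (with $\eta$ now a genuine $\RV$-parameter, which is harmless since (D) in (T$\ell$, D) is stated for all $A \subset K \cup \RV$) to $f_\sigma$ directly gives that $\{y \mid f_\sigma^{-1}(y)$ infinite$\}$ is finite. For (T$\ell$), given an $\cL'$-definable pair $f : K^\ell \to K$, $\psi : K^\ell \to \RV^{k'}$, realize both as fibres $f_\sigma$, $\psi_\sigma$ of $\cL$-definable families over $\RV^N$. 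Here is where I would invoke the ``families'' strengthening of (T$\ell$) rather than the bare statement: the natural tool is the version producing a single decomposition $\chi$ that works for a whole $\RV$-parametrized family, exactly as in Lemma~\ref{lem: ellhmin to Tell, D}. Since we only know (T$\ell$, D) and not $\ell$-h-minimality a priori, I would instead note that it suffices to apply (T$\ell$) for $\cT$ to the single $\cL$-definable map $(x) \mapsto f_\eta(x)$ and the single $\cL$-definable map $(x) \mapsto (\psi_\eta(x), \eta')$ — wait, more carefully: one applies (T$\ell$) over the parameter set $A \cup \{\sigma\}$ (allowed since $\sigma \in \RV_K$) to the $\cL$-definable-with-parameter-$\sigma$, hence $\cL'$... no. The clean route is: (T$\ell$) for $\cT$ applied in $K$ with $A$ enlarged by the single $\RV$-tuple $\sigma$ yields an $(\cL, A\sigma)$-definable decomposition $\chi$ such that $f_\sigma$ satisfies conditions 2,3 and $\psi_\sigma$ is constant on twisted boxes; since $\chi$ is $\cL$-definable with $\RV$-parameters and conditions 2,3 are geometric statements about balls in $K$ that do not refer to the language, $\chi$ is in particular $\cL'$-definable over $A$ and witnesses (T$\ell$) for the $\cL'$-data $f, \psi$.

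The step I expect to be the main obstacle is the very first one — making the reduction ``$\cL'$-definable $=$ fibre over $\RV^N$ of $\cL$-definable'' fully precise for maps and for the parameter bookkeeping, since property (T$\ell$, D) is stated with parameter sets $A \subset K \cup \RV$ and one must check that absorbing the $\RV$-predicate-coding tuple $\sigma$ into $A$ does not change which objects are ``$A$-definable'' in the sense required (it does not, precisely because $A$ is allowed to meet $\RV$). A secondary subtlety: one must confirm that a decomposition $\chi : K^\ell \to \RV^k$ and its center tuples, being defined by $\cL_\val$-terms in the $c_i$ together with $\rv$, remain decompositions in $\cL'$ — but this is automatic since $\cL' \supset \cL$ and the notion of twisted box is language-independent. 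Once the coding observation is granted, the rest is a direct transfer, and in particular one gets (T$\ell$, D) for $\cT'$ with no appeal to $\ell$-h-minimality, which is exactly what is needed to then run the argument of the previous subsection inside a suitable $\RV$-expansion with $\acl = \dcl$.
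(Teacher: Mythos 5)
There is a genuine gap, and it is exactly at the step you yourself flag as ``the main obstacle'': the claimed reduction is false. An $\RV$-expansion adds \emph{arbitrary} predicates on cartesian powers of $\RV$, which need not be $\cL$-definable even with parameters. Consequently an $\cL'$-definable set $X\subset K^\ell$ is \emph{not} in general a fibre $(X_0)_\sigma$ of an $\cL$-definable family over $\RV^N$: such a fibre would be $\cL(\sigma)$-definable, i.e.\ $\cL$-definable over a finite parameter set, whereas already $\rv^{-1}(P)$ for a new, wildly chosen predicate $P\subset\RV$ is $\cL'$-definable but not $\cL(B)$-definable for any finite $B$. The same objection defeats your treatment of property (D) and of (T$\ell$): you cannot write an $\cL'$-definable $f:K^\ell\to K$ as $f_\sigma$ for an $\cL$-definable family $(f_\eta)_{\eta\in\RV^N}$. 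This is not a bookkeeping subtlety about where the parameters live; the coding claim itself is wrong, and everything downstream of it collapses.

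The paper's proof routes around this with two genuinely nontrivial inputs from \cite[Sec.\,4]{CHR}. First, \cite[Cor.\,4.1.17]{CHR}: for an $\cL'(A)$-definable $f:K^\ell\to K$ one has $f(x)\in\acl_{\cL,K}(A\cup\{x\})$, so $f(x)$ lies in an $\cL(A)$-definable family of finite sets $C_x$; after injecting $C_x$ into $\RV^k$ via \cite[Lem.\,2.5.3]{CHR} one writes $f(x)=\tilde f(x,h(x))$ with $\tilde f$ genuinely $\cL(A)$-definable and only the $\RV$-valued selector $h:K^\ell\to\RV^k$ remaining $\cL'$-definable. Second, \cite[Lem.\,4.3.4]{CHR}: $\cL'$-definable maps into $\RV$-sorts factor through $\cL$-definable ones, which is what tames $h$ and $\psi_0$. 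One then applies (T$\ell$) in $\cL$ to the family $\tilde f(\cdot,\xi)$, obtaining decompositions $\chi_\xi$, and glues them into the $\cL'$-definable decomposition $x\mapsto(\chi_{h(x)}(x),\psi(x))$, checking by hand that its fibres are twisted boxes. If you want to repair your argument you must replace the ``$\cL'$-definable $=$ fibre of $\cL$-definable family'' claim by these two facts (or reprove them from (T$\ell$, D)); as written, the proposal does not establish the theorem.
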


Note that, once we prove Theorem~\ref{thm: equivalences}, we also know that $\RV$-expansions preserve $\ell$-h-minimality.

\begin{proof}
We already know this for $\ell=1$, by~\cite[Thm.\,4.1.19]{CHR} together with Lemma~\ref{lem:1hmin.is.TellD} and Lemma~\ref{lem:TellD.is.1hmin}. In particular, we know that $\Th_{\cL'}(K)$ is 1-h-minimal. So let $\ell>1$. We follow the same strategy as in the proof of \cite[Thm.\,4.1.19]{CHR}. 

Let $f: K^\ell\to K$ and $\psi_0: K^\ell\to \RV^p$ be $\cL'(A)$-definable, for some $A\subset K\cup \RV$. By \cite[Cor.\,4.1.17]{CHR} we have that $f(x) \in \acl_{\cL,K}(A\cup\{x\})$. Hence we can find an $\cL(A)$-definable family of finite sets $C_x, x\in K^\ell$ such that $f(x)\in C_x$. By~\cite[Lem.\,2.5.3]{CHR} there exists an $\cL(A)$-definable family of injections $g_x: C_x\to \RV^k$ for some integer $k$. Define the maps
\begin{align*}
h &: K^\ell\to \RV^k: x\mapsto g_x(f(x)), \\
\tilde{f} &: K^\ell\times \RV^k\to K: (x,\xi)\mapsto \begin{cases}
		g_x^{-1}(\xi) &\text{ if } \xi\in g_x(C_x), \\
		0 &\text{ else}.
	\end{cases}
\end{align*}
Note that $h$ is $\cL'(A)$-definable while $\tilde{f}$ is $\cL(A)$-definable. Also, for any $x\in K^\ell$ we have $f(x) = \tilde{f}(x,h(x))$.

By \cite[Lem.\,4.3.4]{CHR} we obtain an $\cL(A)$-definable map $\psi_1: K^\ell\to \RV^q$ such that $h$ and $\psi_0$ are constant on the fibres of $\psi_1$. Using property (T$\ell$), we may then replace $\psi_1$ by an $\cL(A)$-definable decomposition $\psi: K^\ell\to \RV^m$ such that $h$ and $\psi_0$ are still constant on the fibres of $\psi$. Using compactness, we can find a $\cL(A)$-definable family over $\RV^k$ of decompositions $\chi_\xi: K^\ell\to \RV^m$ refining $\psi$ and such that the map $x\mapsto \tilde{f}(x,\xi)$ satisfies property (T$\ell$) with respect to $\chi_\xi$. Now consider the $\cL'(A)$-definable map
\[
\theta: K^\ell\to \RV^n\times \RV^m: (\chi_{h(x)}(x), \psi(x)).
\]
We claim that $\theta$ is a decomposition of $K^\ell$ such that $f$ satisfies property (T$\ell$) with respect to $\theta$. Let $F$ be the fibre of $\theta$ above $(\zeta, \xi')$ for some $(\zeta, \xi')\in \RV^n\times \RV^m$. Assume that $F$ is non-empty. Then $h$ is constant on $F$, say $\xi=h(x)$ for any $x\in F$. Then clearly $F\subset \chi_\xi^{-1}(\zeta)$. For the other inclusion, let $x_0\in F\subset \chi_\xi^{-1}(\zeta)$. Since $\psi$ is constant on fibres of $\chi_\xi$ we must have that $\psi(x) = \psi(x_0) = \xi'$ for any $x\in \chi_\xi^{-1}(\zeta)$. Therefore $x$ belongs to $F$. Hence any fibre of $\theta$ is a twisted box, with center parametrized by $(\xi, \zeta)$, so that $\theta$ is a decomposition. Finally, note that $f$ satisfies property (T$\ell$) on the twisted boxes of $\theta$. This proves that $\Th_{\cL'}(K)$ satisfies property (T$\ell$, D).
\end{proof}

\begin{cor}\label{cor: acl=dcl by RVexpansion}
Let $\ell$ be an integer and let $\Th_\cL(K)$ have property (T$\ell$, D) in some language $\cL$. Then there exists an expansion $\cL'$ of $\cL$ such that $\Th_{\cL'}(K)$ still has property (T$\ell$, D), and such that $\Th_{\cL'}(K)$ has $\acl=\dcl$.
\end{cor}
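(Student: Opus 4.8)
Corollary \ref{cor: acl=dcl by RVexpansion}: Given a theory $\Th_\cL(K)$ with (T$\ell$, D), find an expansion $\cL'$ such that $\Th_{\cL'}(K)$ still has (T$\ell$, D) and has $\acl = \dcl$.

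The plan is to take $\cL'$ to be $\cL$ enlarged by a single binary predicate on $\RV$ interpreting a linear order, and to verify the two required properties with the help of Theorem~\ref{thm: RVexpansions}. First recall that property (T$\ell$, D) implies $1$-h-minimality: for $\ell=1$ this is Lemma~\ref{lem:TellD.is.1hmin}, and for $\ell\geq 2$ it was noted after that lemma that statement~(2) of Theorem~\ref{thm: equivalences} descends from $\ell$ to $\ell-1$, so that (T$\ell$, D) implies (T$1$, D). Hence $\Th_\cL(K)$ is $1$-h-minimal, in particular $0$-h-minimal.

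Now choose a linear order on $\RV_K$ (possible, as $\RV_K$ is a set), let $\cL'$ be $\cL$ together with a binary predicate symbol interpreted by it, and observe that $\cL'$ is an $\RV$-expansion of $\cL$ and that the sentence expressing ``this predicate is a linear order'' belongs to $\Th_{\cL'}(K)$. By Theorem~\ref{thm: RVexpansions}, $\Th_{\cL'}(K)$ still has property (T$\ell$, D), hence (as above) is again $0$-h-minimal. To see that $\Th_{\cL'}(K)$ has $\acl=\dcl$, take $\cL'(\emptyset)$-definable $X\subseteq K^{n+1}$ such that $\pi_{\leq n}$ has all fibres on $X$ of size exactly $m$. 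By \cite[Lem.\,2.5.3]{CHR}, applicable since $\Th_{\cL'}(K)$ is $0$-h-minimal, there is an $\cL'(\emptyset)$-definable family of injections $h_x\colon X_x\hookrightarrow \RV^{k}$, for $x\in\pi_{\leq n}(X)$. Then $h_x(X_x)$ is a finite subset of $\RV^{k}$, and sending $x$ to the preimage under $h_x$ of the lexicographically least element of $h_x(X_x)$ defines an $\cL'(\emptyset)$-definable function $K^n\to K$ with graph inside $X$. By the characterisation of $\acl=\dcl$ recalled at the start of Section~\ref{sec: preservation under RV expansions}, this is what was needed.

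There is no real obstacle: the one non-routine ingredient is that $\RV$-expansions preserve (T$\ell$, D), which is exactly Theorem~\ref{thm: RVexpansions}; everything else is the standard observation that a linear order on $\RV$ converts the embeddings of finite sets into $\RV^{k}$ coming from \cite[Lem.\,2.5.3]{CHR} into Skolem functions. Alternatively one could simply quote the construction of an $\RV$-expansion with $\acl=\dcl$ from \cite{CHR} and then apply Theorem~\ref{thm: RVexpansions} only for the preservation of (T$\ell$, D); the above is merely the self-contained form of that argument.
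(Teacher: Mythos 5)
Your argument is correct and is essentially the paper's: the paper obtains $\cL'$ by citing \cite[Prop.\,4.3.3]{CHR} (whose proof is exactly your construction of adjoining a linear order on $\RV$ and using \cite[Lem.\,2.5.3]{CHR} to extract Skolem functions) and then invokes Theorem~\ref{thm: RVexpansions} for preservation of (T$\ell$, D), just as you do. You have merely unfolded the cited construction into a self-contained form, which you yourself note at the end.
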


\begin{proof}
The language $\cL'$ can be obtained by an $\RV$-expansion by \cite[Prop.\,4.3.3]{CHR}, which preserves property (T$\ell$, D) by the previous theorem.
\end{proof}

Let $\cL'$ be an $\RV$-expansion of $\cL$. To move from $\cL'$-definable objects to $\cL$-definable ones, we will use the following lemma.

\begin{lem}
Let $\cL'$ be an $\RV$-expansion of $\cL$ and assume that $\Th_\cL(K)$ is 1-h-minimal. Let $\chi': K^n\to \RV^{k'}$ be an $\cL'$-definable decomposition. Then there exists an $\cL$-definable decomposition $\chi: K^n\to \RV^k$ such that every twisted box of $\chi$ is contained in a twisted box of $\chi'$.
\end{lem}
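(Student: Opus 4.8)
My plan is to transport the problem into the base language $\cL$ via the structure theory of $\RV$-expansions from \cite{CHR}, and then invoke cell decomposition in the $\cL$-theory. As usual I may add finitely many parameters from $K\cup\RV$ to $\cL$ (this preserves $1$-h-minimality and the $\RV$-expansion structure), so I assume $\chi'$ is $\emptyset$-$\cL'$-definable. Note that $\Th_{\cL'}(K)$ is again $1$-h-minimal, by \cite[Thm.\,4.1.19]{CHR}, so the machinery of \cite{CHR} is available over $\cL'$ as well as over $\cL$.

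The first step is \cite[Lem.\,4.3.4]{CHR}, used exactly as in the proof of Theorem~\ref{thm: RVexpansions}: applied to the $\cL'$-definable map $\chi'\colon K^n\to\RV^{k'}$ it yields an $\cL$-definable map $\psi\colon K^n\to\RV^q$ on whose fibres $\chi'$ is constant. The second step is to upgrade $\psi$ to an $\cL$-definable \emph{decomposition}. Since $\Th_\cL(K)$ is $1$-h-minimal it has cell decomposition (\cite[Thm.\,5.2.4]{CHR}; for $n=1$ this is just property (T1) applied to $\psi$ with trivial $f$, and for larger $n$ one iterates over coordinates), so there is an $\cL$-definable decomposition $\chi\colon K^n\to\RV^k$ on whose twisted boxes $\psi$, hence also $\chi'$, is constant. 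The conclusion is then immediate: if $F$ is a twisted box of $\chi$ then $\chi'$ takes a single value $\xi$ on $F$, so $F\subseteq\chi'^{-1}(\xi)$, and $\chi'^{-1}(\xi)$ is a twisted box of $\chi'$ by definition of a decomposition. (This formulation also handles degenerate, i.e.\ singleton, twisted boxes without fuss.)

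The point I would check most carefully is that the cited results genuinely apply under the sole hypothesis that $\Th_\cL(K)$ is $1$-h-minimal — with no $\acl=\dcl$ assumption and with property (T$\ell$) available only for $\ell=1$: namely that \cite[Lem.\,4.3.4]{CHR} separates $\cL'$-definable $\RV$-valued maps over the base language in this generality, and that cell decomposition adapted to the fibres of an $\cL$-definable $\RV$-valued map follows from $1$-h-minimality alone. If one prefers to avoid cell decomposition in dimensions $>1$, there is a more hands-on alternative by induction on $n$. For $n=1$ one finds a finite $\cL$-definable set $C$ such that every ball $1$-next to $C$ lies in a single fibre of $\chi'$ and then applies Lemma~\ref{lem:decomp.dim.one}: the center-of-fibre map $\rho\colon K\to K$, $x\mapsto$ (center of $\chi'^{-1}(\chi'(x))$), is $\cL'$-definable with $\rho(x)\in\acl_\cL(x)$ by \cite[Cor.\,4.1.17]{CHR}; writing $\rho(x)=\tilde\rho(x,h(x))$ with $\tilde\rho$ $\cL$-definable and $h\colon K\to\RV^k$ $\cL'$-definable as in the proof of Theorem~\ref{thm: RVexpansions}, one prepares $h$ to be constant on balls $1$-next to a finite $\cL$-definable set (using \cite[Lem.\,4.3.4]{CHR} and $0$-h-minimality) and each $\tilde\rho(\cdot,\eta)$ to satisfy the valuative Jacobian property \cite[Lem.\,2.8.5]{CHR} there; a short ultrametric argument — using that $\rho$ is constant on each fibre of $\chi'$ while a twisted box never contains its center (after discarding the finitely many singleton fibres) — then shows these balls are contained in fibres of $\chi'$. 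The inductive step applies the $n=1$ case uniformly in the last coordinate and combines it with the $\cL$-definable decomposition of $K^{n-1}$ produced by induction from the decomposition of $K^{n-1}$ induced by $\chi'$. Either way, the real work is just keeping straight which objects are $\cL$- and which are $\cL'$-definable; the geometry is light.
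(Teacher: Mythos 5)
Your main argument is exactly the paper's proof: apply \cite[Lem.\,4.3.4]{CHR} to obtain an $\cL$-definable map $\psi$ whose fibres are contained in the twisted boxes of $\chi'$, then refine $\psi$ to an $\cL$-definable decomposition via cell decomposition \cite[Thm.\,5.2.4, Add.\,1]{CHR}. This is correct and matches the paper; the alternative inductive argument you sketch is not needed.
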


\begin{proof}
First use \cite[Lem.\,4.3.4]{CHR} to obtain an $\cL$-definable map $\psi: K^n\to \RV^{k''}$ such that every fibre of $\psi$ is contained in a twisted box of $\chi'$. Now use cell decomposition \cite[Thm.\,5.2.4, Add.\,1]{CHR} to find the desired decomposition $\chi$.
\end{proof}

\subsection{From (T$\ell$, D) to $\ell$-h-minimality}

We can now finish the proof of Theorem \ref{thm: equivalences}.

\begin{lem}\label{lem: Tell,D to hmixmin}
Assume that $\cT$ satisfies (T$\ell$, D). Then $\cT$ is $\ell$-h-minimal.
\end{lem}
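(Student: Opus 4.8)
The plan is to verify the definition of $\ell$-h-minimality directly. Fix a model $K$ of $\cT$, a parameter set $A\subset K$, a subset $A'\subset\RV$ with $\#A'\le\ell$, and an $(A\cup A')$-definable set $X\subset K$; since we are in equicharacteristic zero the ideal $I$ is $B_{<\lambda}(0)$ and there is no $\RV_n$ to worry about. We must produce a finite $A$-definable set $C$ that $\lambda$-prepares $X$. The first reduction is to pass to a language where $\acl=\dcl$: by Corollary~\ref{cor: acl=dcl by RVexpansion} there is an $\RV$-expansion $\cL'$ of $\cL$ with $\Th_{\cL'}(K)$ still satisfying (T$\ell$, D) and having algebraic Skolem functions. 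By the last lemma of the excerpt (producing $\cL$-definable decompositions below $\cL'$-definable ones, valid because (T1, D) already gives $1$-h-minimality of $\Th_\cL(K)$ via Lemmas~\ref{lem:TellD.is.1hmin} and~\ref{lem:TellD.is.0hmin}), any $\cL$-preparation statement can be recovered from its $\cL'$-counterpart: a finite $A$-definable set $C$ in the original language that refines an $\cL'$-definable preparing set still prepares $X$, and $\RV$-unions of finite definable families stay finite by $0$-h-minimality. So it suffices to prove $\ell$-h-minimality under the extra hypothesis $\acl=\dcl$.

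Assuming $\acl=\dcl$, write the $\ell$ parameters from $\RV$ as $A'=\{\xi_1,\dots,\xi_\ell\}$ and choose, for each $\xi_j\in\RV^\times$, an element $a_j\in K^\times$ with $\rv(a_j)=\xi_j$ (using $\dcl=\acl$ to make the choice definably, or simply quantifying over all such tuples). The set $X$ becomes $(A\cup\{a_1,\dots,a_\ell\})$-definable, so there is an $A$-definable family $X_{(y_1,\dots,y_\ell)}\subset K$ with $X=X_{(a_1,\dots,a_\ell)}$. Consider the $A$-definable characteristic function
\[
f\colon K^{\ell+1}\to K,\qquad f(y_1,\dots,y_\ell,x)=\mathbf 1_{X_{(y_1,\dots,y_\ell)}}(x).
\]
Restricting to the slices $y_j$ varying over their respective $\rv$-fibres is wasteful; instead apply property (T$\ell$) (together with the compactness/family version, i.e.\ Lemma~\ref{lem: ellhmin to Tell, D} applied in the $\ell$-h-minimal direction — but here we only have (T$\ell$, D), so we use (T$\ell$) directly) to the function $g\colon K^\ell\to K$, $g(y_1,\dots,y_{\ell-1},x):=f(y_1,\dots,y_{\ell-1},a_\ell\text{-free version})$, inducting downward on the number of $\RV$-parameters. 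More precisely, the cleanest route is an induction on $\ell$: peel off one $\RV$-parameter at a time, at each stage applying property (T$\ell$) to an $\ell$-variable characteristic function whose $\RV$-valued auxiliary data $\psi$ records the decomposition already built for the remaining parameters, exactly mirroring the dimension induction in the proof of Lemma~\ref{lem: ellhmin to Tell, D}. The output of (T$\ell$) is an $A$-definable decomposition $\chi\colon K^\ell\to\RV^k$ on whose twisted boxes the characteristic function is constant; the centers $c_\xi$ of the fibres form an $A$-definable family, and by $0$-h-minimality (Lemma~\ref{lem:TellD.is.0hmin}) and finiteness of $\RV$-unions, $C:=\bigcup_\xi\{c_\xi\}$ is a finite $A$-definable set that $1$-prepares the relevant slice; tracking the $\RV_\lambda$-refinement in condition~3 of (T$\ell$) upgrades this to $\lambda$-preparation. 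Substituting back $y_j=a_j$ (equivalently, $\rv$-fibre $=\xi_j$) gives a finite $A$-definable set $\lambda$-preparing $X$ itself.

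The main obstacle I anticipate is the bookkeeping of the induction that trades $\RV$-parameters for $K$-parameters: property (T$\ell$) is a statement about functions on $K^\ell$, so to consume $\ell$ many $\RV$-parameters one must genuinely use all $\ell$ coordinates, and one has to be careful that at each inductive step the auxiliary $\RV$-map $\psi$ fed into (T$\ell$) encodes enough of the previously constructed decompositions that constancy on twisted boxes is inherited — this is the same subtlety that makes the proof of Lemma~\ref{lem: ellhmin to Tell, D} technical, now run in reverse. A secondary point requiring care is that (T$\ell$) only directly gives a decomposition for a \emph{single} function (or, via the characteristic-function trick at the end of Lemma~\ref{lem: ellhmin to Tell, D}'s proof, for constancy of one $\RV$-valued map), so to handle the family $X_{(y_1,\dots,y_\ell)}$ uniformly one invokes compactness to reduce to finitely many twisted-box types and then a common refinement; verifying that this refinement is still an $A$-definable decomposition (its fibres genuine twisted boxes with definably-parametrized centers) uses cell decomposition in the already-established $1$-h-minimal context, as in the last lemma of the excerpt. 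Once these two points are handled, collecting the centers and invoking finiteness of $\RV$-indexed unions of finite sets is routine.
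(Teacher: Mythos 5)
There is a genuine gap at the heart of your argument: you apply property (T$\ell$) to the wrong functions. You propose to feed (T$\ell$) the characteristic function of $X$ (in the variable $x$ being prepared, together with some of the lifted parameters), collect the centers of the resulting decomposition, and conclude by ``finiteness of $\RV$-indexed unions.'' But the union you would actually need to take is $\bigcup_{y\in Y} C_y$ over the fibre $Y=\prod_i \rv_\lambda^{-1}(\xi_i)\subset K^\ell$, which is a \emph{field}-indexed infinite union; no finiteness mechanism applies to it, and this is precisely the obstruction that makes the lemma nontrivial. The paper's proof instead applies (T$\ell$) to the \emph{center functions} $c_1,\dots,c_n\colon K^\ell\to K$ of the preparing sets $C_y$ --- functions of the $\ell$ lifted parameters $y$, not of $x$ --- obtained from $\acl=\dcl$ applied to the family $(C_y)_y$. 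It never forms the union over $Y$; rather it prepares the images $c_i(\chi_i^{-1}(\xi))$ by a finite set $C$ and then runs a ball argument: for $x\in X$, $x'\notin X$, the smallest closed ball $B_1$ containing $x,x'$ meets some $c_i(y_0)$, the $\lambda$-twisted box $F'\ni y_0$ satisfies $c_i(F')\subsetneq B_1$ (because $Z_i=\{y\mid c_i(y)\in X_y\}$ is a union of twisted boxes while $B_1$ is neither contained in nor disjoint from $X$), and condition~3 of (T$\ell$) converts $\radop c_i(F')=\lambda\radop c_i(F)$ into the inequality $|x-d|\lambda\le|x-x'|$ for some $d\in C$. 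Your sentence ``tracking the $\RV_\lambda$-refinement in condition~3 of (T$\ell$) upgrades this to $\lambda$-preparation'' is exactly this missing argument, and it cannot be supplied from the decomposition of a characteristic function of $X$: that route only reproves $0$-h-minimality (Lemma~\ref{lem:TellD.is.0hmin}), where there are no $\RV_\lambda$-parameters to consume.

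Two further points. First, your ``peel off one $\RV$-parameter at a time'' induction, as described, would only ever invoke lower-dimensional instances of (T$\ell$) on each slice; if that worked it would show that (T$1$, D) already implies $\ell$-h-minimality for all $\ell$, i.e.\ that $1$-h-minimality implies $\omega$-h-minimality, which is an open question in this paper. The $\ell$ parameters must be consumed simultaneously by a single application of (T$\ell$) to functions of all $\ell$ lifted coordinates. Second, you conflate $\RV$ with $\RV_\lambda$: the parameters $\xi_j$ live in $\RV_\lambda$ for an arbitrary $\lambda\le 1$, the fibre $Y$ is a product of $\rv_\lambda$-fibres, and it is essential that $Y$ contains a $\lambda$-twisted box of the decomposition (guaranteed by making the coordinatewise map $\rv$ constant on twisted boxes); lifting $\xi_j$ to a point $a_j\in K$ ``definably'' is not possible and is in any case not what is needed --- one must quantify over the whole fibre, as the paper does. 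Your opening reduction to $\acl=\dcl$ via \Cref{cor: acl=dcl by RVexpansion} and the closing removal of $\cL'$- and $\RV$-parameters are in line with the paper, but the core of the proof is missing.
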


\begin{proof}
We induct on $\ell$. Since the case $\ell=1$ is already proven, we may assume that $\ell>1$. Let $K$ be a model of $\cT$. By Corollary \ref{cor: acl=dcl by RVexpansion} there exists an $\RV$-expansion $\cL'\supset\cL$ of the language such that the resulting theory of $K$ in $\cL'$ has $\acl=\dcl$ and still satisfies property (T$\ell$, D). Moreover, $\Th_{\cL'}(K)$ is 1-h-minimal, by induction. We only use this, i.e.\ that $\Th_{\cL'}(K)$ is 1-h-minimal, satisfies (T$\ell$, D) and has $\acl=\dcl$.

Let $X$ be $\cL(A\cup \RV\cup A')$-definable, with $A\subset K$ and $A' = \{\xi_1, ..., \xi_\ell\}\subset \RV_\lambda$ for some $\lambda\in\Gamma_K^\times, \lambda\leq 1$. We want to $\lambda$-prepare $X$ with a finite $A$-definable set $C$. Consider $X$ as an $(A\cup\RV)$-definable family $(X_y)_y$ with $y\in K^\ell$ such that $X_y = X$ if $y\in Y := \prod_i \rv_\lambda^{-1}(\xi_i)$. For every $y\in K^\ell$ take a finite $\cL(A\cup\RV\cup\{y\})$-definable set $C_y\subset K$ such that $C_y$ 1-prepares $X_y$. By compactness, we can assume that $(C_y)_y$ is a $\cL(A\cup\RV)$-definable family with a parameter $y\in K^\ell$. Using $\acl=\dcl$, there exist finitely many $\cL'(A\cup \RV)$-definable functions
\[
c_1, ..., c_n: K^\ell\to K
\]
such that for any $y\in K^\ell$, $C_y$ is a subset of $\{c_1(y), ..., c_n(y)\}$. Define 
\[
\rv: K^\ell\to \RV^{\ell}: (y_1, ..., y_\ell)\mapsto (\rv(y_1), ..., \rv(y_\ell)),
\]
and for $i=1, \ldots, n$, let $Z_i = \{y\in K^\ell\mid c_i(y)\in X_y\}$. By property (T$\ell$,D) in $\cL'$ we can find an $\cL'(A\cup \RV)$-definable decomposition $\psi_i: K^\ell\to \RV^{m_i}$ such that $\rv$ is constant on the twisted boxes of $\psi_i$ and such that $Z_i$ is a union of twisted boxes of $\psi_i$. Use property (T$\ell$, D) to refine this to an $\cL'(A\cup\RV)$-definable decomposition $\chi_i: K^\ell\to \RV^{k_i}$ such that also $c_i$ has property (T$\ell$) on the twisted boxes of $\chi_i$. Let $C_i$ be a finite $\cL'(A\cup\RV)$-definable set 1-preparing $c_i(\chi_i^{-1}(\xi))$ for any $\xi\in \RV^{k_i}$. This uses family preparation under 0-h-minimality~\cite[Prop.\,2.6.2]{CHR}. Let $C=C_1\cup ... \cup C_n$, which is a finite $\cL'(A\cup\RV)$-definable set. 

We claim that $C$ $\lambda$-prepares $X$. So take $x\in X, x'\not\in X$. We want to find a $d\in C$ such that $|x-d|\lambda\leq |x-x'|$. Let $B_1$ be the smallest closed ball containing $x$ and $x'$. Fix a $y_0$ in $Y$. Since $C_{y_0}$ 1-prepares $X = X_{y_0}$, there exists an $i$ such that $c_i(y_0)\in B_1$. Let $F$ be the twisted box of $\chi_i$ containing $y_0$ and let $F'$ be the $\lambda$-twisted box containing $y_0$. Put $Y' = Y\cap F$ and note that $F'\subset Y$ since $\rv$ is constant on $F$. We claim that $c_i(F')\subsetneq B_1$. By property (T$\ell$) for $c_i$, $c_i(F')$ is either a singleton or an open ball. If it is a singleton, then so is $c_i(F)$. But then $c_i(F)$ is contained in a finite $\cL'(A\cup\RV)$-definable set, which already 1-prepares $X$. Indeed, the set of $\xi\in \RV^{k_i}$ for which $c_i$ is constant on $\chi_i^{-1}(\xi)$ is $\cL'(A\cup \RV)$-definable, and therefore the resulting map 
\[
\RV^{k_i}\to K: \xi\mapsto c_i(x), \text{ for any } x\in \chi_i^{-1}(\xi)
\]
has finite $\cL'(A\cup\RV)$-definable image by~\cite[Cor.\,2.6.7]{CHR}. So we can assume that $c_i(F')$ is an open ball. Both $c_i(F')$ and $B_1$ contain $c_i(y_0)$. The set $Z_i$ is partitioned by the twisted boxes of $\chi_i$, and so $c_i(Y')$ is either disjoint from $X$ (if $F\cap Z_i=\emptyset$), or contained in $X$ (if $F\subset Z_i$). But $B_1$ is neither contained, nor disjoint from $X$, from which we conclude that $c_i(F')\subsetneq B_1$. 

By property (T$\ell$) for $c_i$, $c_i(F')$ is an open ball of radius 
\[
\radop (c_i(F')) \leq \radcl (B_1) = |x-x'|.
\]
Since $C$ 1-prepares $c_i(F)$, there exists some $d\in C$ such that
\[
|c_i(y_0)-d| \leq \radop(c_i(F)).
\]
Using property (T$\ell$) for $c_i$ again, this yields that
\[
|c_i(y_0)-d|\lambda\leq \lambda\radop(c_i(F))  = \radop (c_i(F'))\leq \radcl B_1 = |x-x'|.
\]
Using that $c_i(y_0)\in B_1$, we conclude that
\[
|x-d|\lambda \leq \max\{|x-c_i(y_0)|\lambda, |c_i(y_0)-d|\lambda\} \leq |x-x'|.
\]
Finally, since $\cL'$ is obtained from $\cL$ by an $\RV$-expansion, \cite[Cor.\,4.1.17]{CHR} implies that there exists a finite $\cL(A\cup\RV)$-definable set $D$ containing $C$. To remove the $\RV$-parameters, we apply~\cite[Cor.\,2.6.10]{CHR} to obtain a finite $\cL(A)$-definable set $D'$ containing $D$. This set still $\lambda$-prepares $X$.
\end{proof}

\section{Hensel minimality in mixed characteristic}\label{sec: mixed char}

In this section we develop $\ell$-h-minimality in mixed characteristic. The main result is that $\ell$-h-minimality is preserved under coarsening of the valuation. 

\subsection{Coarsening the valuation}

Let $\cT$ be a theory of Henselian valued fields of characteristic zero (so possibly of mixed characteristic), in a language $\cL$ expanding the language of valued fields. Let $K$ be a model of $\cT$. We denote by $\cO_{K, \eqc}$ the smallest subring of $K$ containing $\cO_K$ and $\QQ$. This is again a valuation ring and we denote the corresponding valuation by $|\cdot|_\eqc: K\to \Gamma_{K,\eqc}$. This is the finest coarsening for which the resulting valued field is of equicharacteristic zero. In general, the coarsened valuation can be trivial (e.g.\ for $\QQ_p$). If the coarsening is non-trivial then we use the notation $\rv_\eqc$ and $\RV_\eqc$ to denote the leading term structure with respect to $|\cdot |_\eqc$. We denote by $\cL_{\eqc}$ the expansion of $\cL$ by a predicate for $\cO_{K, \eqc}$.

If $|\cdot|_c$ is any non-trivial coarsening of the valuation, then we use the notation $\cO_{K, c}$ for the coarsened valuation ring, as well as $\RV_c, \rv_c$ for the leading term structure. We use the notation $\cL_c$ for the expansion of $\cL$ by a predicate for $\cO_{K,c}$. We will use the notation $K_c$ to refer to $K$ equipped with the coarsened valuation $|\cdot |_c$.

\begin{thm}\label{thm: coarsening the valuation}
Assume that $\cT$ is $\ell$-h-minimal for some $\ell\geq 1$ and that all models have equicharacteristic zero. Let $K$ be a model of $\cT$ and let $|\cdot|_c$ be a non-trivial coarsening of the valuation. Then the theory of $K$ in $\cL_c$ with respect to the coarsened norm $|\cdot |_c$ is still $\ell$-h-minimal.
\end{thm}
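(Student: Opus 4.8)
The plan is to use the geometric criterion (T$\ell$, D) from Theorem \ref{thm: equivalences}: since $\cT$ is $\ell$-h-minimal, so is $\Th_{\cL}(K)$, hence $\Th_{\cL}(K)$ satisfies (T$\ell$, D); I will show $\Th_{\cL_c}(K_c)$ satisfies (T$\ell$, D) as well, and then invoke Theorem \ref{thm: equivalences} in the other direction to conclude $\ell$-h-minimality for the coarsened structure. The point is that (T$\ell$, D) is a purely geometric statement about how definable functions act on twisted boxes and scale radii, and the coarsened valuation is obtained from the fine one by collapsing the (equicharacteristic zero) residue-level structure. First I would set up the relation between the two leading-term structures: the coarsened value group $\Gamma_{K,c}$ is a quotient of $\Gamma_K$, and there is a corresponding surjection $\RV \to \RV_c$ whose kernel/fibres are controlled by the finer structure; a ball that is $1$-next to a finite set in the coarse valuation decomposes into balls next to that set in the fine valuation, and a fine-valuation twisted box refines a coarse one. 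Crucially, $\Gamma_{K,c}$ embeds back into $\Gamma_K$ as a convex subgroup quotient, so every $\lambda_c \le 1$ in $\Gamma_{K,c}^\times$ lifts to some $\lambda \le 1$ in $\Gamma_K^\times$, and open balls of radius $\lambda_c$ in $K_c$ are unions of open balls of radius $\lambda$ in $K$.

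Next, given an $\cL_c(A)$-definable $f\colon K^\ell \to K$ and $\psi\colon K^\ell \to \RV_c^{k'}$, I would observe that these are in particular $\cL_{\eqc}(A)$-definable, hence (after the predicate for $\cO_{K,c}$ is absorbed) $\cL(A\cup\{\text{coarsening parameter}\})$-definable — more carefully, the coarsening predicate $\cO_{K,c}$ is itself an $\cL$-definable-with-parameters (in fact $\RV$-enrichable) object, so by Theorem \ref{thm:preservation.RV.expansion} / Theorem \ref{thm: RVexpansions} we may replace $\cL$ by an $\RV$-expansion in which $f$ and $\psi$ become $\cL(A)$-definable while $\ell$-h-minimality and (T$\ell$, D) are preserved. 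In that enriched $\cL$, apply property (T$\ell$) (via Lemma \ref{lem: ellhmin to Tell, D}) to $f$ together with the auxiliary $\RV$-valued map recording $\psi$ and recording the $\RV$-coordinate needed to read off the coarse leading term, obtaining a fine-valuation $\cL(A)$-definable decomposition $\chi\colon K^\ell \to \RV^k$. I then let $\chi_c$ be the coarsened decomposition: compose $\chi$ with $\RV^k \to \RV_c^k$ and adjust centers, possibly refining so that each coarse twisted box is a union of fine ones with compatible centers (this is where I use that a coarse twisted box, being cut out by $\rv_c$-conditions on coordinates minus centers, is refined by finitely many — or a definable family of — fine twisted boxes). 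One checks: $\psi$ is constant on coarse twisted boxes (arrange this at the fine level); for a coarse twisted box $F_c$, $f(F_c)$ is a union over the fine twisted boxes $F\subset F_c$ of the open balls $f(F)$, all sharing compatible centers, so $f(F_c)$ is an open ball (or singleton) in $K_c$; and for $\lambda_c$-twisted boxes, lift $\lambda_c$ to $\lambda$ in the fine group and use the fine radius-scaling $f(G)$ has radius $\lambda\radop f(F)$ to deduce the coarse statement $f(G_c)$ has $c$-radius $\lambda_c \radop_c f(F_c)$ — the key algebraic fact being that the radius comparison is preserved under the quotient $\Gamma_K \to \Gamma_{K,c}$. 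Property (D) for $\cL_c$ follows from (D) for $\cL$: if $f\colon K\to K$ is $\cL_c(A)$-definable and some coarse fibre $f^{-1}(y)$ is infinite, it is an $\cL$-(enriched-)definable set, and (D) bounds the $y$ for which it is infinite by a finite set; since "infinite" is insensitive to which valuation we use, this gives (D) for $\cL_c$.

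The main obstacle I expect is the bookkeeping around \emph{centers}: when passing from a fine decomposition $\chi$ to a coarse one, the fine centers $c_i$ are only defined fibrewise over $\RV^k$, and to get an honest coarse decomposition of $K^\ell$ I need the coarse twisted boxes to have centers definably parametrized over $\RV_c^k$ and to be genuine unions of fine twisted boxes with \emph{consistent} center tuples in each coordinate. Getting the recursion over coordinates $i=1,\dots,\ell$ to line up — so that at each stage the coarse center $c_i^{\,c}(\pi_{<i})$ differs from the fine center $c_i(\pi_{<i})$ by something lying in the kernel of $\rv_c$, uniformly — requires a preparation step (prepare, at the fine level, the definable set recording "which fine box sits in which coarse box"), analogous to the $G_2, G_3, G_4$ preparations in the proof of Lemma \ref{lem: ellhmin to Tell, D}. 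A second, more minor, technical point is ensuring the lift $\lambda_c \mapsto \lambda$ can be made uniformly definable (or that it suffices to do it pointwise, since property (T$\ell$) quantifies $\lambda$ universally), and checking that $\radop_c$ of an open ball in $K_c$ that is a union of fine balls of radius $\lambda$ is exactly the image of $\lambda$ in $\Gamma_{K,c}$ — this is straightforward but is where the equicharacteristic-zero hypothesis on the fine valuation is quietly used, to guarantee the coarsening is "clean" (the residue structure being collapsed is itself equicharacteristic zero).
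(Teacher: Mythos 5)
Your overall strategy (establish (T$\ell$, D) for the coarsened structure and invoke Theorem \ref{thm: equivalences}) is legitimate in principle, but the execution rests on a backwards picture of how the two leading-term structures relate, and it omits the one issue that makes the theorem non-trivial. First, the direction of the comparison maps: since coarsening enlarges the valuation ring, the coarse maximal ideal satisfies $\cM_{K,c}\subset\cM_K$, so the natural surjection goes $\RV_c\twoheadrightarrow\RV$, not $\RV\to\RV_c$. Consequently a ball $1$-next to $C$ for $|\cdot|_c$ is \emph{contained in} a single fine ball $1$-next to $C$ and is an intersection of fine $\mu$-next balls over $\mu$ in the convex subgroup $H$ being collapsed; likewise a coarse twisted box is contained in a fine one, and a coarse open ball is a nested union of fine open balls with no largest member. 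So the step ``compose $\chi$ with $\RV^k\to\RV_c^k$'' has no meaning: the coarse decomposition you need is a \emph{refinement} of the fine one, and to see that $f$ maps a coarse twisted box $F_c=\bigcap_\mu G_\mu$ onto a coarse open ball you must show $f\bigl(\bigcap_\mu G_\mu\bigr)=\bigcap_\mu f(G_\mu)$, a genuine surjectivity claim requiring a saturation (or additional preparation) argument that your sketch does not supply. (Your claim that $\Gamma_{K,c}$ ``embeds back into $\Gamma_K$'' is also false in general: it is a quotient with no canonical splitting; one can only lift individual elements.)

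Second, and more seriously, the parameter bookkeeping is missing exactly where the hypothesis of $\ell$-h-minimality is consumed. Your reduction to the fine language only absorbs the predicate for $\cO_{K,c}$ (an $\RV$-expansion, as in the paper), but says nothing about parameters from $\RV_c$ itself. A single element of $\RV_{c,\lambda}$ is not an element of $K\cup\RV$; it encodes a compatible family of elements of $\RV_\mu$ for all $\mu$ in a coset of $H'$, so the coarse condition ``at most $\ell$ parameters from $\RV_{c,\lambda}$'' must be traded for uniform preparation of a definable subset of $K\times\RV_\bullet^\ell$ at the fine level --- this is precisely where $\ell$-h-minimality (for $\ell$ fine $\RV_\mu$-parameters, uniformly in $\mu$) is used, and where the fact that $H'$ has no maximal element enters. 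Moreover, arbitrarily many parameters from $\RV_c$ (the coarse analogue of free $\RV$-parameters) cannot be absorbed by choosing preimages in $K$, because the prepared set $C$ is required to be definable without them; the paper handles this by induction on the number $r$ of such parameters, viewing $X$ as a family $(X_y)_{y\in K}$, converting the resulting family of finite sets into functions via Lemma \ref{lem:finite.set.to.functions}, and pushing the preparation down with the valuative Jacobian property. For comparison, the paper does not go through coarse (T$\ell$, D) at all: it verifies the preparation definition of $\ell$-h-minimality for $|\cdot|_c$ directly by these two devices. Without an analogue of both in your argument, the proof does not go through.
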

\begin{proof}
By Theorem \ref{thm: equivalences}, property (T$\ell$, D) is equivalent to $\ell$-h-minimality. So we can use both in what follows.

Denote by $H\leq \Gamma_K^\times$ the convex subgroup by which we coarsen the valuation. Since the valuation is non-trivial, $H\neq \Gamma_K^\times$. If $H=1$, then there is nothing to prove. By Theorem \ref{thm: RVexpansions} we may expand the language $\cL$ by a predicate for the coarsened valuation ring $\cO_{K,c}$ and preserve property (T$\ell$, D) for $\Th_{\cL_c}(K)$ with respect to the valuation $|\cdot|$. Indeed, $\cO_{K,c}$ is a pullback of a predicate on $\RV$. We work in this larger language $\cL_c$. Define
\[
H' = \{\mu\in \Gamma_K^\times \mid \mu<\lambda,  \forall \lambda\in H\}.
\]
Note that $H'$ is $\emptyset$-definable in $\cL_c$ without maximal element and that 
\[
\cM_{K,c} = \bigcap_{\lambda\in H} B_{<\lambda}(0) = \bigcup_{\mu\in H'}B_{<\mu}(0).
\]
We verify the definition for $\ell$-h-minimality with respect to the coarsened valuation. So let $A\subset K, A'\subset \RV_c$ and $A''\subset \RV_{c,\lambda}, \#A''\leq \ell$ for some $\lambda\in \Gamma_{K, c}^\times, \lambda\leq 1$. Since we can freely add constants from $K$ and preserve $\ell$-h-minimality \cite[Lem.\,2.4.1]{CHR}, we may as well assume that $A = \emptyset$. Without loss of generality, $A'$ is finite, so we induct on $r = \#A'$. Let $X$ be $(A'\cup A'')$-definable. We wish to $\lambda$-prepare $X$.

Assume first that $r=0$. Take $\lambda_0\in \Gamma_K^\times, \lambda_0\leq 1$ reducing to $\lambda$ modulo $H$. Let $A'' = \{\xi_1, ..., \xi_\ell\}$. For every $\mu\in \lambda_0 H'$ there is a surjection $\RV_\mu \to \RV_\lambda$. So there is a $\emptyset$-definable family of sets $X(\zeta_1, ..., \zeta_\ell)\subset K$ with $\zeta_i\in \RV_\mu$ and $X(\zeta_1, ..., \zeta_\ell) = X$ if $(\zeta_1, ..., \zeta_\ell)$ maps to $(\xi_1, ..., \xi_\ell)$ under the natural surjection $\RV_\mu\to \RV_\lambda$. Moreover, this $\emptyset$-definable family is uniformly definable over the parameters $\zeta_i$ for all $\mu\in \lambda_0 H'$. Consider the $\lambda_0$-definable set
\[
W = \{(x, \zeta_1, ..., \zeta_\ell) \in K\times \RV_\bullet^\ell\mid \zeta_1, ..., \zeta_\ell\in \RV_\mu, \mu\in \lambda_0 H', x\in X(\zeta_1, ..., \zeta_\ell)\}.
\]
By uniform preparation under $\ell$-h-minimality \cite[Prop.\,2.6.2]{CHR}, there is a finite $\lambda_0$-definable set $C$ uniformly preparing this set $W$. By enlarging $C$ if necessary \cite[Cor.\,2.6.10]{CHR}, we may even assume that $C$ is $\emptyset$-definable. Let $B$ be a ball $\lambda$-next to $C$ and take $x, x'\in B$. Then 
\[
\forall d\in C: |x-x'|_c < \lambda|x-d|_c,
\]
or equivalently
\[
\forall d\in C\, \forall \nu\in H: |x-x'| < \lambda_0 \nu |x-d|.
\]
This means that for any $d\in C$, we have that $\frac{|x-x'|}{\lambda_0|x-d|}$ belongs to $H'$. Since $H'$ has no maximal element, there exists a $\mu\in \lambda_0 H'$ such that $|x-x'| < \mu|x-d|$ for any $d\in C$. But this means that $\rv_\mu(x-d) = \rv_\mu(x'-d)$ for any $d\in C$. Take $\zeta_1, ..., \zeta_\ell$ in $\RV_\mu$ mapping to $\xi_1, ..., \xi_\ell$ under $\RV_\mu\to \RV_\lambda$. Then $X(\zeta_1, ..., \zeta_\ell) = X$. By preparation of $W$, either $x$ and $x'$ are both in $X$ or they are both not in $X$. This proves the case $r=0$.

Now assume that $r>0$ and that the result is true up to $r-1$. We follow the strategy from \cite[Thm.\,2.9.1]{CHR}. Let $A' = A\cup \{\zeta\}$ and $\# A\leq r-1$. Consider $X$ as an $(A\cup A'')$-definable family $(X_y)_y$, $y\in K$ such that $X_y = X$ if $y$ is in $\rv_c^{-1}(\zeta) =:Y$. By adding constants from $K$ and induction, for every $y\in K$ there exists a finite $y$-definable set $(C_y)_y$ such that $X_y$ is $\lambda$-prepared by $C_y$. By compactness, we may without loss assume that $(C_y)_y$ is a $\emptyset$-definable family with a parameter $y\in K$. We apply Lemma~\ref{lem:finite.set.to.functions} to the family $(C_y)_y$ to obtain a $\emptyset$-definable family $(Y_\eta)_{\eta\in \RV^k}$ of subsets of $K$, together with a $\emptyset$-definable family of functions $g_\eta: Y_\eta\to K$ such that for each $x\in K$
\[
E_x = \bigsqcup_{\substack{\eta\in \RV^k \\ x\in Y_\eta}} \{g_\eta(x)\}.
\]
Using family preparation \cite[Prop.\,2.6.2]{CHR}, let $D'$ be a finite $\emptyset$-definable set 1-preparing every $Y_\eta$ and such that every $g_\eta$ satisfies the valuative Jacobian property \cite[Cor.\,3.2.7]{CHR} on balls $1$-next to $D'$. So every $g_\eta$ maps open balls 1-next to $D'$ to open balls and preserves radii of subballs compatibly. In particular, note that this also holds with respect to the coarsened valuation $|\cdot|_c$. By enlarging $D'$, we may assume that $D'$ also 1-prepares the sets $Z_\eta = \{y \in Y_\eta\mid g_\eta(y)\in X_y\}$, for every $\eta\in \RV^k$. Put $D = \{0\}\cup D'$, which is a finite $\emptyset$-definable set.

If $D\cap Y\neq \emptyset$ then the finite $\emptyset$-definable set $\cup_{y\in D}C_y$ will $\lambda$-prepare $X$. So assume that $Y\cap D = \emptyset$. The family of balls 1-next to $D$ forms a $\emptyset$-definable family parametrized by $\RV$-variables, by \cite[Lem.\,2.5.4]{CHR}. So we can find some $\emptyset$-definable finite set $C\subset K$ such that for each $\eta\in \RV^k$, $C$ 1-prepares every set of the form $g_\eta(B)$ where $B$ is an open ball 1-next to $D$ and contained in $Y_\eta$. Note that this also implies that $C$ $|1|_c$-prepares $g_\eta(B')$ for any ball $B'$ which is $|1|_c$-next to $D$ and contained in $Y_\eta$. Indeed, this follows from the valuative Jacobian property for $g_\eta$.

We claim that this $C$ is as desired. Let $x\in X, x'\not\in X$. We want to find $d\in C$ such that $|x-d|_c\lambda\leq |x-x'|_c$. Let $B_1'$ be the smallest closed ball containing both $x, x'$ with respect to the coarsened valuation. Let $B_1$ be the closed ball around $x$ of radius $\radclc (B_1')/\lambda$ and note that $B_1'\subset B_1$. Fix $y_0\in Y$. Since $C_{y_0}$ $\lambda$-prepares $X$, $C_{y_0}\cap B_1$ is non-empty. So there is some $\eta\in \RV^k$ such that $y_0\in Y_\eta$ and $g_\eta(y_0)\in B_1$. 

We claim that $g_\eta(Y)\subsetneq B_1$. Since $D$ 1-prepares $\{y\in Y_\eta\mid g_\eta(y)\in X_y\}$ and since $X_y = X$ for $y\in Y$, we see that $g_\eta(Y)$ is either contained in $X$ or disjoint from $X$. By our choice of $\eta$, $g_\eta(Y)\cap B_1$ is non-empty. But $B_1$ is neither contained in $X$, nor disjoint from $X$. Hence we conclude that $g_\eta(Y)$ is a proper subset of $B_1$. This implies that
\[
\radopc g_\eta(Y) \leq \radclc B_1 = |x-x'|_c / \lambda.
\]
By the discussion above, $C$ $|1|_c$-prepares $g_\eta(Y)$ and so there exists some $d\in C$ such that
\[
|g_\eta(y_0) - d|_c\lambda \leq \radopc (g_\eta(Y))\lambda \leq |x-x'|_c.
\]
Putting this together and using that $g_\eta(y_0)\in B_1$, we obtain that
\[
|x-d|_c\lambda\leq \max\{|x-g_\eta(y_0)|_c\lambda, |g_\eta(y_0)-d|_c\lambda\}\leq |x-x'|_c.
\]
This completes the proof.
\end{proof}

\subsection{Definitions and basic results}

Let $\cT$ be a theory of valued fields of characteristic zero in a language $\cL$ expanding the language of valued fields $\cL_\val = \{+, \cdot, \cO\}$. In this section we explicitly allow mixed characteristic as well.

\begin{defn}
Let $\ell\geq 0$ be an integer or $\omega$. We say that $\cT$ is $\ell$-\hc-minimal if the following holds. For any model $K$ of $\cT$ and every non-trivial equicharacteristic zero coarsening $|\cdot|_c$ of the norm on $K$, the theory of $K$ in $\cL_c$ is $\ell$-h-minimal with respect to $|\cdot|_c$.
\end{defn}

Let $\ell\geq 1$. Note that if all models of $\cT$ are of equicharacteristic zero, then in view of Theorem \ref{thm: coarsening the valuation} $\ell$-\hc-minimality is equivalent to $\ell$-h-minimality as defined in section \ref{sec:defn.equivalences}. In the next section we will prove that $\ell$-\hc-minimality implies $\ell$-\hmix-minimality. However, we don't know whether the converse is true as well for $\ell\geq 2$. For $\ell=1$, this is part of~\cite[Thm.\,2.2.7]{CHRV}. For this reason, we will not define a notion of $\ell$-h-minimality in mixed characteristic, and use $\ell$-\hc-minimality instead.

Theorem \ref{thm: coarsening the valuation} only applies if $\ell\geq 1$. If $\ell=0$ however, it is not obvious whether $0$-h-minimality and $0$-\hc-minimality are equivalent, even in equicharacteristic zero. This tameness notion lies between 0-h-minimality and 1-h-minimality. 

In fact, 0-\hc-minimality is equivalent to the following ``up to a power preparation''. Assume that all models of $\cT$ have equicharacteristic zero. Then $\cT$ is 0-\hc-minimal if and only if the following holds for every model $K$ of $\cT$. Let $\lambda$ be in $\Gamma_K^\times, \lambda\leq 1$, $A\subset K$, $A'\subset \RV_\lambda$, and let $X$ be $(A\cup \RV\cup A')$-definable. Then there exists an integer $N$ such that $X$ is $\lambda^N$-prepared by a finite $A$-definable set $C\subset K$. It would be interesting to see how much of the theory of Hensel minimality works under this weaker notion of 0-\hc-minimality, compared to 1-h-minimality.


\subsection{Transferring results from equicharacteristic zero.}


%

We recall how one can transfer results from the equicharacteristic zero setting to mixed characteristic, following~\cite{CHRV}. First we show that $\ell$-\hc-minimal theories are also $\ell$-\hmix-minimal.

\begin{cor}
Assume that $\cT$ is $\ell$-\hc-minimal, for some integer $\ell\geq 1$. Then $\cT$ is also $\ell$-\hmix-minimal.
\end{cor}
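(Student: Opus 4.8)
The goal is to show that $\ell$-\hc-minimality implies $\ell$-\hmix-minimality. Recall the definition of $\ell$-\hmix-minimality: in every model $K$, given $\lambda\in\Gamma_K^\times$, $\lambda\le 1$, a subset $A\subset K$, a subset $A'\subset\RV_\lambda$ with $\#A'\le\ell$, and an $(A\cup\RV_n\cup A')$-definable set $X\subset K$, we must find an integer $m\ge 1$ and a finite $A$-definable set $C$ that $|m|\lambda$-prepares $X$. If $K$ already has equicharacteristic zero, then $|m|=1$, $\RV_n=\RV$, and this is just $\ell$-h-minimality, which follows from $\ell$-\hc-minimality by taking the trivial coarsening (or rather, by definition $\ell$-\hc-minimality handles the equicharacteristic zero case via Theorem \ref{thm: coarsening the valuation}). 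So the substance is the genuinely mixed characteristic case.

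The plan is to reduce to the coarsened valuation $|\cdot|_\eqc$, which is the finest equicharacteristic zero coarsening, and apply $\ell$-\hc-minimality there. First I would pass to the language $\cL_\eqc$, which is harmless: adding a predicate for $\cO_{K,\eqc}$ (a pullback of a predicate on $\RV$) preserves $\ell$-\hc-minimality. If the coarsening $|\cdot|_\eqc$ is trivial, then $K$ is already of equicharacteristic zero and there is nothing mixed to handle. So assume it is non-trivial; then $\Th_{\cL_\eqc}(K)$ is $\ell$-h-minimal with respect to $|\cdot|_\eqc$. Now given the data $(\lambda, A, A', X)$ as above with $X$ being $(A\cup\RV_n\cup A')$-definable, note that $\RV_n$, being a quotient of $K^\times$ by $1+|n|\cO_K$, refines $\RV_\eqc$; more precisely $1+|n|\cO_K\subset 1+\cM_{K,\eqc}$, so the map $\RV_n\to\RV_\eqc$ is well-defined. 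Each element of $\RV_n$ lies over a unique element of $\RV_\eqc$, and the fibres are controlled by the residue structure. The key observation is that $\RV_n$ can be coordinatized over $\RV_\eqc$ in a way that is captured by an $\RV_\eqc$-enrichment, so $X$ is, after this enrichment, definable from $A$, $\RV_\eqc$, and the image $A'_\eqc\subset\RV_{\eqc,\lambda_\eqc}$ of $A'$ (where $\lambda_\eqc$ is the image of $\lambda$), with $\#A'_\eqc\le\#A'\le\ell$.

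Concretely, I would argue as follows: the set $X$ depends on finitely many parameters from $\RV_n$ and the $\le\ell$ parameters from $\RV_\lambda$. Parameters from $\RV_n$ are handled because $\RV_n\subset\RV$ is part of the base allowed in the definition, and after coarsening, $\RV$ (with respect to $|\cdot|$) becomes an $\RV_\eqc$-enriched structure — indeed the residue field of $|\cdot|$ relative to $|\cdot|_\eqc$ lives inside $\RV_\eqc$-definable sets, and Theorem \ref{thm: RVexpansions} / Theorem \ref{thm:preservation.RV.expansion} ensures such an $\RV_\eqc$-expansion preserves $\ell$-h-minimality with respect to $|\cdot|_\eqc$. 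So $\Th(K)$ with respect to $|\cdot|_\eqc$, in the enriched language, is still $\ell$-h-minimal, and $X$ is definable over $A$, $\RV_\eqc$, and $\le\ell$ parameters from $\RV_{\eqc,\lambda_\eqc}$. Applying $\ell$-h-minimality with respect to $|\cdot|_\eqc$, we obtain a finite $A$-definable set $C$ that $\lambda_\eqc$-prepares $X$ with respect to $|\cdot|_\eqc$. Unwinding what $\lambda_\eqc$-preparation with respect to the coarsened valuation means in terms of the original valuation: $\rv_{\eqc,\lambda_\eqc}(x-c)$ being constant over $c\in C$ corresponds to $|x-x'|/|x-c|$ being small relative to the convex subgroup, which translates into $|m|\lambda$-preparation with respect to $|\cdot|$ for a suitable positive integer $m$ — exactly as in the translation carried out at the end of the proof of Theorem \ref{thm: coarsening the valuation}, read in reverse. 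This produces the desired $m$ and the finite $A$-definable preparing set $C$, and likely one can take $C$ itself to be the witness after removing $\RV$-parameters via \cite[Cor.\,2.6.10]{CHR}.

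The main obstacle I anticipate is the bookkeeping in the last step: translating $\lambda_\eqc$-preparation with respect to the coarse valuation back to $|m|\lambda$-preparation with respect to the fine valuation, and checking that the integer $m$ can be chosen uniformly (or at least exists). This requires understanding precisely how $\RV_n$ and $\RV_\lambda$ sit over $\RV_{\eqc}$ and $\RV_{\eqc,\lambda_\eqc}$, and that the discrepancy between the fine and coarse radii on a ball $\lambda_\eqc$-next to $C$ is bounded by $|m|$ for some integer $m$ — which holds because the residue characteristic is a fixed prime $p$ and the relevant convex subgroup is generated by $|p|$. A secondary point requiring care is that the $\RV_\eqc$-enrichment encoding of $\RV$ (with respect to $|\cdot|$) is genuinely an enrichment by predicates on powers of $\RV_\eqc$ alone, so that Theorem \ref{thm: RVexpansions} applies; this is standard but should be spelled out. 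This is precisely the argument sketched in \cite[Sec.\,2.2]{CHRV} for $\ell=1$, and it goes through verbatim for general $\ell$ once Theorem \ref{thm: coarsening the valuation} and Theorem \ref{thm: RVexpansions} are available.
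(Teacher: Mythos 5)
Your overall strategy --- coarsen to $|\cdot|_\eqc$, apply $\ell$-h-minimality there, and translate the preparation back --- is exactly the route the paper takes (its proof is a one-line reference to the $\ell=1$ argument of \cite[Cor.\,2.5.5]{CHRV}). But as written your argument has a genuine gap at the very first step. You claim that if the coarsening $|\cdot|_\eqc$ is trivial then $K$ is already of equicharacteristic zero. That is false: the paper itself notes that $|\cdot|_\eqc$ is trivial for $K=\QQ_p$, which has mixed characteristic; the coarsening is trivial whenever the convex subgroup of $\Gamma_K^\times$ generated by $|p|$ is all of $\Gamma_K^\times$. For such models the hypothesis of $\ell$-\hc-minimality, which quantifies only over \emph{non-trivial} equicharacteristic zero coarsenings, gives you nothing directly, so your proof does not cover the most important examples. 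The standard repair is to pass to an $\aleph_0$-saturated elementary extension $K^*\succ K$, where $|\cdot|_\eqc$ is automatically non-trivial, run the argument there, and transfer the conclusion back to $K$ using that, for a fixed integer $m$ and a fixed formula defining $C$, the statement ``$C$ $|m|\lambda$-prepares $X$'' is first order (\cite[Lem.\,2.4.2]{CHR}). This step must be made explicit.

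The same saturation is needed, for the same reason, at the point you flag as ``the main obstacle'': extracting the integer $m$. If $C$ $\lambda_\eqc$-prepares $X$ with respect to $|\cdot|_\eqc$ and $x\in X$, $x'\notin X$, you obtain some $c\in C$ with $|x-x'|_\eqc\geq\lambda_\eqc|x-c|_\eqc$, hence some integer $m$ with $|x-x'|\geq|m|\lambda|x-c|$ --- but this $m$ depends on the pair $(x,x')$. A uniform $m$ is obtained by contradiction: if no single $m$ works, saturation produces one pair lying in the same ball $\lambda_\eqc$-next to $C$ yet separated by $X$, contradicting the coarse preparation. This is not the computation at the end of the proof of Theorem~\ref{thm: coarsening the valuation} ``read in reverse''; it is a compactness argument in the opposite direction, analogous to the one used in Corollary~\ref{cor:mixed.char.product.prep}. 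Finally, a small slip: since $\cM_{K,\eqc}\subset B_{<|n|}(0)$, the canonical surjection goes $\RV_\eqc\twoheadrightarrow\RV_n$, not $\RV_n\to\RV_\eqc$ as you wrote; fortunately this is the direction you actually need, as it lets you lift $\RV_n$-parameters to $\RV_\eqc$-parameters, and no genuine $\RV_\eqc$-enrichment is required because $\ell$-\hc-minimality is already stated for the full language $\cL_\eqc$.
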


\begin{proof}
The argument is identical to~\cite[Cor.\,2.5.5]{CHRV}.
\end{proof}

We expect that $\ell$-\hc-minimality is in fact equivalent to $\ell$-\hmix-minimality. Then we could call it simply $\ell$-h-minimality. For $\ell=1$, this is proven in \cite[Thm.\,2.2.7]{CHRV}. By the usual compactness, this implies a family preparation result under $\ell$-\hc-minimality which was already stated in~\cite{CHRV}.

\begin{lem}[{\cite[Cor.\,2.3.8]{CHRV}}]\label{lem:uniform.prep.ellhmin}
Let $\cT$ be $\ell$-\hc-minimal for some positive integer $\ell\geq 1$, and let $K$ be a model of $\cT$. Let $\lambda$ be in $\Gamma_K^\times, \lambda\leq 1$, let $n,M$ be positive integers and let
\[
W\subset K\times \RV_M^n \times \RV_\lambda^\ell
\]
be $\emptyset$-definable. Then there exists a finite $\emptyset$-definable set $C\subset K$, and a positive integer $N$ such that for every ball $B$ which is $\lambda|N|$-next to $C$, the fibre $W_x\subset \RV_M\times \RV_\lambda^\ell$ does not depend on $x$ as $x$ runs over $B$.
\end{lem}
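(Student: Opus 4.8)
The plan is to deduce this uniform preparation statement from the (single-object) definition of $\ell$-\hmix-minimality by a standard compactness argument, exactly as one does for ordinary uniform preparation under $\ell$-h-minimality (cf.~\cite[Prop.\,2.6.2]{CHR} and~\cite[Cor.\,2.3.8]{CHRV}). First I would invoke the previous corollary: since $\cT$ is $\ell$-\hc-minimal, it is $\ell$-\hmix-minimal, so in every model $K$ the single-set preparation statement from the definition of $\ell$-\hmix-minimality is available. The point is then to turn the parametrised statement ``for each tuple of parameters from $\RV_M^n$ the fibre is preparable'' into a single finite set $C$ and a single integer $N$ working uniformly.

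The key steps, in order, are as follows. Fix $K \models \cT$ and $\lambda \in \Gamma_K^\times$, $\lambda \le 1$. For each fixed $\sigma \in \RV_M^n$, view the fibre $W_\sigma \subset K \times \RV_\lambda^\ell$ as defining, for each $\tau \in \RV_\lambda^\ell$, a subset $W_{\sigma,\tau} \subset K$; more efficiently, regard $W_\sigma$ itself as an $(\sigma)$-definable subset of $K$ together with $\le \ell$ parameters from $\RV_\lambda$, i.e. as a set definable over $A = \emptyset$, $A' = \{\sigma\} \subset \RV$ (note $\RV_M$ maps to $\RV$ — or one keeps $\RV_M$ as allowed in the definition), and $A'' \subset \RV_\lambda$ with $\#A'' \le \ell$. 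By $\ell$-\hmix-minimality applied with this data, there is an integer $m_\sigma \ge 1$ and a finite $\emptyset$-definable set $C_\sigma \subset K$ such that $W_\sigma$ is $|m_\sigma|\lambda$-prepared by $C_\sigma$. Now run the usual compactness/definability-of-preparation argument: the property ``$C$ is a finite set of size $\le s$ that $|N|\lambda$-prepares $W_\sigma$ for all $\sigma$'' is, for fixed bounds, a definable (in fact first-order expressible, using that preparation is a definable condition~\cite[Lem.\,2.4.2]{CHR}) condition on the tuple defining $C$. Since for each $\sigma$ some such $C$ exists, compactness over the (imaginary) parameter space $\RV_M^n$ — which is handled exactly as in the cited family-preparation results — yields a single bound $s$ on $\#C$, a single integer $N$ (taken as a common multiple of enough of the $m_\sigma$, or produced directly by compactness), and a single finite $\emptyset$-definable set $C$ of size $\le s$ such that $C$ $\,|N|\lambda$-prepares $W_\sigma$ for every $\sigma \in \RV_M^n$. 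Unwinding, this says precisely that for every ball $B$ which is $\lambda|N|$-next to $C$, and for every $\sigma$, whether $x \in W_{\sigma,\tau}$ depends only on the tuple $(\rv_{|N|\lambda}(x-c))_{c\in C}$, hence the fibre $W_x \subset \RV_M^n \times \RV_\lambda^\ell$ is constant as $x$ ranges over $B$.

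The main obstacle — really the only subtlety — is the passage through the imaginary sort $\RV_M^n$ in the compactness step: one must make sure that ``there is a uniform finite $C$ and uniform $N$'' can be extracted, rather than just a $\sigma$-by-$\sigma$ family. This is handled precisely as in~\cite[Prop.\,2.6.2]{CHR}: one works in an $|\RV_M^n|^+$-saturated elementary extension, uses that the relevant preparation statement for a single $\sigma$ is expressible by a formula whose parameters are $\sigma$ together with the (bounded-length) tuple coding $C$, and then applies compactness plus the fact that $\RV$-unions of finite definable sets stay finite~\cite[Cor.\,2.6.7]{CHR} to glue. Everything else is bookkeeping: matching the indices $|m|\lambda$ versus $\lambda|N|$, and noting $\RV_\lambda$-parameters behave well under the coarsening/mixed-characteristic conventions already set up in~\cite{CHRV}. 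Hence the lemma follows, as claimed in~\cite[Cor.\,2.3.8]{CHRV}.
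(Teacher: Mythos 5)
Your proposal is correct and takes essentially the same route as the paper, which gives no independent proof: it states the lemma as a citation of \cite[Cor.\,2.3.8]{CHRV} and justifies it only by the immediately preceding corollary (that $\ell$-\hc-minimality implies $\ell$-\hmix-minimality) together with the phrase ``by the usual compactness''. What you have written is precisely that standard compactness argument in the style of \cite[Prop.\,2.6.2]{CHR}, so no further comment is needed.
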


For use in the next section, we mention a little lemma about lifting maps $\chi: K^n\to \RV_N$ to $\chi: K^n\to \RV_M$ for some $|M|\leq |N|$.

\begin{lem}\label{lem:enlarging.integer}
Let $\cT$ be $\ell$-\hc-minimal for some positive integer $\ell\geq 1$, and let $K$ be a model of $\cT$. Let $\lambda$ be in $\Gamma_K^\times, \lambda\leq 1$, let $M_1, \ldots, M_r, M_1', \ldots, M_\ell'$ be positive integers, and let
\[
\chi: K^n\to \RV_{M_1}\times \ldots \times \RV_{M_r}\times \RV_{|M_1'|\lambda}\times \ldots \RV_{|M_\ell'|\lambda}
\]
be $\emptyset$-definable. Then there is a positive integer $M$ and a $\emptyset$-definable map $\chi': K^n\to \RV_M^r\times \RV_{|M|\lambda}^\ell$ such that $\chi = p\circ \chi'$, where 
\[
p: \RV_{M_1}\times \ldots \RV_{M_r}\times \RV_{|M_1'|\lambda}\times \ldots \RV_{|M_\ell'|\lambda}\to \RV_M^r\times \RV_{|M|\lambda}^\ell
\]
is the natural surjection.
\end{lem}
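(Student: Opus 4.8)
The statement is really a matter of bookkeeping with leading-term structures, combined with the basic fact that for two nonzero integers $M_1, M_2$ one has $\RV_{M_1 M_2}$ mapping onto both $\RV_{M_1}$ and $\RV_{M_2}$ (since $|M_1 M_2| \leq |M_i|$, so $B_{<|M_1M_2|}(0) \subset B_{<|M_i|}(0)$, giving a surjection $\RV_{M_1 M_2} \to \RV_{M_i}$, and likewise $\RV_{|M_1 M_2|\lambda} \to \RV_{|M_i|\lambda}$). The first step is to pick $M$ to be (a multiple of) $\mathrm{lcm}(M_1, \ldots, M_r, M_1', \ldots, M_\ell')$, or simply the product of all these integers; then each of the factor maps $\RV_M \to \RV_{M_i}$ and $\RV_{|M|\lambda} \to \RV_{|M_j'|\lambda}$ exists and is $\emptyset$-definable (it is induced by $\rv$-maps, which are part of the structure once we remember that $\RV_{M}$ and the transition maps between leading-term structures are interpretable). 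Denote by $p$ the product of these surjections, exactly as in the statement.

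The second step is to produce $\chi'$. The naive hope ``lift $\chi$ through $p$'' does not literally work because $p$ need not admit a definable section, but we do not need a section: we only need a single $\chi'$ whose composite with $p$ is $\chi$. The trick is to build $\chi'$ directly from the data defining $\chi$ rather than by lifting. Concretely, $\chi$ is a $\emptyset$-definable function, so for each coordinate of its target we have a definable map $K^n \to \RV_{M_i}$ (resp.\ $K^n \to \RV_{|M_j'|\lambda}$); the point is that each such map can itself be \emph{refined} to a map into the finer structure $\RV_M$ (resp.\ $\RV_{|M|\lambda}$) simply by replacing it with the honest $\rv_M$-information of a representative. Here is where $\ell$-\hc-minimality enters: apply Lemma~\ref{lem:uniform.prep.ellhmin} (family preparation) to the graph of $\chi$, viewed as a $\emptyset$-definable subset of $K^n \times \RV_{M_1}\times \cdots$, to obtain, for suitable $N$, a $\emptyset$-definable cell decomposition of $K^n$ into twisted boxes on each of which $\chi$ is constant; absorbing $N$ into $M$ (enlarge $M$ to be divisible by $N$ too), we get that on each cell of this decomposition the finer $\rv_M$-coordinates of the cell's twisted-box data are determined, and we let $\chi'$ record, on each cell, the value of $\chi$ together with the $\RV_M$-/$\RV_{|M|\lambda}$-labels of that cell. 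By construction $p\circ\chi' = \chi$: forgetting the finer labels recovers exactly $\chi$.

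The third and final step is just to check that $\chi'$ is genuinely $\emptyset$-definable with target $\RV_M^r \times \RV_{|M|\lambda}^\ell$ and that the identity $\chi = p\circ \chi'$ holds pointwise; both are immediate from the construction, since the cell decomposition and all the labels are $\emptyset$-definable (no parameters were introduced) and $p$ was defined precisely as the coordinatewise forgetful surjection.

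\textbf{Main obstacle.} The only subtle point is that $p$ has no definable section, so one must resist the temptation to ``lift'' and instead reconstruct $\chi'$ from a $\emptyset$-definable refinement of the domain; the role of $\ell$-\hc-minimality is exactly to supply, via Lemma~\ref{lem:uniform.prep.ellhmin}, such a refinement uniformly and over $\RV$-parameters, and to control the integer $N$ that then gets folded into $M$. Everything else is elementary manipulation of surjections between leading-term structures. (In fact, for many purposes one could even take $\chi'$ to be any definable function with $p\circ\chi'=\chi$ obtained by a definable choice on the finitely many fibers of $p$ over the finite image-relevant data, but phrasing it through cell decomposition makes the uniformity transparent and matches how the lemma is used in the next section.)
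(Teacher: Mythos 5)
Your overall strategy matches the paper's in outline (take $M$ to be a common multiple of all the $M_i, M_j'$, then invoke family preparation under $\ell$-\hc-minimality), and you correctly identify the central obstacle, namely that $p$ has no definable section. But the construction you offer does not actually overcome that obstacle. Letting $\chi'$ ``record the value of $\chi$ together with the $\RV_M$-labels of the cell'' produces a map into $\bigl(\prod_i \RV_{M_i}\times\prod_j\RV_{|M_j'|\lambda}\bigr)\times\RV_{|N|\lambda}^{k}$, not into $\RV_M^r\times\RV_{|M|\lambda}^\ell$; ``forgetting the finer labels'' is projection onto the first factor, which is not the coordinatewise surjection $p$ of the statement, so the identity $\chi=p\circ\chi'$ does not even typecheck. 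What the lemma demands is a genuine definable lift of each coordinate of $\chi$ through $\RV_M\to\RV_{M_i}$ (resp.\ $\RV_{|M|\lambda}\to\RV_{|M_j'|\lambda}$), and your remark that one can take ``the honest $\rv_M$-information of a representative'' is exactly the definable-choice problem in disguise. The paper's mechanism is different: it prepares the full preimage set $W=\{(x,\xi)\mid \chi(x)=p(\xi)\}\subset K\times\RV_{M'}^r\times\RV_{|M'|\lambda}^\ell$ uniformly via Lemma~\ref{lem:uniform.prep.ellhmin} and then extracts the lift by appealing to \cite[Lem.\,2.3.1(5)]{CHRV}; that cited lemma is doing the real work that your proof skips.

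A second, smaller gap: Lemma~\ref{lem:uniform.prep.ellhmin} is a one-variable statement (it prepares subsets of $K\times\RV_M^n\times\RV_\lambda^\ell$ by a finite subset of $K$), so you cannot apply it to the graph of $\chi$ inside $K^n\times\cdots$ for $n>1$ to get a cell decomposition of $K^n$. The multi-variable analogue is Proposition~\ref{prop:uniform}, which is proved later and whose proof uses the present lemma, so invoking it here would be circular. The paper instead handles $n>1$ by induction on $n$: apply the case $n=1$ fibrewise to the family $(\chi_x)_{x\in K}$, use compactness to make the resulting family of lifts $\emptyset$-definable with a single $M$, and set $\chi'(x,y)=\chi'_x(y)$. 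Your write-up needs both of these ingredients made explicit.
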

\begin{proof}
We induct on $n$. For $n=1$, take $M' = M_1\cdots M_rM_1'\cdots M_\ell'$ and let
\[
W = \{ (x,\xi)\in K\times \RV_{M'}^r\times \RV_{|M'|\lambda}^\ell \mid \chi(x) = p(\xi)\},
\]
where $p$ is the natural projection as above but with domain $\RV_{M'}^r\times \RV_{|M'|\lambda}^\ell$. Using Lemma~\ref{lem:uniform.prep.ellhmin} we obtain a finite $\emptyset$-definable set $C\subset K$ which uniformly $\lambda|N|$-prepares $W$. Now use~\cite[Lem.\,2.3.1(5)]{CHRV} to obtain the desired map $\chi'$ (this might also change $M'$).

For $n>1$, we may use compactness to find a $\emptyset$-definable family of maps $\chi'_x: K^{n-1}\to \RV_M^r\times \RV_{|M|\lambda}^\ell$ for $x\in K$ which satisfy the conclusion of the lemma for each map $\chi_x: K^{n-1} \to \prod_i \RV_{M_i} \times \prod_i \RV_{|M_i'|\lambda}$. But then the map $(x,y)\mapsto \chi'_x(y)$ is as desired.
\end{proof}

To transfer a cell decomposition from equichracteristic zero to mixed characteristic, see Lemma \ref{lem:celldecomp.Kecc.to.K}.

\section{Higher-dimensional preparation results}\label{sec:higher.dimension}

In this section we further develop higher dimensional geometry under $\ell$-h-minimality. The general philosophy is that $\ell$-h-minimality suffices to prepare objects which are at most $\ell$-dimensional in a controlled way. Note in particular, for $\omega$-h-minimality we obtain such results in all dimensions. Throughout, let $\cT$ be a theory of valued fields of characteristic zero, in a language $\cL$ expanding the language of valued fields. We state results also in mixed characteristic, but it should be noted that many results simplify in equicharacteristic zero.

\subsection{Uniform preparation and cell decompositions}

Here is a first result on preparation in higher dimensions, generalizing \cite[Prop.\,2.6.2]{CHR} and \cite[Prop.\,2.3.2]{CHRV}.

\begin{def-prop}[Preparing higher dimensional families]\label{prop:uniform}
Assume that $\Th(K)$ is $(L+\ell)$-\hc-minimal, for some integers $L\ge 0$ and $\ell\ge 0$. Let $N>0$ be an integer and $\lambda \leq 1$ in $\Gamma_K^\times$. For any integer $k\ge 0$ and any $\emptyset$-definable set
$$
W\subset K^{L+1}\times \RV_{N}^k\times \RV_{\lambda}^\ell,
$$
there exists an integer $M$ and a $\emptyset$-definable function $\chi: K^{L+1}\to \RV_{M}^m\times \RV_{|M|\lambda}^{L+1}$ for some $m\ge 0, M>0$, such that for every fiber $F = \chi^{-1}(\eta)$ with $\eta\in \RV_{M}^m\times \RV_{|M|\lambda}^{L+1}$, the set $W_{x,\lambda} := \{\xi \in \RV_N^k\times \RV_{\lambda}^\ell \mid (x,\xi)\in W\}$ does not depends on $x$ when $x$ runs over $F$. In other words, for all $x,x'\in F$, one has $W_{x,\lambda} = W_{x',\lambda}$. We say that $\chi$ \emph{uniformly prepares} $W$.
\end{def-prop}

\begin{proof}
We induct on $L$. The base case $L=0$ follows from Lemma \ref{lem:uniform.prep.ellhmin} combined with \cite[Lem.\,2.3.1(5)]{CHRV}. So assume that $L>0$. Consider $W$ as a $\emptyset$-definable family $(W_y)_y$ with 
\[
W_y\subset K \times \RV_N^k \times \RV_\lambda^{\ell}
\]
where $y$ runs over $K^L$. By the base case and compactness, there exists integers $M', m'$ and a $\emptyset$-definable family $(\chi_y)_{y\in K^L}$ of functions 
\[
\chi_y: K\to \RV_{M'}^{m'}\times \RV_{|M'|\lambda}
\]
preparing the set $W_y$, i.e.\ any fibre of $\chi_y$ is either contained in or disjoint from $W_{y, \xi}\subset K$ for any $\xi\in \RV_N^k\times \RV_\lambda^\ell$. Consider the set 
\[
V = \{(y, \eta, \xi)\in K^L\times \RV^{m'}_{M'}\times \RV_{|M'|\lambda}\times \RV_N^k\times \RV_\lambda^\ell \mid \chi^{-1}_y(\eta)\subset W_{y, \xi}\}.
\]
By induction on $L$ there exists a $\emptyset$-definable function $\chi': K^L\to \RV^{m''}_{M''}\times \RV_{|M''|\lambda}^L$ which uniformly $\lambda$-prepares $V$. By Lemma \ref{lem:enlarging.integer} we may assume that $M'=M''=M$. Define
\[
\chi: K^{L+1}\to \RV_M^{m''}\times \RV_{|M|\lambda}^{L} \times \RV_M^{m'}\times \RV_{|M|\lambda}: (y,x)\mapsto (\chi'(y), \chi_y(x)).
\]
We claim that this map is as desired. Let $(y,x)$ and $(y',x')$ be in the same fibre of $\chi$ and put $\eta=\chi_y(x)=\chi_{y'}(x')$. Suppose that $(y,x,\xi)\in W$ for some $\xi\in \RV_N^k\times \RV_\lambda^\ell$. Then $(y,\eta, \xi)$ is in $V$. Since $\chi'(y)=\chi'(y')$ also $(y',\eta,\xi)$ is in $V$. This in turn implies that $(y',x',\xi)\in W$. We are done by symmetry.
\end{proof}

One may restate the above result using cell decompositions, as long as $\acl=\dcl$. For $\lambda\in \Gamma_K^\times, \lambda\leq 1$, define $0\cdot \RV_\lambda^\times  = \{0\}$ and $1\cdot\RV_\lambda^\times = \RV_\lambda^\times$.

\begin{defn}[{\cite[Def.\,5.2.2]{CHR}, \cite[Def.\,3.3.1]{CHRV}}]
Let $A\subset K\cup \RV$ be a parameter set, $N$ a positive integer and take $j_1, \ldots, j_n$ in $\{0,1\}$. Then an \emph{$A$-definable cell $X$ of type $(j_1, \ldots, j_n)$ of depth $N$} is a set of the form
\[
X = \{x\in K^n\mid \left( \rv_N(x - c_i(\pi_{<i}(x))\right)_{i=1}^n \in R\},
\]
where the functions $c_i: \pi_{<i}(X)\to K$ are $A$-definable and $R\subset \prod_i j_i\cdot \RV_N^\times$ (which is automatically $A$-definable). The $c_i$ are the \emph{cell centers of $X$}. For $\lambda \leq |N|$ in $\Gamma_K^\times$, a $\lambda$-twisted box of $X$ is a set of the form
\[
\{x\in K\mid \left( \rv_\lambda(x - c_i(\pi_{<i}(x))\right)_{i=1}^n = (\xi_i)_{i=1}^n\},
\]
where $\xi_i\in \RV_\lambda$ lies above an element of $R$ under the natural map $\RV_\lambda^n\to \RV_N^n$.

An $A$-definable \emph{cell decomposition $\cA$ of $K^n$ of depth $N$} is a finite partition of $K^n$ into $A$-definable cells of depth $N$.
\end{defn}

\begin{cor}
Assume that $\Th(K)$ is $(L+\ell)$-\hc-minimal and has $\acl=\dcl$, for some integers $L\ge 0$ and $\ell\ge 0$. Let $\lambda \leq 1$ in $\Gamma_K^\times$. For any $\emptyset$-definable set
\[
W\subset K^{L+1}\times \RV_M^k\times \RV_{\lambda}^\ell,
\]
there exists a cell decomposition $\cA$ of $K^{L+1}$ of depth $N$ (for some positive integer $N$) such that for every $|N|\lambda$-twisted box $F$ of $\cA$, the set $W_x$ is independent of $x$ as $x$ runs over $F$.
\end{cor}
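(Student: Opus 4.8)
The strategy is to deduce this corollary from Proposition-Definition~\ref{prop:uniform} by upgrading the preparing function $\chi$ to an honest cell decomposition, using that $\acl=\dcl$. First I would apply Proposition-Definition~\ref{prop:uniform} to the given set $W\subset K^{L+1}\times \RV_M^k\times \RV_\lambda^\ell$ (after possibly enlarging $M$ so that it plays the role of the ``$N$'' in that proposition), obtaining an integer $M'$ and a $\emptyset$-definable function $\chi: K^{L+1}\to \RV_{M'}^m\times \RV_{|M'|\lambda}^{L+1}$ such that $W_{x,\lambda}$ is constant on each fibre $\chi^{-1}(\eta)$. Set $N=M'$. The point is now that the fibres of $\chi$ are, up to grouping, already $|N|\lambda$-twisted boxes: writing $\chi = (\chi_1,\dots,\chi_{L+1})$ coordinate-by-coordinate along the recursive construction in the proof of Proposition-Definition~\ref{prop:uniform}, each $\chi_i$ records $\rv$ of $x_i$ minus a center depending on the earlier coordinates, together with some extra $\RV_{M'}$-data; the fibres refine a partition into sets of the prescribed twisted-box shape.

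The key step is to turn this into a genuine $A$-definable cell decomposition of depth $N$ in the sense of the definition just above: a \emph{finite} partition into cells, each cell being cut out by centers $c_i$ and a set $R\subset \prod_i j_i\cdot\RV_N^\times$. Here is where $\acl=\dcl$ enters. The fibres of $\chi$ are indexed by points $\eta\in \RV_{M'}^m\times\RV_{|M'|\lambda}^{L+1}$, and the centers of the twisted boxes inside a fibre depend definably on $\eta$; this is an $\RV$-parametrized family of finite sets of candidate centers (in each coordinate, after fixing the earlier ones). Using $\acl=\dcl$ exactly as in the passage from Proposition-Definition~\ref{prop:uniform} to cell decompositions in \cite[Thm.\,5.2.4]{CHR} (and as invoked, e.g., in the proof of Theorem~\ref{thm: RVexpansions} and Lemma~\ref{lem: Tell,D to hmixmin}), one can choose finitely many $\emptyset$-definable center functions $c_i: \pi_{<i}(X)\to K$ realizing all the needed centers, so that grouping the $\chi$-fibres according to which centers they use and which sign pattern $(j_1,\dots,j_n)\in\{0,1\}^{L+1}$ (whether $x_i=c_i$ or not) they realize produces a finite partition $\cA$ of $K^{L+1}$ into cells of depth $N$. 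By construction each $|N|\lambda$-twisted box of a cell of $\cA$ is contained in a single fibre $\chi^{-1}(\eta)$ and shares its $\rv_{|N|\lambda}$-coordinates, so $W_x$ is constant on it.

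The main obstacle is bookkeeping rather than conceptual: one must check that the recursive output of Proposition-Definition~\ref{prop:uniform}, which is a product of $\RV_{M'}$-factors (not a priori of the form $\prod_i j_i\cdot\RV_N^\times$ with centers), can be reorganized into the rigid cell format without losing finiteness of the partition, and that the uniformity in the $\RV$-parameter $\eta$ survives the application of $\acl=\dcl$. This is precisely the content of \cite[Thm.\,5.2.4, Add.\,1]{CHR} under $(L+\ell)$-\hc-minimality (which implies $(L+\ell)$-\hmix-minimality, hence in particular $0$- and $1$-h-minimality in the relevant sense, so cell decomposition is available); one simply feeds the function $\chi$ into that machinery. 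No new ideas are needed beyond carefully matching the depth parameter $N=M'$ and the twist level $|N|\lambda$ on both sides.
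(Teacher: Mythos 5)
Your proposal is correct and follows essentially the same route as the paper, whose own proof is just the one-line remark that the argument is ``similar to the previous proposition, see also [CHR, Thm.\,5.2.4 Add.\,1]'': namely, run the uniform preparation of Proposition-Definition~\ref{prop:uniform} and use $\acl=\dcl$ together with the cell-decomposition machinery of [CHR, Thm.\,5.2.4, Add.\,1] to realize the $\RV$-parametrized centers by finitely many $\emptyset$-definable center functions, yielding a genuine finite cell decomposition of depth $N=M'$ on whose $|N|\lambda$-twisted boxes $W_x$ is constant. The bookkeeping issues you flag (reorganizing the product of $\RV_{M'}$-factors into the rigid cell format, matching the depth and twist level) are exactly the content of the cited addendum, so no further ideas are needed.
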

\begin{proof}
This is similar to the previous proposition, see also \cite[Thm.\,5.2.4 Add.\,1]{CHR}.
\end{proof}

In transferring results from equicharacteristic zero to mixed characteristic, the following lemma is frequently useful. 

Let $\cA$ be a cell decomposition of $K$ of depth $N$. By a $|1|_\eqc$-twisted box of $\cA$ we mean the following. Let $(c_1, \ldots, c_n)$ be the cell center of some twisted box of $\cA$ of the form
\[
\{x\in K^n\mid (\rv_{N}(x_i-c_i(\pi_{<i}(x))))_{i=1}^n = \xi\},
\]
for some $\xi\in \RV_{N}^n$. Then a $|1|_\eqc$-twisted box of $\cA$ is a set of the form
\[
\{x\in K^n\mid (\rv_{\eqc}(x_i-c_i(\pi_{<i}(x))))_{i=1}^n = \xi'\},
\]
for any $\xi'\in \RV_\eqc^n$ reducing to $\xi$ under the natural projection $\RV_\eqc\to \RV_{N}$.

\begin{lem}\label{lem:celldecomp.Kecc.to.K}
Assume $\Th_\cL(K)$ is 1-\hc-minimal and that $\acl=\dcl$. Let $\cA'$ be an $\cL_\eqc$-definable cell decomposition of $K^n_\eqc$. Then there exists an integer $N$, and an $\cL$-definable cell decomposition $\cA$ of $K^n$ of depth $N$ such that every $|1|_\eqc$-twisted box of $\cA$ is contained in a twisted box of $\cA'$.
\end{lem}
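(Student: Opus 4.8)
The plan is to transfer the $\cL_\eqc$-definable cell decomposition $\cA'$ of $K^n_\eqc$ to an $\cL$-definable cell decomposition $\cA$ of $K^n$ of some finite depth $N$, by running essentially the same induction on $n$ that underlies cell decomposition in \cite{CHR}, but keeping track of the coarsening. The key observation is that the coarsened valuation $|\cdot|_\eqc$ is a pullback of a predicate on $\RV$ (namely the predicate cutting out $\cO_{K,\eqc}$), so that all $\cL_\eqc$-definable data we encounter can be treated via $\RV_\bullet$-parameters for the fine valuation, and the finitely many objects we must prepare are $\cL$-definable after encoding the coarsened structure through $\RV$. Thus the cell centers $c_i$ of the cells of $\cA'$ — which are $\cL_\eqc$-definable functions $\pi_{<i}(X')\to K$ — can be viewed as $\cL$-definable families, and since $\acl=\dcl$ we may choose them genuinely $\cL$-definable on each piece.

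First I would reduce to $n=1$: as usual one induces on $n$, splitting $\cA'$ fibrewise over the first coordinate and applying the $(n-1)$-dimensional statement uniformly, then reassembling. For $n=1$, the cell decomposition $\cA'$ of $K_\eqc$ is a finite partition into cells of the form $\{x \mid \rv_\eqc(x-c)\in R\}$ for finitely many $\cL_\eqc$-definable centers $c$ and $R\subset \RV_\eqc$. Collect all these centers into a single finite $\cL$-definable set $C\subset K$ (using $\acl=\dcl$ to make them $\cL$-definable, and that $\RV$-unions stay finite, \cite[Cor.\,2.6.7]{CHR}). Now the point is to $1$-prepare, by an $\cL$-definable finite set of depth $N$, the finitely many $\cL_\eqc$-definable sets defining membership of the cells of $\cA'$. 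Concretely, each such set is determined by the tuple $(\rv_\eqc(x-c))_{c\in C}$, hence by $(\rv_\bullet(x-c))_{c\in C}$ at fine level; using that $\Th_\cL(K)$ is $1$-\hc-minimal, uniform preparation (Lemma~\ref{lem:uniform.prep.ellhmin}) gives a finite $\cL$-definable $C^*\supseteq C$ and a positive integer $N$ so that for every ball $B$ which is $|N|$-next to $C^*$, the tuple $(\rv_\eqc(x-c))_{c\in C^*}$ — and therefore the cell of $\cA'$ containing $x$ — is constant on $B$. Then Lemma~\ref{lem:decomp.dim.one} (or directly the cell-decomposition statement \cite[Thm.\,5.2.4 Add.\,1]{CHR} in the form used in the Corollary above) yields an $\cL$-definable cell decomposition $\cA$ of $K$ of depth $N$ all of whose twisted boxes are $|N|$-next to $C^*$; such a twisted box is then contained in a single cell of $\cA'$, and hence so is any $|1|_\eqc$-twisted box of $\cA$, since passing from a twisted box of depth $N$ to a $|1|_\eqc$-twisted box only shrinks it.

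I expect the main obstacle to be the bookkeeping in the inductive step: making the depth $N$ and the integer parameters uniform across the fibrewise construction, so that one can glue the fibrewise cell decompositions of $K$ into a genuine cell decomposition of $K^n$ of a single depth. This is exactly the kind of issue handled by Lemma~\ref{lem:enlarging.integer} (lifting maps into $\RV_{M_i}$ to a common $\RV_M$) together with compactness, so the argument is the same as in the proof of Proposition~\ref{prop:uniform} and the Corollary following it; the only genuinely new input is the reduction of the $\cL_\eqc$-data to $\cL$-data via $\acl=\dcl$ and the fact that $\cO_{K,\eqc}$ is an $\RV$-pullback, already used in the proof of Theorem~\ref{thm: coarsening the valuation}. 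Once that reduction is in place, the rest is a routine adaptation of one-dimensional preparation under $1$-\hc-minimality.
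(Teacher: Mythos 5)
Your core strategy is the same as the paper's: since $\cO_{K,\eqc}$ is an $\RV$-pullback, $\cL_\eqc$ is an $\RV$-expansion of $\cL$, so by \cite[Cor.\,4.1.17]{CHR} each cell center $c_i(\pi_{<i}(x))$ of $\cA'$ lies in a finite $\cL(\pi_{<i}(x))$-definable set, and $\acl=\dcl$ then promotes these to genuine $\cL$-definable candidate centers for $\cA$. However, the one-dimensional preparation step you interpose is false. There is no finite $\cL$-definable $C^*$ and positive integer $N$ for which the tuple $(\rv_\eqc(x-c))_{c\in C^*}$ is constant on every ball $|N|$-next to $C^*$: such a ball $B$ is an open ball of radius $|N|\min_{c\in C^*}|x_0-c|$ around any $x_0\in B$, so (in the only interesting case, where the coarsening is non-trivial and $|p|<1$) it contains distinct points $x,x'$ with $|x-x'|=|Np|\,|x-c|$ for the nearest center $c$; since $|Np|$ lies in the convex subgroup $H$ by which one coarsens, this gives $|x-x'|_\eqc=|x-c|_\eqc$ and hence $\rv_\eqc(x-c)\neq\rv_\eqc(x'-c)$. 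Equivalently, every ball $|N|$-next to $c$ splits into infinitely many balls $|1|_\eqc$-next to $c$, so no preparation at a finite depth can freeze $\rv_\eqc(\cdot-c)$, and Lemma~\ref{lem:uniform.prep.ellhmin} cannot deliver what you ask of it. A second, related slippage: your final sentence only yields containment of each twisted box of $\cA$ in a single \emph{cell} of $\cA'$, whereas the lemma requires containment of each $|1|_\eqc$-twisted box in a single \emph{twisted box} of $\cA'$, which is strictly smaller (a cell is the union of its twisted boxes over all values in $R$).

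Both issues disappear if you stop trying to prepare at depth $|N|$ and instead argue directly at the $|1|_\eqc$-level, which is what the paper's (terse) proof does. Once the centers of all cells of $\cA'$ occur, fibrewise, among the $\cL$-definable candidate centers of $\cA$, every $|1|_\eqc$-twisted box of $\cA$ avoids the graphs of the $c_i$; and an open ball $B$ for $|\cdot|_\eqc$ with $c\notin B$ automatically satisfies $|x-x'|_\eqc<|x-c|_\eqc$ for all $x,x'\in B$, i.e.\ $\rv_\eqc(\cdot-c)$ is constant on $B$. Applying this coordinate by coordinate, each $|1|_\eqc$-twisted box of $\cA$ has constant tuple $(\rv_\eqc(x_i-c_i(\pi_{<i}(x))))_i$ and therefore sits inside a single twisted box of $\cA'$ — no appeal to uniform preparation is needed, only \cite[Cor.\,4.1.17]{CHR}, $\acl=\dcl$, and the existence of an $\cL$-definable cell decomposition of some depth $N$ with the prescribed candidate centers. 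With that correction your fibrewise induction on $n$ is the standard bookkeeping and matches the paper's one-cell-at-a-time treatment.
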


\begin{proof}
We can do this one cell at a time, so let $A$ be an $\cL_\eqc$-definable cell with cell center $(c_1, \ldots, c_n)$. By \cite[Cor.\,4.1.17]{CHR} (which holds in our context by \cite[Prop.\,2.6.4]{CHRV}), we have that $c_i(\pi_{<i}(x))$ is always $\cL(\pi_{<i}(x))$-definable. Let $C_i(\pi_{<i}(x))$ be an $\cL$-definable family of finite sets containing $c_i(\pi_{<i}(x))$, with parameter $\pi_{<i}(x)\in K^{i-1}$. Using $\acl=\dcl$ in $\cL$, we can find several $\cL$-definable cells of depth $|N|$, whose cell center is among the $C_i$. This will be the desired cell decomposition.
\end{proof}

\subsection{Compatible domain and image preparation}

Let $f: K\to K$ be a $\emptyset$-definable function. In \cite[Prop.\,2.8.6]{CHR} it is proven that under 1-h-minimality one may compatibly prepare the domain and the image of $f$. In more detail, given a finite $\emptyset$-definable set $C_0$, there exist finite $\emptyset$-definable sets $C\supset C_0$, and $D$ such that for every ball $B$ 1-next to $C$, $f(C)$ is either a ball 1-next to $D$ or contained in $D$. Using cell decompositions, one may even state such a result for finitely many maps $f_i: K\to K$ at once. We extend this result to higher dimensions, subject to $\ell$-h-minimality. If we only assume $1$-h-minimality, then we prove a weaker version of this result. We will assume that $\acl=\dcl$, which is no real loss in view of Corollary \ref{cor: acl=dcl by RVexpansion}. 

\begin{thm}\label{thm.f.ell.III}
Suppose that $K$ is of equicharacteristic zero, and carries an $\ell$-h-minimal structure for some integer $\ell>0$ with $\acl=\dcl$. Let $f_1, ..., f_r: K^\ell\to K$ be $\emptyset$-definable, and suppose we are given a cell decomposition $\cA'$ of $K^\ell$. Then there exists a $\emptyset$-definable cell decomposition of $K^\ell$ into cells $A_i$ refining $\cA'$ and $\emptyset$-definable points $c_{i,1}, ..., c_{i,r}\in K$ such that for every $i,j$, every $\lambda\leq 1$ in $\Gamma_K^\times$ and any $\lambda$-twisted box $F$ of $A_i$, $f_j(F)$ is either a ball $\lambda$-next to $c_{i,j}$, or $f_j(F)$ is equal to $c_{i,j}$.

If we only assume $1$-h-minimality then such a cell decomposition still exists, but the conclusion holds for $\lambda=1$ only. 
\end{thm}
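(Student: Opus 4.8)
The plan is to proceed by induction on $\ell$, using the analytic criterion (T$\ell$, D) from Theorem \ref{thm: equivalences} as the main engine, together with the compatible domain-image preparation in dimension one \cite[Prop.\,2.8.6]{CHR} and Lemma \ref{lem:finite.set.to.functions} to pass from families of finite sets to families of functions parametrized by $\RV$. For the base case $\ell=1$: given $f_1, \ldots, f_r : K \to K$ and a cell decomposition $\cA'$, apply \cite[Prop.\,2.8.6]{CHR} (in its cell-decomposition form, for finitely many functions at once) to obtain finite $\emptyset$-definable sets $C \supset (\text{centers of }\cA')$ and $D_1, \ldots, D_r$ such that each ball $1$-next to $C$ is mapped by $f_j$ either onto a ball $1$-next to $D_j$ or into $D_j$, with the compatible radius scaling for subballs. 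Since $\acl=\dcl$, the finite set $C$ (and the $D_j$) can be split into graphs of $\emptyset$-definable points, yielding the cell decomposition with the required centers $c_{i,j}$; the $\lambda$-twisted box statement for $\lambda=1$ and the refinement to $\lambda\leq 1$ both follow from the valuative Jacobian property built into \cite[Prop.\,2.8.6]{CHR}. This already proves the weaker $1$-h-minimal statement in dimension one.

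For the inductive step under $\ell$-h-minimality, I would mimic the architecture of the proof of Lemma \ref{lem: ellhmin to Tell, D}. Writing $x = (x_1, y) \in K \times K^{\ell-1}$ and $f_{j,x_1}(y) = f_j(x_1, y)$, apply the inductive hypothesis uniformly in $x_1$ (using compactness) to get a $\emptyset$-definable family of cell decompositions $\omega_{x_1}$ of $K^{\ell-1}$ together with family of centers, such that every $f_{j,x_1}$ restricted to each $\lambda$-twisted box of $\omega_{x_1}$ is a ball $\lambda$-next to the appropriate center or a singleton. Use $0$-h-minimality (family preparation \cite[Prop.\,2.6.2]{CHR}) to produce a $\emptyset$-definable family of finite sets $E_{x_1}$ preparing the images $f_{j,x_1}(\omega_{x_1}^{-1}(\eta))$, then apply Lemma \ref{lem:finite.set.to.functions} to replace $(E_{x_1})_{x_1}$ by a family of functions $g_\mu : Y_\mu \to K$ parametrized by $\mu \in \RV^k$. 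Now apply (T$\ell$, D) — more precisely the family version, Lemma \ref{lem: ellhmin to Tell, D}, applied to all the relevant functions of the single variable $x_1$ at once, namely the $g_\mu$ and the image-description functions — together with $1$-h-minimality to get the Jacobian property for the $g_\mu$ on balls $1$-next to a finite set $D \subset K$, and prepare the various auxiliary sets $G_2, G_3, G_4$ (encoding which $\omega_{x_1}$-box we are in, which $g_\mu$ prepares its image, and the precise $\RV$-trace of the image) exactly as in Lemma \ref{lem: ellhmin to Tell, D}. Finally, combine $\chi_0$ (a cell decomposition of $K$ adapted to $D$, via Lemma \ref{lem:decomp.dim.one}/cell decomposition) with the $\omega_{x_1}$ to form a cell decomposition $\cA$ of $K^\ell$; on each twisted box $F$ of $\cA$ with $B := \pi_1(F)$, the image $f_j(F) = g_\mu(B) + \bigcup_{\xi \in G_4} \rv^{-1}(\xi)$ is a sum of two balls (or singletons), hence a ball, whose center is a $\emptyset$-definable point $c_{i,j}$ extractable using $\acl=\dcl$, and the $\lambda$-scaling follows by running the same computation with $\RV_\lambda$-parameters as in Lemma \ref{lem: ellhmin to Tell, D}. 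One should also intersect/refine with the given $\cA'$, which is harmless by cell decomposition \cite[Thm.\,5.2.4]{CHR}.

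For the final sentence (only $1$-h-minimality assumed, conclusion for $\lambda=1$ only), the same induction runs but one can no longer invoke (T$\ell$, D) for arbitrary $\ell$; instead one only uses (T1)-type preparation in the single variable $x_1$, which under $1$-h-minimality still yields, for each fixed $x_1$-box $B$ $1$-next to $D$, that $g_\mu(B)$ is a ball (or singleton) and that the $\RV$-trace $\bigcup_{\xi \in G_4}\rv^{-1}(\xi)$ is a fixed ball (or singleton) as $x_1$ ranges over $B$, so $f_j(F)$ is again a ball $1$-next to a $\emptyset$-definable center — but the compatible $\lambda$-scaling for $\lambda < 1$ is exactly the statement that fails to be available without $\ell$-h-minimality, so one drops it. The main obstacle, as in Lemma \ref{lem: ellhmin to Tell, D}, is the bookkeeping needed to guarantee that the set $\bigcup_{\xi\in G_4}\rv_\lambda^{-1}(\xi)$ describing the "vertical slice" of the image is genuinely independent of $x_1 \in B$ and correctly identified as a translate of $f_{j,x_1}$ applied to a $\lambda$-twisted box of $\omega_{x_1}$ — this is precisely the step that consumes the full strength of $\ell$-h-minimality via the preparation of $G_4$, and making the centers $c_{i,j}$ come out as honest $\emptyset$-definable points (not just finite sets) is where $\acl=\dcl$ is essential.
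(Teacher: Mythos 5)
Your architecture differs from the paper's: you slice off the \emph{first} coordinate and run the induction on the $(\ell-1)$-dimensional fibres, transplanting the machinery of Lemma~\ref{lem: ellhmin to Tell, D} ($g_\mu$, $G_2$--$G_4$, the sum-of-two-balls computation), whereas the paper slices off the \emph{last} coordinate, applies the one-dimensional case fibrewise to get cells $A_i(x)$ and image points $e_{ij}(x)$, prepares the $\RV_\lambda$-trace maps $H_{ij}\colon K^{\ell-1}\times\RV_\bullet\to\RV_\bullet$ via Proposition~\ref{prop:uniform}, and applies the induction hypothesis in dimension $\ell-1$ to the functions $e_{ij}$. Both decompositions consume the same $\RV_\lambda$-parameter budget (hence the same $\ell$-h-minimality), and your identification of where $\ell$-h-minimality versus $1$-h-minimality enters (the preparation of $G_4$ for $\lambda<1$) is correct. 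So the route is viable in principle.

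However, there is a genuine gap: your argument only establishes that $f_j(F)$ \emph{is} a ball (or singleton), which is property (T$\ell$) and not the statement of the theorem. The theorem requires $f_j(F)$ to be a ball $\lambda$-\emph{next to} a fixed $\emptyset$-definable point $c_{i,j}$, and you dismiss this with ``whose center is a $\emptyset$-definable point extractable using $\acl=\dcl$''. A ball has no canonical center, and the natural candidate -- the point $d$ with $g_\mu(B)$ a ball $1$-next to $d$ (which already requires applying the compatible domain--image preparation to $g_\mu$, not merely the Jacobian property) -- can fail: writing $f_j(F)=\bigl(d+\rv^{-1}(\xi')\bigr)+\rv^{-1}(\xi)$, in the case $\xi=-\xi'\neq 0$ one gets $\rv^{-1}(\xi')+\rv^{-1}(-\xi')=B_{<|\xi'|}(0)$, so $f_j(F)$ is an open ball \emph{containing} $d$, hence not $1$-next to $d$, and no amount of $\acl=\dcl$ fixes the choice after the fact. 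The paper's proof handles exactly this case by re-centering: the fibre $f_{j,x}^{-1}(d_{ijk})\cap\pi_\ell(F)$ is a singleton $d_{ijk}'$ by injectivity, balls $\lambda$-next to $d_{ijk}'$ map to balls $\lambda$-next to $d_{ijk}$ by the Jacobian property, and one refines the cell decomposition to use $d_{ijk}'$ as a new domain cell center. Without this step (and without arranging, via a second application of the induction hypothesis to the image-point functions, that the candidate center depends only on the cell and not on the individual twisted box), your construction does not yield the stated conclusion.
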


Note that this result is a stronger version of property (T$\ell$), and the proof is again by induction on the dimension.

\begin{proof}
We induct on $\ell$, where the base case $\ell=1$ is known by \cite[Thm.\,5.2.4 Add.\,3]{CHR}.

Denote by $\cA'(x)$ the cell decomposition on $K$ induced by $\cA'$ when the first $\ell-1$ coordinates are equal to $x\in K^{\ell-1}$. Using the base case of induction and \cite[Thm.\,5.2.4 Add.\,1, Cor.\,3.1.6]{CHR}, take a cell decomposition $\cA(x)$ of $K$ with cells $A_i(x)$ refining $\cA'(x)$ and singletons $e_{ij}(x)$ such that the conclusion of the theorem holds for the $f_{j,x}$ with respect to the cells $A_i(x)$ and the singletons $e_{ij}(x)$, and such that moreover every $f_{j,x}$ has the valuative Jacobian property on twisted boxes in the cell decomposition $\cA(x)$. In more detail, for every $x\in K^{\ell-1}$ and every $i,j$, $f_{j,x}$ maps every $\lambda$-twisted box of the cell $A_i(x)$ to a ball $\lambda$-next to $e_{ij}(x)$. Denote by $c_i(x)$ the cell center of the cell $A_i(x)$. By compactness, we can assume that for every $i,j$ the functions
\begin{align*}
e_{ij} &: K^{\ell-1}\to K: x\mapsto e_{ij}(x) \\
c_i &: K^{\ell-1}\to K: x\mapsto c_i(x)
\end{align*}
are $\emptyset$-definable.

Fix $x\in K^{\ell-1}$ and fix $i, j$. Let $\zeta$ be in $\RV_\lambda$ such that $c_i(x) + \rv_\lambda^{-1}(\zeta)$ is contained in the cell $A_i(x)$. Then for $y\in c_i(x) + \rv_\lambda^{-1}(\zeta)$, the value
\[
\rv_\lambda(f_j(x,y) - e_{ij}(x))
\]
is independent of $y$, and depends only on $\zeta$. We thus obtain $\emptyset$-definable maps
\[
H_{ij}: K^{\ell-1}\times \RV_\bullet \to \RV_\bullet
\]
mapping $(x, \zeta)\in K^{\ell-1}\times \RV_\lambda$ to $\rv_\lambda(f_j(x,y) - e_{ij}(x))$, for $y \in c_i(x) + \rv_\lambda^{-1}(\zeta)$. Apply Lemma \ref{prop:uniform} to obtain a cell decomposition $\cB'$ of $K^{\ell-1}$ uniformly preparing all the $H_{ij}$ and refining $\cA'$, this is where we use $\ell$-h-minimality. By induction, we can further refine this cell decomposition $\cB'$ to obtain a cell decomposition $\cB$ with cells $B_k$ of $K^{\ell-1}$ and $\emptyset$-definable points $d_{ijk}$ such that for every $\lambda\in \Gamma_K^\times, \lambda\leq 1$, every $\lambda$-twisted box $F$ contained in the cell $B_k$, $e_{ij}(F)$ is either equal to $d_{ijk}$, or a ball $\lambda$-next to $d_{ijk}$. Let $\cA$ be the cell decomposition on $K$ induced by $\cB$ and $\cA(x)$. Let $F$ be a twisted box of this cell decomposition, then $\pi_{<\ell}(F) = B$ is a twisted box of the cell decomposition $\cB$, say contained in the cell $B_k$. There is a $\zeta\in \RV$ such that
\[
F = \bigcup_{x\in B} \{x\} \times (c_i(x) + \rv^{-1}(\zeta)),
\]
for some $i$. Now $f_j(F)$ is equal to 
\begin{align*}
f_j(F) &= \bigcup_{x\in B} f_{j,x}\left( c_i(x) + \rv^{-1}(\zeta)\right) \\
	&= \bigcup_{x\in B} e_{ij}(x) + \rv^{-1}\left( H_{ij}(x, \zeta)\right) \\
	&= e_{ij}(B) + \rv^{-1}(\xi) \\
	&= d_{ijk} + \rv^{-1}(\xi') + \rv^{-1}(\xi),
\end{align*}
where $\xi'\in \RV$, and $\xi = H_{ij}(x,\zeta)$ for any $x\in B$. By preparation of $H_{ij}$, this is indeed independent of $x$, for $x\in B$. Now if $0$ is not in $\rv^{-1}(\xi') + \rv^{-1}(\xi)$ then this is a ball $1$-next to $d_{ijk}$. While if $\xi=\xi'=0$ then $f_j(F) = \{d_{ijk}\}$. So the only case in which the conclusion doesn't hold for $F$, is if $\xi = -\xi'\neq 0$. For $x\in B$, the set $f_{j,x}^{-1}(d_{ijk})\cap \pi_\ell(F)$ is a singleton, by the Jacobian property, call it $d_{ijk'}$. Moreover, balls $\lambda$-next to $d_{ijk}'$ are mapped to balls $\lambda$-next to $d_{ijk}$, by the Jacobian property for $f_{j,x}$. Now refine the cell decomposition further for these twisted boxes, by taking the set $d_{ijk}'$ as cell centers. The result will be the cell decomposition with the desired property.

Only assuming $1$-h-minimality, the above proof goes through by replacing every $\lambda$ by $1$ (and $\RV_\lambda$ by $\RV$).
\end{proof}

\subsection{Supremum-Jacobian-property}

In this section we generalize the supremum-Jacobian property from \cite[Thm.\, 5.4.10]{CHR} and \cite[Thm.\, 3.3.6]{CHRV} and obtain a uniform version involving general $\lambda\leq 1$ rather that just $\lambda=1$. Such a generalization is similar to the difference between the valuative Jacobian property \cite[Cor.\,3.1.6]{CHR} and the full Jacobian property \cite[Cor.\,3.2.7]{CHR} in dimension $1$. As above, we prove such a result in $\ell$ dimensions under $\ell$-h-minimality. Recall that on product spaces $K^n$ we use the maximum norm.

\begin{defn}\label{def: higher dimensional RVlambda}
Let $\lambda\leq 1$ be in $\Gamma_K^\times$. We define $\RV_\lambda^{(n)}$ as the quotient $K^n/\sim$ where $x\sim x'$ if and only if $x=x'$ or $|x-x'| < \lambda |x|$. We will write $\rv_\lambda^{(n)}$ for the natural quotient map $K^n\to \RV_\lambda^{(n)}$.
\end{defn}

For $\lambda, \lambda'\in \Gamma_K$, we use the notation $\lambda\ltz \lambda'$ to mean that either $\lambda < \lambda'$ or $\lambda = \lambda' = 0$. 

Let $U\subset K^m$ be open and let $f: U\to K^n$ be a map. As usual, we say that $f$ is $C^0$ if it is continuous, and $C^k$ if there exists a $C^{k-1}$-map $J: U\to K^{n\times m}$ such that for every $u\in U$ we have that
\[
\lim_{x\to u} \frac{|f(x)-f(u)-J(u)\cdot (x-u)|}{|x-u|} = 0.
\]
In the h-minimal setting, every definable function $f: K^n\to K$ is almost everywhere $C^k$, in the sense that the set of $u\in K^n$ for which $f$ is not $C^k$ in any neighbourhood of $u$ is of dimension $<n$. Indeed, this follows from~\cite[Thm.\,5.1.5]{CHR} and cell decomposition.

\begin{defn}[Supremum Jacobian property]\label{defn:sup-prep.III}
Let $\lambda\le 1$ in $\Gamma_K^\times$ and an integer $n\ge 0$ be given. For $X \subset K^n$ open and $f\colon X \to K$ a function, we say that $f$ has the \emph{$\lambda$-supremum Jacobian Property} (\emph{$\lambda$-sup-Jac-prop} for short) on $X$ if
$f$ is $C^1$ on $X$,  
\[\label{eq:rvn.cst.III}
\rv_\lambda(\partial f/\partial x_i) \mbox{ is constant on $X$ for each } i=1,\ldots,n,
\]
and, for every $x$ and $y$ in $X$ we have:
\begin{equation}\label{eq:lambdasupJacprop}
|f(x) - f(y) -  \grad f(y)\cdot(x - y) | \ltz  \lambda\cdot |\grad f(y) |\cdot |x-y|.
\end{equation}
\end{defn}

In the definition, as usual, one considers $\grad f(y)$ as a matrix with a single row, which is multiplied with the column vector $x - y$. Note also that the left hand side of Equation~\eqref{eq:lambdasupJacprop} depends only on the value $\rv_\lambda^{(n)}(\grad f(y))$, a fact which we will use in the proof. The main result of this section is the following.

\begin{thm}[Supremum-Jacobian-preparation]\label{thm:lambdasupJacprop}
Suppose that $K$ is of equicharacteristic zero and that $\Th(K)$ is $\ell$-h-minimal for some integer $\ell>0$. For every $\emptyset$-definable function $f: K^\ell \to K$, there exists a $\emptyset$-definable decomposition $\chi: K^\ell\to \RV^k$ (for some integer $k$) such that for every $\lambda\leq 1$ in $\Gamma_K^\times$ and for every $\lambda$-twisted box $F$ of $\chi$, if $F$ is $\ell$-dimensional then $f$ restricted to $F$ has the $\lambda$-supremum Jacobian property.
%
\end{thm}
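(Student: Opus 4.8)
The plan is to induct on $\ell$, following the same pattern as in the proofs of Lemma \ref{lem: ellhmin to Tell, D} and Theorem \ref{thm.f.ell.III}, reducing the $\ell$-dimensional sup-Jacobian statement to the $(\ell-1)$-dimensional one plus a one-dimensional input. For the base case $\ell=1$, note that the $\lambda$-sup-Jac-prop on an open ball is exactly the content of the full Jacobian property \cite[Cor.\,3.2.7]{CHR} (applied uniformly over $\lambda\leq 1$): $\rv(\partial f/\partial x)$ constant on a ball $1$-next to $C$ gives the $C^1$ condition and constancy, while the compatible scaling of radii of $\lambda$-subballs is precisely inequality \eqref{eq:lambdasupJacprop} in dimension one. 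Running this through Lemma \ref{lem:decomp.dim.one} to convert the finite set $C$ into a decomposition $\chi$ handles $\ell=1$. First I would also record the reduction that it suffices to prepare, for the function $f$, the auxiliary $\emptyset$-definable maps $x\mapsto \rv_\lambda(\partial f/\partial x_i)$ ($i=1,\dots,\ell$), viewed as a family over $\RV_\bullet$ via the $\lambda$-parameter; this is needed to guarantee the constancy clause of Definition \ref{defn:sup-prep.III}.

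For the inductive step ($\ell>1$), write $f_x\colon K^{\ell-1}\to K,\ y\mapsto f(x,y)$ for $x\in K$. By induction, and by compactness, choose a $\emptyset$-definable family of decompositions $\omega_x\colon K^{\ell-1}\to\RV^p$ such that $f_x$ has the $\lambda$-sup-Jac-prop on every $\ell-1$-dimensional $\lambda$-twisted box of $\omega_x$, for every $\lambda\leq 1$; let $c_{x,\eta}$ be the (definably varying) center tuple of $\omega_x^{-1}(\eta)$. The key is to now control how $f$ varies in the $x$-direction on a twisted box of $\omega_x$. On each such box, the value of $f$ is governed by $f_x$ applied to the box together with the partial derivative $\partial f/\partial x_1$ evaluated along it; concretely, for a $\lambda$-twisted box $G_x=c_{x,\eta}+\rv_\lambda^{-1}(\zeta)$ inside $\omega_x^{-1}(\eta)$, the quantity $\rv_\lambda\big(\partial f/\partial x_1(x,y)\big)$ for $y\in G_x$ depends only on $(x,\eta,\zeta,\lambda)$ — this defines $\emptyset$-definable maps $K^{\ell-1}_{?}\times\RV_\bullet\to\RV_\bullet$ after suitable bookkeeping, exactly as the maps $H_{ij}$ in the proof of Theorem \ref{thm.f.ell.III}, and similarly one records $\rv_\lambda$ of the "vertical" partials $\partial f/\partial x_i$ ($i\geq 2$) and of the centers-of-image maps. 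I would apply Proposition-Definition \ref{prop:uniform} (uniform preparation of higher-dimensional families, which needs the full $\ell$-h-minimality — this is where the hypothesis is used) to obtain a decomposition $\chi_0\colon K\to\RV^{k'}$ of the first coordinate uniformly preparing all these auxiliary maps, and refine it by induction in dimension $\ell-1$ so that the "image-center" maps $e_{ij}$ have the sup-Jac-prop. Then set $\chi\colon K^\ell\to\RV^{k'}\times\RV^p,\ (x,y)\mapsto(\chi_0(x),\omega_x(y))$.

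It remains to verify that $f$ has the $\lambda$-sup-Jac-prop on each $\ell$-dimensional $\lambda$-twisted box $F$ of $\chi$. The $C^1$-assertion and the constancy of $\rv_\lambda(\partial f/\partial x_i)$ on $F$ follow from the preparation of the auxiliary maps above (using that $f$ is a.e.\ $C^1$ by \cite[Thm.\,5.1.5]{CHR} and cell decomposition, and that $F$ is $\ell$-dimensional). For the main inequality \eqref{eq:lambdasupJacprop}, I would fix $x,y=(x',y')\in F$ with common first-coordinate ball $B=\pi_1(F)$, split the difference $f(x)-f(y)-\grad f(y)\cdot(x-y)$ into the vertical contribution (controlled by the inductive $\lambda$-sup-Jac-prop for $f_{x'}$ on the $\lambda$-twisted box of $\omega_{x'}$, after translating by the image center) and the horizontal contribution (controlled by the $\lambda$-sup-Jac-prop for the center map $e_{ij}$ obtained by induction in dimension $\ell-1$, together with the uniform control on $\rv_\lambda(\partial f/\partial x_1)$ from $H$-type preparation), and combine the two estimates using the ultrametric inequality and the fact — noted after Definition \ref{defn:sup-prep.III} — that the left-hand side depends only on $\rv_\lambda^{(\ell)}(\grad f(y))$. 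The main obstacle is precisely this last combination step: making sure the mixed term (from the first-order Taylor expansion in $x$ coupled with the $y$-dependence) is absorbed into the error $\ltz\lambda|\grad f(y)||x-y|$, which requires that the preparation simultaneously controls $\rv_\lambda$ of all partials and the sup-Jac behaviour of the centers, so that no "cross term" of borderline size $=\lambda|\grad f(y)||x-y|$ survives; handling the degenerate case where the two contributions cancel to leading order (as in the $\xi=-\xi'$ case in Theorem \ref{thm.f.ell.III}) may require one further refinement of $\chi$.
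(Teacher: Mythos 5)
Your high-level skeleton (induction on $\ell$, base case via the Jacobian property, slicing $K^\ell$ into a one-dimensional and an $(\ell-1)$-dimensional direction, preparing the maps $\rv_\lambda(\partial f/\partial x_i)$ and auxiliary $\RV_\bullet$-valued data by uniform preparation) matches the paper's strategy, and your base case is correct. But the step you yourself flag as ``the main obstacle'' is exactly where the real content of the proof lies, and your proposal does not close it. The problem is concrete: to split $f(P_1)-f(P_2)-\grad f(P_2)\cdot(P_1-P_2)$ into a ``vertical'' and a ``horizontal'' contribution you need an intermediate point such as $P_3=(a_1,b_2)$ (first coordinate from $P_1$, remaining coordinates from $P_2$), and while $P_3$ satisfies the required distance inequalities, there is no reason for $P_3$ to lie in the $\lambda$-twisted box $F$: membership requires $|c_{a_1,\eta,i}-c_{a_2,\eta,i}|<\lambda\,|b_{2,i}-c_{a_2,\eta,i}|$, which fails whenever the box is a thin ``diagonal strip'' whose center moves faster than $\lambda$ times its thickness. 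Constancy of $\rv_\lambda(\grad f)$ on $F$ is necessary but nowhere near sufficient for \eqref{eq:lambdasupJacprop} in dimension $\ge 2$, so one cannot absorb this failure into the error term by preparation of the partials alone.

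The paper resolves this with three ingredients absent from your proposal. First, it uses cell decomposition with $1$-Lipschitz centers and the shear $\beta:(x,y)\mapsto(x,y-c(x))$, together with the transfer Lemma \ref{lem:preservation.supjacprop}, to reduce to twisted boxes of product form $\overline{X}\times\rv_\lambda^{-1}(\rho)$. Second --- and this is the key missing lemma --- it proves Lemma \ref{lem:claim.supjacprop}: a preparation of $f$ restricted to the $(\ell-1)$-dimensional \emph{graphs of the domain cell centers} $c_j$ inside $K^\ell$, so that \eqref{eq:lambdasupJacprop} holds for pairs of points on such a graph. Note this is not the same as your ``image-center maps $e_{ij}$ having the sup-Jac-prop'': those are centers of balls in the image $K$ (as in Theorem \ref{thm.f.ell.III}), whereas what is needed is control of $f$ along lower-dimensional subsets of the \emph{domain}. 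Third, the verification proceeds by a transitivity-of-jumps observation (if \eqref{eq:lambdasupJacprop} holds for $(P_1,P_3)$ and $(P_3,P_2)$ with $|P_1-P_3|\le|P_1-P_2|$ then it holds for $(P_1,P_2)$) combined with a three-case analysis: when a point is far from the center graph the naive intermediate point does lie in the box, and when both points are close to the center graph one routes through the points $(x_i,c_j(x_i))$ and invokes Lemma \ref{lem:claim.supjacprop}. Without an argument of this kind your ``one further refinement of $\chi$'' does not materialize, so as written the proposal has a genuine gap at the combination step.
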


Instead of finding a decomposition, we will fix $\lambda$ and find a map $\chi: K^\ell\to \RV^k\times \RV_\lambda^\ell$ such that $f$ has the $\lambda$-supremum Jacobian property on the fibres of $\chi$. We introduce the following terminology. Let $f: K^n\to K$ be a function. We say that a map $\chi: K^n\to \RV^k\times \RV_\lambda^n$ \emph{$\lambda$-sup-prepares $f$} if each $n$-dimensional fibre $F$ of $\chi$ is open and $f$ has the $\lambda$-sup-Jac-prop on $F$. To obtain a decomposition as in the theorem from such a map $\chi$, one uses family preparation \cite[Prop.\,2.6.2]{CHR}.

We will first prove the theorem in equicharacteristic zero. Using compactness, we will then prove this result also in mixed characteristic. So for now, assume that $K$ is of equicharacteristic zero. By induction on $\ell$ we can assume that the result already holds up to $\ell-1$. We first show that one may assume that $\acl=\dcl$.

\begin{lem}
It suffices to prove Theorem \ref{thm:lambdasupJacprop} under the assumption that $\acl=\dcl$ in $K$.
\end{lem}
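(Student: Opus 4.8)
The goal is to reduce Theorem~\ref{thm:lambdasupJacprop} to the case $\acl=\dcl$. The strategy mirrors the reduction already used for property (T$\ell$, D) in Corollary~\ref{cor: acl=dcl by RVexpansion} and Lemma~\ref{lem: Tell,D to hmixmin}: pass to an $\RV$-expansion $\cL'$ of $\cL$ in which $\acl=\dcl$ holds, prove the statement there, and then descend the resulting decomposition back to $\cL$. First I would invoke Corollary~\ref{cor: acl=dcl by RVexpansion} (via Theorem~\ref{thm: equivalences}, which identifies $\ell$-h-minimality with (T$\ell$, D)) to obtain an $\RV$-expansion $\cL'\supset\cL$ such that $\Th_{\cL'}(K)$ is still $\ell$-h-minimal and has $\acl=\dcl$. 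Any $\emptyset$-definable (in $\cL$) function $f:K^\ell\to K$ is then also $\emptyset$-definable in $\cL'$, so by the assumed case we get an $\cL'$-definable decomposition $\chi':K^\ell\to\RV^{k'}$ such that for every $\lambda\leq 1$ and every $\lambda$-twisted box $F'$ of $\chi'$ which is $\ell$-dimensional, $f|_{F'}$ has the $\lambda$-supremum Jacobian property.

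The next step is to replace $\chi'$ by an $\cL$-definable decomposition without losing the sup-Jac conclusion. This is exactly the role of the lemma just before Subsection~\ref{sec: preservation under RV expansions} begins — the one stating that if $\Th_\cL(K)$ is $1$-h-minimal and $\chi':K^n\to\RV^{k'}$ is an $\cL'$-definable decomposition, then there is an $\cL$-definable decomposition $\chi:K^n\to\RV^k$ every twisted box of which is contained in a twisted box of $\chi'$. Since $\ell$-h-minimality implies $1$-h-minimality (each statement in Theorem~\ref{thm: equivalences} implies the one for $\ell-1$), this lemma applies. The key point is that the $\lambda$-supremum Jacobian property is inherited by subsets: if $f$ has the $\lambda$-sup-Jac-prop on an open set $X$, then it has it on every open subset of $X$, since conditions~\eqref{eq:rvn.cst.III} and~\eqref{eq:lambdasupJacprop} are simply universally quantified over points of the domain and $C^1$-ness is local. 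One must check that a $\lambda$-twisted box $F$ of $\chi$ which is $\ell$-dimensional sits inside a $\lambda$-twisted box of $\chi'$ which is also $\ell$-dimensional (and in particular open): indeed $F$ lies in a twisted box $G$ of $\chi'$ with the same center tuple, and restricting the $\rv$-coordinates of $G$ to the appropriate $\rv_\lambda$-fibre over $F$ yields a $\lambda$-twisted box $G'$ of $\chi'$ containing $F$; since $F$ is $\ell$-dimensional and $F\subset G'$, also $G'$ is $\ell$-dimensional, hence open, and by hypothesis $f|_{G'}$ has the $\lambda$-sup-Jac-prop, so $f|_F$ does too.

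The main obstacle, and the step requiring the most care, is the bookkeeping around twisted boxes under the map $\chi\mapsto\chi'$: one must verify that passing from a twisted box of $\chi$ to the containing twisted box of $\chi'$ is compatible with the finer stratification by $\lambda$-twisted boxes, i.e.\ that a $\lambda$-twisted box of $\chi$ is genuinely contained in a single $\lambda$-twisted box of $\chi'$ (with matching center tuple), for \emph{every} $\lambda\leq 1$ simultaneously. This follows from the definition of ``$\lambda$-twisted box of a decomposition'' together with the requirement in the cited lemma that the twisted boxes of $\chi$ have center tuples refining those of $\chi'$ — but it is worth spelling out, since the sup-Jac conclusion is quantified over all $\lambda$ at once. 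Once this compatibility is in hand, the conclusion of Theorem~\ref{thm:lambdasupJacprop} for $\Th_\cL(K)$ follows, completing the reduction.

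\begin{proof}
By Theorem~\ref{thm: equivalences} and Corollary~\ref{cor: acl=dcl by RVexpansion}, there is an $\RV$-expansion $\cL'\supset\cL$ such that $\Th_{\cL'}(K)$ is $\ell$-h-minimal and satisfies $\acl=\dcl$. In particular $\Th_{\cL'}(K)$ is $1$-h-minimal, and so is $\Th_\cL(K)$, since each statement in Theorem~\ref{thm: equivalences} implies the corresponding one for $\ell-1$.

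Let $f:K^\ell\to K$ be $\emptyset$-definable in $\cL$; it is also $\emptyset$-definable in $\cL'$. Granting the theorem in the case $\acl=\dcl$, we obtain an $\cL'$-definable decomposition $\chi':K^\ell\to\RV^{k'}$ such that for every $\lambda\leq 1$ in $\Gamma_K^\times$ and every $\ell$-dimensional $\lambda$-twisted box $G'$ of $\chi'$, the restriction $f|_{G'}$ has the $\lambda$-supremum Jacobian property. Since $\Th_\cL(K)$ is $1$-h-minimal, the lemma preceding Subsection~\ref{sec: preservation under RV expansions} yields an $\cL$-definable decomposition $\chi:K^\ell\to\RV^k$ every twisted box of which is contained in a twisted box of $\chi'$, with center tuple refining that of $\chi'$.

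Fix $\lambda\leq 1$ in $\Gamma_K^\times$ and let $F$ be a $\lambda$-twisted box of $\chi$ which is $\ell$-dimensional. By construction $F$ lies in a twisted box of $\chi'$ whose center tuple agrees with that of $F$; restricting the $\RV$-coordinates of this twisted box to the $\rv_\lambda$-values taken on $F$ produces a $\lambda$-twisted box $G'$ of $\chi'$ with $F\subset G'$ and the same center tuple. Since $F$ is $\ell$-dimensional and $F\subset G'$, also $G'$ is $\ell$-dimensional, hence open. By the choice of $\chi'$, $f|_{G'}$ has the $\lambda$-supremum Jacobian property; as conditions~\eqref{eq:rvn.cst.III} and~\eqref{eq:lambdasupJacprop} and the $C^1$ condition pass to open subsets, $f|_F$ has the $\lambda$-supremum Jacobian property as well. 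As $\lambda$ and $F$ were arbitrary, $\chi$ witnesses the conclusion of Theorem~\ref{thm:lambdasupJacprop} for $\Th_\cL(K)$.
\end{proof}
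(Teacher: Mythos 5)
Your overall strategy --- pass to an $\RV$-expansion $\cL'$ in which $\acl=\dcl$ holds, apply the theorem there, and descend the resulting decomposition to $\cL$ --- is the intended one (the paper simply refers to the analogous Claim~5.4.13 of \cite{CHR}). The gap is in the descent step. The final lemma of Section~\ref{sec: preservation under RV expansions} only guarantees that every \emph{twisted box} of the $\cL$-definable decomposition $\chi$ is contained in a twisted box of $\chi'$; it says nothing about center tuples, and your assertion that it requires ``the twisted boxes of $\chi$ to have center tuples refining those of $\chi'$'' attributes to it a property it does not state (indeed ``refining'' of center tuples is not defined anywhere). Consequently the claim that a $\lambda$-twisted box $F$ of $\chi$ sits inside a single $\lambda$-twisted box $G'$ of $\chi'$ --- which is exactly what you need, since the conclusion of Theorem~\ref{thm:lambdasupJacprop} is quantified over all $\lambda$ at once --- is unjustified, and can genuinely fail. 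Concretely, writing $c$ and $c'$ for the center tuples of the relevant boxes of $\chi$ and $\chi'$ and $r'$ for the value of the box of $\chi'$, two points $x,y\in F$ lie in the same $\lambda$-twisted box of $\chi'$ only if $|h_i(\pi_{<i}(x))-h_i(\pi_{<i}(y))|<\lambda|r_i'|$ for each $i$, where $h_i=c_i-c_i'$. Set-level containment of the $1$-twisted boxes only gives that $h_i$ takes values in a fixed ball of radius $|r_i'|$; nothing prevents $h_i$ from oscillating by $\lambda|r_i'|$ or more on the much smaller base $\pi_{<i}(F)$ of a $\lambda$-twisted box (for instance when $h_i$ satisfies the Jacobian property only on balls next to a finite set meeting $\pi_{<i}(F)$, where its local scaling factor can exceed $|r_i'|/|r_i|$). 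Ruling this out would require additionally preparing the differences $c_i-c_i'$, which is delicate because $c_i'$ is only $\cL'$-definable, and which your argument does not do.

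The clean way around this is the one the paper sets up immediately after the statement of Theorem~\ref{thm:lambdasupJacprop}: prove the fixed-$\lambda$ version, i.e.\ for each $\lambda$ produce a map $\chi_\lambda\colon K^\ell\to\RV^k\times\RV_\lambda^\ell$ that $\lambda$-sup-prepares $f$. For that formulation the descent from $\cL'$ to $\cL$ is harmless: one only needs an $\cL$-definable map whose fibres are contained in those of the $\cL'$-definable one (via \cite[Lem.\,4.3.4]{CHR} and its $\RV_\lambda$-variants), and the $\lambda$-sup-Jac-prop manifestly passes to open subsets since all three of its defining conditions are restrictions of universally quantified statements over the domain. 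The decomposition quantified over all $\lambda$ is then assembled only at the end, by family preparation. As written, the ``bookkeeping'' step you flag and then dismiss is not a formality but the actual content of the reduction, and it is not closed.
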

\begin{proof}
This is similar to \cite[Claim.\,5.4.13]{CHR}.
\end{proof}

So for the rest of the proof, we will assume that $\acl=\dcl$ in $K$.

\begin{lem}\label{lem:preservation.supjacprop}
Let $X\subset K^m$, $Y\subset K^n$ be open and let $\alpha: X\to Y$ be $C^1$. Suppose that $\rv_\lambda^{(n\times m)}(\Jac \alpha)$ is constant on $X$ and that for $x_1, x_2\in X$ we have
\[
|\alpha(x_2)-\alpha(x_1)| = |\Jac \alpha||x_2-x_1|
\]
and
\[
\rv_\lambda^{(n)} (\alpha(x_2)-\alpha(x_1)) = \rv_\lambda^{(n)}((\Jac(\alpha)(x_1)\cdot(x_2-x_1)).
\]
Finally, suppose that $f: Y\to K$ is $C^1$ such that $f\circ \alpha$ has the $\lambda$-sup-Jac-prop on $X$. Then $f$ satisfies Equation~\eqref{eq:lambdasupJacprop}.
\end{lem}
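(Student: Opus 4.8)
The plan is to show that the $\lambda$-sup-Jac-prop for $g := f\circ\alpha$, combined with the geometric compatibility hypotheses on $\alpha$, transports to the inequality~\eqref{eq:lambdasupJacprop} for $f$ on $Y$. Fix $y_1 = \alpha(x_1)$ and $y_2 = \alpha(x_2)$ in $Y$, with $x_1, x_2 \in X$; by surjectivity of $\alpha$ onto $Y$ (and the fact that we only need the estimate for points of $Y$ of the form $\alpha(x)$, which is all of $Y$ since $\alpha\colon X\to Y$) it suffices to bound $|f(y_2) - f(y_1) - \grad f(y_1)\cdot(y_2 - y_1)|$ by $\lambda\cdot|\grad f(y_1)|\cdot|y_2 - y_1|$. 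The chain rule gives $\grad g(x_1) = \grad f(y_1)\cdot \Jac\alpha(x_1)$, so that $\grad g(x_1)\cdot(x_2 - x_1) = \grad f(y_1)\cdot\big(\Jac\alpha(x_1)\cdot(x_2-x_1)\big)$.

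The main step is the following telescoping. Write
\[
f(y_2) - f(y_1) - \grad f(y_1)\cdot(y_2 - y_1)
= \Big( g(x_2) - g(x_1) - \grad g(x_1)\cdot(x_2 - x_1)\Big)
+ \grad f(y_1)\cdot\Big(\Jac\alpha(x_1)\cdot(x_2-x_1) - (y_2 - y_1)\Big).
\]
The first bracket is bounded by $\lambda\cdot|\grad g(x_1)|\cdot|x_2 - x_1|$ via the $\lambda$-sup-Jac-prop for $g$ on $X$. For the second term: the hypothesis $\rv_\lambda^{(n)}(\alpha(x_2) - \alpha(x_1)) = \rv_\lambda^{(n)}(\Jac\alpha(x_1)\cdot(x_2 - x_1))$ means exactly that $|\Jac\alpha(x_1)\cdot(x_2-x_1) - (y_2-y_1)| < \lambda\cdot|y_2 - y_1|$ (using the definition of $\RV_\lambda^{(n)}$, together with $|y_2 - y_1| = |\Jac\alpha||x_2 - x_1|$ from the first compatibility hypothesis, which pins down the common value of both sides). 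Hence the second term is strictly bounded by $|\grad f(y_1)|\cdot\lambda\cdot|y_2 - y_1|$.

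It remains to assemble these two bounds and to convert $|\grad g(x_1)|\cdot|x_2 - x_1|$ into something involving $|\grad f(y_1)|\cdot|y_2 - y_1|$. Since $\grad g(x_1) = \grad f(y_1)\cdot\Jac\alpha(x_1)$ we get $|\grad g(x_1)|\leq |\grad f(y_1)|\cdot|\Jac\alpha(x_1)|$, and combining with $|y_2 - y_1| = |\Jac\alpha(x_1)|\cdot|x_2 - x_1|$ (here one must be slightly careful: the hypothesis gives $|\alpha(x_2)-\alpha(x_1)| = |\Jac\alpha|\,|x_2-x_1|$ with $|\Jac\alpha|$ the maximum of the absolute values of the entries, and the submultiplicativity $|\grad g|\leq|\grad f|\,|\Jac\alpha|$ for the max-norm is straightforward) yields $|\grad g(x_1)|\cdot|x_2-x_1|\leq |\grad f(y_1)|\cdot|y_2 - y_1|$. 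Therefore both terms in the telescoping are bounded by $\lambda\cdot|\grad f(y_1)|\cdot|y_2 - y_1|$, one of them strictly, so by the ultrametric inequality the sum satisfies the strict bound~\eqref{eq:lambdasupJacprop}, i.e. $\ltz$. The one genuinely delicate point — and the step I expect to need the most care — is bookkeeping the various norms (operator-type max-norm on the Jacobian matrix versus the sup-norm on vectors) so that the inequalities $|\grad g|\leq|\grad f|\,|\Jac\alpha|$ and $|y_2-y_1| = |\Jac\alpha|\,|x_2-x_1|$ chain together cleanly; everything else is a direct substitution using the three displayed hypotheses and the non-archimedean triangle inequality.
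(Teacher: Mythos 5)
Your proof is correct, and it is the natural (essentially unique) argument: decompose the error term for $f$ via the chain rule into the error term for $f\circ\alpha$ plus $\grad f(y_1)\cdot\bigl(\Jac\alpha(x_1)(x_2-x_1)-(y_2-y_1)\bigr)$, bound each piece by $\lambda\,|\grad f(y_1)|\,|y_2-y_1|$ using submultiplicativity of the max-norm, the hypothesis $|y_2-y_1|=|\Jac\alpha|\,|x_2-x_1|$, and the definition of $\rv_\lambda^{(n)}$, then conclude by the ultrametric inequality. The paper gives no details here (it only points to the $\lambda=1$ analogue in \cite[Lem.\,5.4.9]{CHR}), so your write-up supplies exactly the intended argument.
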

\begin{proof}
Similar to \cite[Lem.\,5.4.9]{CHR}.
\end{proof}

\begin{lem}\label{lem:claim.supjacprop}
Let $Z\subset K^\ell$ be $(\ell-1)$-dimensional $\emptyset$-definable and let $f$ be $C^1$ on some open neighbourhood of $Z$. Then there is a $\emptyset$-definable map $\chi: Z\to \RV^k\times \RV_\lambda^\ell$ such that if $F$ is an $(\ell-1)$-dimensional fibre of $\chi$ and $x, x_0\in F$ then
\[
|f(x) - f(y) -  \grad f(y)\cdot(x - y) | <  \lambda\cdot |\grad f(y) |\cdot |x-y|.
\]
\end{lem}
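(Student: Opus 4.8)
The statement concerns an $(\ell-1)$-dimensional definable set $Z \subset K^\ell$, so the idea is to reduce it to the lower-dimensional case of Theorem~\ref{thm:lambdasupJacprop}, which we already have by the induction hypothesis. First I would split $Z$ into finitely many definable pieces on each of which (after a permutation of coordinates, fixed on the piece) $Z$ is the graph of a definable $C^1$ function $g: U \to K$ with $U \subset K^{\ell-1}$ open; this is possible by cell decomposition together with the almost-everywhere $C^1$-property \refI{Thm.5.1.5} and the fact that $Z$ has dimension $\ell-1$. On such a piece, the map $\alpha: U \to Z$, $u \mapsto (u, g(u))$ is a definable $C^1$ parametrization, and the restriction $f|_Z$ pulls back to $\tilde f := f \circ \alpha : U \to K$, which is $C^1$ on $U$ (using that $f$ is $C^1$ on a neighbourhood of $Z$).

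\medskip

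\noindent Next I would apply the $(\ell-1)$-dimensional case of Theorem~\ref{thm:lambdasupJacprop} (or rather its reformulation via $\lambda$-sup-preparing maps, which is how the induction is really being run) to the function $\tilde f : K^{\ell-1} \to K$, fixing $\lambda$: this yields a $\emptyset$-definable map $\chi_U : U \to \RV^{k'} \times \RV_\lambda^{\ell-1}$ such that $\tilde f$ has the strict $\lambda$-sup-Jac-prop on every $(\ell-1)$-dimensional fibre of $\chi_U$. I would then pull this back along $\alpha^{-1}$ to a map on the graph piece, and also record enough $\RV_\lambda$-information about $\alpha$ itself — namely the value $\rv_\lambda^{(\ell)}(\grad g)$ and enough to control $|\alpha(x_2)-\alpha(x_1)|$ and $\rv_\lambda^{(\ell)}(\alpha(x_2)-\alpha(x_1))$ versus the linearization — by further refining via family preparation \refI{Prop.2.6.2} applied to the relevant auxiliary definable maps into $\RV_\bullet$. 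After this refinement, the hypotheses of Lemma~\ref{lem:preservation.supjacprop} are satisfied for $\alpha$, $\tilde f = f \circ \alpha$, so on each resulting fibre $F$ of the combined map $\chi: Z \to \RV^k \times \RV_\lambda^\ell$ the function $f$ satisfies Equation~\eqref{eq:lambdasupJacprop}; since the conclusion of Lemma~\ref{lem:preservation.supjacprop} gives the inequality in the form $\ltz$ and here we want strict $<$, I would note that because $\tilde f$ has the \emph{strict} $\lambda$-sup-Jac-prop on the fibre, the $0 = 0$ case is excluded and the inequality is genuinely strict, as required.

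\medskip

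\noindent Finally I would glue the finitely many pieces: taking the common refinement of the maps $\chi$ produced on each graph piece (and recording which piece we are on in an extra $\RV$-coordinate) gives a single $\emptyset$-definable $\chi: Z \to \RV^k \times \RV_\lambda^\ell$ with the stated property. The main obstacle is the bookkeeping in the middle step: one has to make sure that after pulling back along the parametrization $\alpha$, the map $\alpha$ genuinely behaves like its linearization at the level of $\rv_\lambda^{(\ell)}$ and of norms (so that Lemma~\ref{lem:preservation.supjacprop} applies), which requires preparing several auxiliary $\RV_\bullet$-valued functions simultaneously and invoking the induction hypothesis both for $\tilde f$ and implicitly for the coordinate function $g$; getting the quantifiers ``for every $\lambda \le 1$'' handled correctly (we fix $\lambda$, prepare, then pass to a decomposition independent of $\lambda$ by family preparation, exactly as in the reduction of Theorem~\ref{thm:lambdasupJacprop}) is the other delicate point, but it is entirely parallel to \refI{Claim.5.4.13} and the surrounding arguments.
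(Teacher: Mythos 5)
Your overall strategy is the same as the paper's: write $Z$ piecewise as a graph of a definable function $g$ over $K^{\ell-1}$ (after a coordinate permutation), pull $f$ back along the parametrization $\alpha(u)=(u,g(u))$, apply the $(\ell-1)$-dimensional case of Theorem~\ref{thm:lambdasupJacprop} to $f\circ\alpha$ and to $g$, and transfer the conclusion back through Lemma~\ref{lem:preservation.supjacprop}. The one step that does not go through as you describe it is the verification of the hypothesis $|\alpha(x_2)-\alpha(x_1)|=|\Jac\alpha|\,|x_2-x_1|$ of Lemma~\ref{lem:preservation.supjacprop}. This is a condition on all pairs of points of a fibre, so it cannot be arranged by family-preparing auxiliary $\RV_\bullet$-valued functions of one variable, and it is genuinely false for a general graph presentation: if some $|\partial g/\partial x_i|>1$ then $|\Jac\alpha|=\max_i|\partial g/\partial x_i|>1$, while cancellation in $\grad g(x_1)\cdot(x_2-x_1)$ can make $|g(x_2)-g(x_1)|$, and hence $|\alpha(x_2)-\alpha(x_1)|=\max(|x_2-x_1|,|g(x_2)-g(x_1)|)$, strictly smaller than $|\Jac\alpha|\,|x_2-x_1|$.

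The missing ingredient is the cell decomposition with $1$-Lipschitz centers, \cite[Thm.\,5.2.4, Add.\,1,5]{CHR}: after a coordinate permutation on each piece one may take the graph function $g$ (the last cell center) to be $1$-Lipschitz, whence $|\Jac\alpha|=1$ and $|\alpha(x_2)-\alpha(x_1)|=|x_2-x_1|$ for the maximum norm, so the norm hypothesis holds trivially; the $\rv_\lambda^{(\ell)}$-compatibility of the increments of $\alpha$ with its linearization then follows from the inductively obtained $\lambda$-sup-Jac-prop for $g$, which you do invoke (your ``implicitly for the coordinate function $g$''). With that correction the rest of your argument --- the reduction to the $(\ell-1)$-dimensional theorem for $f\circ\alpha$, the remark on strictness, and the gluing over finitely many pieces with an extra $\RV$-coordinate --- matches the paper's proof.
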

\begin{proof}
By cell decomposition \cite[Thm.\,5.2.4 Add.\,1,5]{CHR}, we can find a cell decomposition $\cA$ of $Z$ such that on each $\lambda$-twisted box of $\cA$, $\rv^{(\ell)}_\lambda(\grad f)$ is constant and that after a coordinate permutation every cell is of type $(1, ..., 1, 0, ..., 0)$ with $1$-Lipschitz centers. Fix a cell $A$ of $\cA$. We can assume that $A$ is a cell of type $(1, ..., 1, 0)$ with $1$-Lipschitz center $c$. Let $\hat{A}$ be the projection of $A$ onto the first $(\ell-1)$ coordinates. Then $A$ is the graph of the $1$-Lipschitz function $c: \hat{A}\to K$, where $c$ is the last center coordinate. Let $\alpha: \hat{A}\to A: x\mapsto (x, c(x))$ and apply inductively the $\lambda$-sup-Jac-prop for $c$ and $f\circ \alpha$ to obtain cell decompositions $\chi_1, \chi_2$. Let $\cB$ be a cell decomposition refining both $\chi_1$ $\chi_2$. Since $c$ is $1$-Lipschitz, $\alpha$ satisfies the assumptions of Lemma \ref{lem:preservation.supjacprop} on every $\lambda$-twisted box of $\cB$. Thus the fact that $f\circ \alpha$ satisfies Equation~\eqref{eq:lambdasupJacprop} on every $\lambda$-twisted box of $\cB$ yields that also $f$ satisfies Equation~\eqref{eq:lambdasupJacprop} on the corresponding cell decomposition of $A$. This argument works for every cell $A$ in $\cA$ and so we can combine them into one cell decomposition such that the desired result holds on every $\lambda$-twisted box of the resulting cell decomposition.
\end{proof}

\begin{proof}[Proof of Theorem \ref{thm:lambdasupJacprop}]
We are given $f: X\to K$ for $X\subset K^\ell$ and we wish to $\lambda$-sup-prepare $f$. Write elements of $K^\ell$ as $(x,y)$ with $x\in K^{\ell-1}$ and $y\in K$. Note that we can freely restrict $f$ to the cells of a cell decomposition if desired, since we can refine cell decompositions. So by cell decomposition and the fact that the set where $f$ is $C^1$ is definable, we can assume that $X$ is a cell, $f$ is $C^1$ and that $\rv_\lambda^{(\ell)}(\grad f)$ is constant on $\lambda$-twisted boxes of $X$. Moreover, applying~\cite[Thm.\,5.2.4 Add.\,1,5]{CHR} gives after a coordinate permutation that $X$ is a $(1, ..., 1)$-cell with $1$-Lipschitz centers. Let $c$ be the last center coordinate. (We can disregard lower dimensional cells.) 

For $x\in K^{\ell-1}$ let $\chi_x: K\to \RV^k\times \RV_\lambda$ be $\emptyset$-definable such that for every coordinate permutation $\sigma: K^\ell\to K^\ell$ the map $y\mapsto f(\sigma(x, y))$ has the $\lambda$-sup-Jac-prop on the fibres of $\chi_x$. Here we have used induction on $\ell$. By compactness we may assume that the family $(\chi_x)_x$ is $\emptyset$-definable.

Denote by $\hat{X}$ the projection of $X$ onto the first $\ell-1$ coordinates, so that $c$ is defined on $\hat{X}$. By induction, we can find a map $\chi_1: K^{\ell-1}\to \RV^k\times \RV_\lambda^{\ell-1}$ such that $c$ satisfies the $\lambda$-sup-Jac-prop on fibres of $\chi_1$. Any $\lambda$-twisted box of $X$ is of the form
\[
F = \{(x,y)\in \overline{X}\times K \mid \rv_\lambda(y-c(x)) = \rho\},
\]
for some $\rho\in \RV_\lambda$ and some $\lambda$-twisted box $\overline{X}\subset \hat{X}$. Consider the injective map
\[
\beta: X \to K^\ell: (x,y)\mapsto (x, y-c(x))
\]
and note that for fixed $\rho\in \RV_\lambda$, $\beta$ maps the $\lambda$-twisted box $F$ bijectively onto $\overline{X}\times \rv_\lambda^{-1}(\rho)$. On such a $\lambda$-twisted box, the inverse $\alpha = \beta^{-1}$ satisfies the requirements of Lemma \ref{lem:preservation.supjacprop}. Thus it suffices to $\lambda$-sup-prepare $f\circ \alpha$ on every $\lambda$-twisted box.

We have thus reduced to the following situation. The set $X$ is a cell, and every $\lambda$-twisted box is of the form $\overline{X}\times \rv_\lambda^{-1}(\rho)$ for some $\lambda$-twisted box $\overline{X}$ in $\hat{X}$ and some $\rho\in \RV_\lambda$, $f$ is $C^1$, $\rv_\lambda^{(\ell)}(\grad (f))$ is constant on $\lambda$-twisted boxes in $X$ and for every fixed $x$, $f(x, \cdot)$ has the $\lambda$-sup-Jac-prop when restricted to a $\lambda$-twisted box.

By induction and compactness, we find a map $\chi_2: X\to \RV^k\times \RV_\lambda^{\ell-1}$ such that for every fixed $y\in K$ the map $f(\cdot, y)$ is $\lambda$-sup-prepared by $\chi_2(\cdot, y)$. Pick a cell decomposition of $X$ with continuous centers such that $\chi_2$ is constant on any $\lambda$-twisted box of the cell decomposition. Suppose there are $n$ cells $A_1, ..., A_n$ in this cell decomposition. Fix a cell $A_i$ and let $\hat{A}_i$ be the projection onto the first $\ell-1$ coordinates. Let $c_i: \hat{A}_i\to K$ be the last center component.

Let $Z_i$ be the intersection of the graph of $c_i$ with $X$ and note that this is of dimension at most $\ell-1$. Apply Lemma \ref{lem:claim.supjacprop} to $Z_i$ and $f$, which yields a map $\chi'_i: Z_i\to \RV^k\times \RV_\lambda^\ell$ such that on every $(\ell-1)$-dimensional fibre of $\chi'_i$, $f$ satisfies Equation~\eqref{eq:lambdasupJacprop}. Extend every $\chi'_i$ by zero on $X$ and consider the map
\[
X\to \RV^k\times \RV_\lambda^{\ell-1} \times (\RV^k\times \RV_\lambda^{\ell})^n: (x,y)\mapsto (\chi_2(x,y), (\chi'_i(x, c_i(x))_i).
\]
Let $\cA$ be a cell decomposition refining this map. Denote the corresponding map $\chi: K^\ell\to \RV^k\times \RV_\lambda^\ell$ whose fibres are the $\lambda$-twisted boxes of the cell decomposition. Fix $j\in \{1,..., n\}$. If $(x_1, y_1)$ and $(x_2, y_2)$ are in the same $\ell$-dimensional fibre of $\chi$, and if $(x_1, c_j(x_1)), (x_2, c_j(x_2))$ are both in $X$ then they satisfy Equation~\eqref{eq:lambdasupJacprop}.

Let $F$ be a $\lambda$-twisted box of $\cA$, i.e.\ a fibre of $\chi$. Then $F$ is still of the form
\[
F = \{(x,y)\in K^\ell\mid x\in \overline{F}, \rv_\lambda(y-c_j(x)) = \xi\}
\]
for some $j\in \{1, ..., n\}$ and some $\xi\in \RV_\lambda$. Here $\overline{F} = \pi_{<\ell}(F)$. We claim that $f$ has the $\lambda$-sup-Jac-prop on $F$. We already know that $f$ is $C^1$ on $F$ and that $\rv_\lambda^{(\ell)}(\grad f)$ is constant on $F$. So fix $(x_1, y_1), (x_2, y_2)\in F$. Then certainly $\rv_\lambda y_1 = \rv_\lambda y_2$ by our construction. If $(x_3, y_3)\in F$ is chosen so that $\rv_\lambda y_3 = \rv_\lambda y_2$ and such that 
\[
|(x_1, y_1) - (x_3, y_3)| \leq |(x_1, y_1) - (x_2, y_2)|
\]
then a simple computation shows that if Equation~\eqref{eq:lambdasupJacprop} holds for $(x_1, y_1), (x_3, y_3)$ and for $(x_3, y_3), (x_2, y_2)$ then it also holds for $(x_1, y_1), (x_2, y_2)$. Thus in proving that Equation~\eqref{eq:lambdasupJacprop} holds for $(x_1, y_1), (x_2, y_2)$ we can jump to several intermediate points in $X$. We reason as follows.
\begin{enumerate}
\item If $|c_j(x_1)-y_1| > |y_2-y_1|$ then we have that $(x_1, y_2)\in F$. Hence we can jump from $(x_1, y_1)$ to $(x_1, y_2)$ since $f(x_1, \cdot)$ is $\lambda$-sup-prepared by $\chi$ by our construction above. Then we can jump from $(x_1, y_2)$ to $(x_2, y_2)$ since we also ensure that $f(\cdot, y_2)$ is $\lambda$-sup-prepared by $\chi$. 
\item If $|c_j(x_2)-y_2| > |y_2-y_1|$ then we reason similarly as in case 1.
\item So assume that $|c_j(x_i)-y_i| \leq |y_2-y_1|$ for $i=1, 2$. Then since $\rv_\lambda y_1 = \rv_\lambda y_2$, also $\rv_\lambda c_j(x_i) = \rv_\lambda y_1$ for $i=1,2$. Moreover, for $i=1,2$ we have that
\[
|(x_i, y_i) - (x_i, c_j(x_i))| \leq |(x_1, y_1) - (x_2, y_2)|.
\]
Hence by our construction we can jump from $(x_1, y_1)$ to $(x_1, c_j(x_1))$ since we have ensured that $f(x_1, \cdot)$ is $\lambda$-sup-prepared by $\chi$. We can then jump from $(x_1, c_j(x_1))$ to $(x_2, c_j(x_2))$ by the discussion above. Finally, we can jump from $(x_2, c_j(x_2))$ to $(x_2, y_2)$ by the same reason as before.
\end{enumerate}
This finishes the proof.
\end{proof}

The mixed characteristic case is deduced by compactness and coarsening.

\begin{thm}
Let $\cT$ be $\ell$-\hc-minimal for some $\ell>0$ and let $K$ be a model of $\cT$. Assume that $\acl=\dcl$ in $\cT$ and let $f: K^\ell\to K$ be $\emptyset$-definable. Then there exists a $\emptyset$-definable cell decomposition $\chi: K^\ell\to \RV_N^k$ of depth $N$ (for some integers $N, k$) such that for every $\lambda\leq |N|$ and for every $\lambda$-twisted box $F$ of $\chi$, if $F$ is $\ell$-dimensional then $f$ restricted to $F$ has the $|M|\lambda$-supremum Jacobian property (for some positive integer $M$).
\end{thm}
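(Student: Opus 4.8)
The plan is to transfer the equicharacteristic-zero result Theorem~\ref{thm:lambdasupJacprop} to $K$ by coarsening to an equicharacteristic-zero valuation, in the spirit of Theorem~\ref{thm: coarsening the valuation} and following the pattern of \cite[Thm.\,3.3.6]{CHRV}. First I would reduce to a convenient model: for a fixed $\emptyset$-definable $\chi$ of fixed depth $N$ and a fixed integer $M$, the assertion ``every $\ell$-dimensional $\lambda$-twisted box of $\chi$ is $|M|\lambda$-sup-Jac-prepared'' is expressible by an $\cL$-sentence (using that preparation is a definable condition \cite[Lem.\,2.4.2]{CHR} and that the $\lambda$-sup-Jac-prop is a uniform scaling condition on twisted boxes, quantifying over $\lambda\in\Gamma_K^\times$ and over points of the boxes, combined with the definability of the locus where $f$ is $C^1$). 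Hence the existence of a suitable triple $(\chi,N,M)$ passes from any elementary extension of $K$ down to $K$, so we may replace $K$ by a $|K|^+$-saturated elementary extension. If $K$ is then of equicharacteristic zero the statement is Theorem~\ref{thm:lambdasupJacprop} with $M=1$; otherwise $K$ has mixed characteristic and, by saturation, the finest equicharacteristic-zero coarsening $|\cdot|_\eqc$ is non-trivial.

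By definition of $\ell$-\hc-minimality, $\Th_{\cL_\eqc}(K)$ is $\ell$-h-minimal with respect to $|\cdot|_\eqc$, and $K_\eqc$ is of equicharacteristic zero. Applying Theorem~\ref{thm:lambdasupJacprop} to $f\colon K^\ell\to K$ over $K_\eqc$ yields an $\cL_\eqc$-definable decomposition $\chi_\eqc\colon K^\ell\to\RV_\eqc^{k'}$ such that for every $\mu\le 1$ in $\Gamma_{K,\eqc}^\times$ and every $\ell$-dimensional $\mu$-twisted box of $\chi_\eqc$ (with respect to $|\cdot|_\eqc$), $f$ restricted to it has the $\mu$-supremum Jacobian property with respect to $|\cdot|_\eqc$. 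Now I descend to $\cL$: since $\cO_{K,\eqc}$ is a pullback of a predicate on $\RV$, $\cL_\eqc$ is an $\RV$-expansion of $\cL$, and $\Th_\cL(K)$ has $\acl=\dcl$ and is $1$-\hc-minimal; therefore, arguing exactly as in the proof of Lemma~\ref{lem:celldecomp.Kecc.to.K} (using \cite[Cor.\,4.1.17]{CHR} to see that the centers of $\chi_\eqc$ are $\acl_\cL$ over their arguments, then $\acl=\dcl$ together with cell decomposition over $K$), I obtain an integer $N$ and an $\cL$-definable cell decomposition $\cA$ of $K^\ell$ of depth $N$, with associated map $\chi\colon K^\ell\to\RV_N^k$, such that every $|1|_\eqc$-twisted box of $\cA$ is contained in a ($|\cdot|_\eqc$-)twisted box of $\chi_\eqc$.

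It remains to produce a single positive integer $M$ such that for every $\lambda\le|N|$ in $\Gamma_K^\times$ and every $\ell$-dimensional $\lambda$-twisted box $F$ of $\cA$, the restriction $f|_F$ has the $|M|\lambda$-supremum Jacobian property with respect to $|\cdot|$. Fix such an $F$, let $\mu$ be the image of $\lambda$ in $\Gamma_{K,\eqc}$, and let $F_\eqc\supseteq F$ be the $\mu$-twisted box of $\chi_\eqc$ through $F$ provided by the previous step (only its $\ell$-dimensional part matters). On $F_\eqc$, $f$ has the $\mu$-sup-Jac-prop with respect to $|\cdot|_\eqc$. I would transfer this to $|\cdot|$ along the lines of the $r=0$ case of Theorem~\ref{thm: coarsening the valuation}: $C^1$-ness and the constancy of $\rv_{|M|\lambda}(\partial f/\partial x_i)$ on $F$ are arranged by a further definable refinement and the almost-everywhere $C^k$-ness available over $K$; and the coarse inequality $|f(x)-f(y)-\grad f(y)\cdot(x-y)|_\eqc\ltz\mu\,|\grad f(y)|_\eqc\,|x-y|_\eqc$ lifts to $|f(x)-f(y)-\grad f(y)\cdot(x-y)|<\mu_0\nu\,|\grad f(y)|\,|x-y|$ for every $\nu$ in the convex subgroup $H$ defining the coarsening, where $\mu_0\in\Gamma_K^\times$ lifts $\mu$. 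Since $H$ has no maximal element and $x,y$ lie in the common $\lambda$-twisted box $F$, this can be tightened to $<|M|\lambda\,|\grad f(y)|\,|x-y|$ for a suitable integer $M$, giving the $|M|\lambda$-sup-Jac-prop on $F$; a compactness argument (again via the definability of preparation) then yields one $M$ working simultaneously for all $F$ and all $\lambda$.

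The main obstacle is precisely this last transfer: upgrading the $\mu$-supremum Jacobian property with respect to the coarse valuation $|\cdot|_\eqc$ to the $|M|\lambda$-supremum Jacobian property with respect to the fine valuation $|\cdot|$. The delicate points are matching the fine twisted boxes of $\cA$ with the coarse twisted boxes of $\chi_\eqc$ (the cell centers of $\cA$ need not coincide with the centers of $\chi_\eqc$, so one must control the resulting discrepancies inside a fixed $\lambda$-twisted box), recovering $C^1$-ness and the constancy of $\rv_{|M|\lambda}(\partial f/\partial x_i)$ on the fine side, and extracting a \emph{uniform} integer $M$ rather than one depending on the box. This is the higher-dimensional analogue of passing from the valuative Jacobian property to the full Jacobian property in dimension one, and it is handled by the no-maximal-element mechanism already exploited in the proof of Theorem~\ref{thm: coarsening the valuation} together with the compactness and definability-of-preparation tools used throughout this section.
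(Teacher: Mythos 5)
Your proposal follows the paper's proof essentially step for step: pass to a saturated model, apply Theorem~\ref{thm:lambdasupJacprop} over the equicharacteristic-zero coarsening $K_\eqc$, descend to an $\cL$-definable cell decomposition of some depth $N$ via Lemma~\ref{lem:celldecomp.Kecc.to.K}, and then transfer the coarse supremum Jacobian inequality to an $|M|\lambda$-inequality for the fine valuation. The only cosmetic difference is in that last step, where the paper argues by contradiction --- using $\aleph_0$-saturation to produce a single pair $x\neq y$ lying in the same $|M|\lambda$-twisted box for every $M$ while violating every $|M|\lambda$-inequality, which then contradicts the coarse statement --- rather than your ``lift for every $\nu\in H$ and tighten, then uniformize by compactness'' phrasing; the two are the same mechanism.
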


\begin{proof}
We may assume that $K$ is $\aleph_0$-saturated. For $\lambda$ in $\Gamma_K$, denote by $\lambda_\eqc$ the image of $\lambda$ in $\Gamma^\times_{K, \eqc}$. We apply Theorem \ref{thm:lambdasupJacprop} in the coarsened field $K_\eqc$ to obtain a decomposition $\chi: K^\ell\to \RV_\eqc^k$ such that $f$ satisfies the $\lambda_\eqc$-supremum Jacobian property on the $\lambda_\eqc$-twisted boxes of $\chi$, for every $\lambda\in \Gamma_K$. Use Lemma \ref{lem:celldecomp.Kecc.to.K} to find an $\cL$-definable cell decomposition $\phi: K^\ell\to \RV_N^n$ of depth $N$ (for some integers $N, n$) such that every $|1|_\eqc$-twisted box of $\phi$ is contained in a twisted box of $\chi$. We can moreover assume that all $\rv_\lambda(\partial f/\partial x_i)$ are constant on the $\lambda$-twisted boxes of $\phi$, for any $\lambda\in \Gamma_K^\times, \lambda\leq 1$. Fix a $\lambda\in \Gamma_K^\times, \lambda\leq 1$ and assume that $f$ does not satisfy the $|M|\lambda$-supremum Jacobian property on the $|M|\lambda$-twisted boxes of $\phi$, for any integer $M$. By saturation, we may then find a single pair $x\neq y\in K^\ell$ such that for any integer $M$, $x$ and $y$ are in the same $|M|\lambda$-twisted box of $\phi$, but such that 
\[
|f(x) - f(y) -  \grad f(y)\cdot(x - y) | \geq  |M|\lambda\cdot |\grad f(y) |\cdot |x-y|
\]
and $\grad f(y) \neq 0$. But this contradicts the conclusion of Theorem \ref{thm:lambdasupJacprop}, proving the result in mixed characteristic as well.
\end{proof}

\section{Product of ideal preparation}\label{sec:product.of.ideal.prep}

In this section we prove a preparation result with products of ideals. The idea is as follows. Suppose that $K$ is equipped with a $\ell$-h-minimal structure ($\ell\geq 1$) and let $\lambda\in \Gamma_K^\times, \lambda\leq 1$. If $X$ is a subset of $K$ which is $A$-definable, where $A$ is a subset of $\RV_\lambda$ with at most $\ell$ elements, then we may $\lambda$-prepare $X$ by a finite $\emptyset$-definable set. This is precisely the definition of $\ell$-h-minimality. Now suppose that $A$ has more than $\ell$ elements. Is it still possible to prepare the set $X$? Note that we cannot expect to be able to $\lambda$-prepare $X$ in this setting. However, we prove that if $A$ has $\ell+r$ elements, then $X$ is $\lambda^r$-prepared by a finite $\emptyset$-definable set.

We prove such a result more generally with parameters in $\RV_{I_i}$ for proper non-trivial ideals $I_1, ..., I_r$, and we prepare functions $f: K\to K$ instead of simply subsets of $K$.

\subsection{Statement and proof}

Let $B$ be a general ball in $K$, that is, an infinite set $B \ne K$ such that for all $x,x'\in B$ and all $y\in K$ with $|x-y|\leq |x-x'|$ one has $y\in B$. We denote by
\[
\rad B = \{|x-y|\mid x, y\in B\}
\]
the radius of the ball, considered as a cut in $\Gamma_K$. For $x\in K$ we will write $\rad (x)$ for the cut $\rad B_{< |x|}(0)$. Note that then $|x|\leq \lambda$ is equivalent to $\rad(x)\subset \rad B_{<\lambda}(0)$.

Let $I$ be a proper ideal in $\cO_K$, and $c \in K$. Then $x, x'\in K$ are in the same ball $I$-next to $c$ if and only if $x=x'=c$ or $x,x'\neq c$ and
\[
\left|\frac{x-x'}{x-c}\right| \in \rad(I) = |I|.
\]

We remark that even though property (T$\ell$) is defined using only $\lambda\leq 1$ in $\Gamma_K^\times$, it also implies the correct scaling for $I$-twisted boxes. In more detail, let $f: K^\ell\to K$ be a function which satisfies (T$\ell$) with respect to some decomposition $\chi: K^\ell\to \RV^k$. Let $F$ be a twisted box of $\chi$ and let $G$ be an $I$-twisted box of $\chi$ contained in $F$, for some proper non-trivial ideal $I$ of $\cO_K$. Assume that $f(F)$ is an open ball. Then $f(G)$ is a ball, and
\[
\rad f(G) = \radop (f(F)) \cdot |I|.
\]

\begin{defn}\label{defn.Iprep}
Let $I$ be a proper non-trivial ideal of $\cO_K$, $f: K\to K$ a function and $C\subset K$ a finite set. We say that $f$ satisfies the \emph{$I$-valuative Jacobian property with respect to $C$} if the following holds. For any ball $B$ $I$-next to $C$, there exists a $\mu_B\in \Gamma_K$ such that
\begin{enumerate}
\item for $x_1, x_2\in B$ we have $|f(x_1)-f(x_2)| = \mu_B|x_1-x_2|$, and
\item if $\mu_B\neq 0$ and if $B'\subset B$ is any ball then $f(B')$ is also a ball, with $\rad f(B') = \mu_B \rad B'$.
\end{enumerate}
\end{defn}

We remark that the $1$-valuative Jacobian property is simply the valuative Jacobian property from~\cite[Cor.\,3.1.6]{CHR}. Also, if $f$ is the characteristic function of a set $X$, then $f$ satisfying the $I$-valuative Jacobian property with respect to $C$ is the same as $X$ being $I$-prepared by $C$.

Note that in the following theorem, we do not need the technical condition  of $\Gamma_K$-open ideals from \cite[Def.\,4.2.1]{CHR} and \cite[Thm.\,4.2.3]{CHR}.

\begin{thm}[Preparing with product ideals]\label{thm:crit-ell-h} 
Let $\cT$ be an $\ell$-h-minimal theory of equicharacteristic zero ($\ell\geq 1$) and let $K$ be a model of $\cT$. Let $I_1\subset I_2\subset ...\subset I_r$ be proper non-trivial $\emptyset$-definable ideals in $\cO_K$ with $r\geq \ell$ and take $\xi_i\in \RV_{I_i}$ for every $i$. Put $A=\{\xi_1, ..., \xi_r\}$. Let $f: K\to K$ be an $\cL(A)$-definable function. Then there exists a finite $\emptyset$-definable $C$ such that $f$ satisfies the $I_1\cdot I_{\ell+1}\cdots I_r$-valuative Jacobian property with respect to $C$.
\end{thm}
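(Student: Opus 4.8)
The plan is to induct on $r - \ell$, using $\ell$-h-minimality directly for the base case $r = \ell$ and peeling off one ideal at a time in the inductive step, following the same pattern as the proof of Lemma~\ref{lem: Tell,D to hmixmin} and the coarsening argument of Theorem~\ref{thm: coarsening the valuation}. When $r = \ell$, the parameter set $A = \{\xi_1, \ldots, \xi_\ell\}$ has exactly $\ell$ elements. I would first $\RV$-expand the language if necessary (invoking Theorem~\ref{thm:preservation.RV.expansion}) so that $\acl = \dcl$, which is harmless since the finite $\emptyset$-definable set obtained can always be pulled back to the original language by \cite[Cor.\,4.1.17]{CHR} and \cite[Cor.\,2.6.10]{CHR} at the end. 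Then each $I_i$-twisted box direction gives a ``twisted'' version of $\ell$-h-minimal preparation: applying property~(T$\ell$) together with family preparation under $0$-h-minimality \cite[Prop.\,2.6.2]{CHR} to $f$ viewed as a family $(f_y)_{y \in K^\ell}$ over the fibre $Y = \prod_i \rv_{I_i}^{-1}(\xi_i)$, and repeating the argument of Lemma~\ref{lem: Tell,D to hmixmin} with $\lambda$-twisted boxes replaced by $I_i$-twisted boxes (legitimate by the remark preceding Definition~\ref{defn.Iprep}, which guarantees the correct radius scaling $\rad f(G) = \radop(f(F)) \cdot |I|$ for $I$-twisted boxes). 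This yields a finite $\emptyset$-definable $C$ such that $f$ satisfies the $I_1$-valuative Jacobian property with respect to $C$; note $I_1 \cdot I_{\ell+1} \cdots I_r = I_1$ when $r = \ell$.

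**Inductive step.** Suppose $r > \ell$ and the result holds for $r - 1$. Write $A = A_0 \cup \{\xi_r\}$ with $A_0 = \{\xi_1, \ldots, \xi_{r-1}\}$ and view $f$ as an $\cL(A_0 \cup \RV)$-definable family $(f_y)_y$ with $y \in K$ such that $f_y = f$ when $y \in \rv_{I_r}^{-1}(\xi_r) =: Y$. For each $y$, the function $f_y$ is $\cL(A_0 \cup \{y\})$-definable with only $r-1$ ``ideal parameters'' plus the field parameter $y$; by induction (after adding $y$ as a constant, which preserves $\ell$-h-minimality by \cite[Lem.\,2.4.1]{CHR}), there is a finite $y$-definable set $C_y$ such that $f_y$ satisfies the $I_1 I_{\ell+1} \cdots I_{r-1}$-valuative Jacobian property with respect to $C_y$. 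By compactness, arrange $(C_y)_y$ to be a $\emptyset$-definable family, then apply Lemma~\ref{lem:finite.set.to.functions} to replace it by a $\emptyset$-definable family of functions $g_\eta : Y_\eta \to K$, $\eta \in \RV^k$, with $\bigsqcup_\eta \operatorname{graph} g_\eta = \bigcup_y \{y\} \times C_y$. Using family preparation and the Jacobian property \cite[Prop.\,2.6.2, Lem.\,2.8.5]{CHR}, take a finite $\emptyset$-definable $D \ni 0$ which $1$-prepares all $Y_\eta$, which $1$-prepares all $Z_\eta = \{y \in Y_\eta \mid g_\eta(y) \in X_y\}$ where relevant (and more generally suitably prepares the graph data of $f_y$ on the $g_\eta$'s), and such that each $g_\eta$ satisfies the valuative Jacobian property on balls $1$-next to $D$ — hence on balls $I$-next to $D$ for any proper ideal $I$, so $g_\eta$ maps balls $I_r$-next to $D$ inside $Y_\eta$ to balls scaling radii by $|I_r|$, and within those, balls of smaller radius correctly. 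Finally choose a finite $\emptyset$-definable $C$ that $1$-prepares every $g_\eta(B)$ for $B$ a ball $1$-next to $D$ inside $Y_\eta$; the valuative Jacobian property then upgrades this to preparation at the $|I_r|$-scale.

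**Combining the scales.** The verification that this $C$ works mimics the final part of Lemma~\ref{lem: Tell,D to hmixmin}: given $x$ with $f(x)$ a value to be separated (or two points $x, x'$ distinguished by $f$), let $B_1$ be the appropriate ball at the scale $|I_1 I_{\ell+1} \cdots I_r|$ determined by the target data; fix $y_0 \in Y$; since $C_{y_0}$ prepares $f_{y_0} = f$ at scale $|I_1 I_{\ell+1} \cdots I_{r-1}|$, some $g_\eta(y_0)$ lands in the enlargement of $B_1$ by the factor $|I_r|^{-1}$; the valuative Jacobian property for $g_\eta$ shows $g_\eta(Y) = g_\eta(B_r)$ (with $B_r$ the ball $I_r$-next to $D$ through $y_0$) is a ball of radius $\leq |I_r|$ times that enlargement, hence fits inside the $|I_r|$-shrinkage back to the correct scale; and $1$-preparation of $g_\eta(B)$ plus the Jacobian scaling produces a $d \in C$ with $|g_\eta(y_0) - d| \cdot |I_1 I_{\ell+1} \cdots I_r| \leq$ the target radius, whence by the ultrametric inequality $|x - d|$ times the product ideal norm is small enough. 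The main obstacle — and the place where $\ell$-h-minimality (rather than $1$-h-minimality) is genuinely needed — is the base case $r = \ell$: one must check that the argument of Lemma~\ref{lem: Tell,D to hmixmin}, which handled $\ell$ parameters from a single $\RV_\lambda$, goes through verbatim when the $\ell$ parameters come from $\ell$ different (nested) ideals $I_1 \subset \cdots \subset I_\ell$, i.e.\ that property~(T$\ell$) applied to the family $(c_i)$ of candidate centers gives simultaneous control of all the $I_i$-twisted-box directions at once. This should follow because (T$\ell$) produces a single decomposition $\chi$ of $K^\ell$ whose $\lambda$-twisted boxes (for every $\lambda$, hence for every ideal componentwise) are controlled, and the nesting $I_1 \subset \cdots \subset I_\ell$ means the relevant product $I_1 \cdot I_{\ell+1} \cdots I_r$ absorbs only the coarsest contribution $I_1$ at this stage — but making the bookkeeping of the mixed scales precise is the step requiring the most care.
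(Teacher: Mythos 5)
Your overall architecture matches the paper's: induct on $r$ with base case $r=\ell$, view $f$ as a $\emptyset$-definable family $(f_y)_y$ equal to $f$ on the fibre $Y$ of the leading-term parameters, convert the family of preparing sets $(C_y)_y$ into an $\RV$-parametrized family of functions $g_\eta$ (the paper uses Lemma~\ref{lem:finite.set.to.functions} where you invoke $\acl=\dcl$; either works), and in the inductive step peel off $\xi_r$ alone, exactly as in the paper's Step 4. However, the difficulty you flag at the end --- simultaneous control of $\ell$ different ideal directions in the base case --- is resolved by a one-line reduction you missed: since $I_1\subset I_i$, the natural map $\RV_{I_1}\to \RV_{I_i}$ is surjective, so each $\xi_i$ with $i\le \ell$ may be replaced by a lift in $\RV_{I_1}$, and one may assume $I_1=\cdots=I_\ell=:I$ from the outset. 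Only a single ideal ever enters the base case, and the product $I_1\cdot I_{\ell+1}\cdots I_r$ records exactly that the first $\ell$ parameters jointly cost one factor of $I_1$.

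The more serious gap is that your base case only delivers the analogue of Lemma~\ref{lem: product prep for sets}, i.e.\ $I$-preparation of sets, which is condition (1) of Definition~\ref{defn.Iprep} for indicator functions. The theorem asks for the $I$-valuative Jacobian property of an arbitrary definable $f$: the exact multiplicative relation $|f(x_1)-f(x_2)|=\mu_B|x_1-x_2|$ on every ball $I$-next to $C$, together with the ball-image scaling for sub-balls. Repeating the argument of Lemma~\ref{lem: Tell,D to hmixmin} with $\lambda$-twisted boxes replaced by $I$-twisted boxes does not produce this: that argument separates $X$ from its complement and says nothing about the magnitude of $|f(x_1)-f(x_2)|$. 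The paper's proof needs genuinely more here: it requires the composed maps $\tilde f_\eta\colon y\mapsto f_y(g_\eta(y))$ on $K^\ell$ to satisfy (T$\ell$) with respect to the same decomposition as the $g_\eta$; it first reduces, via the already-proved set case applied to $W_\infty$ and to an $\RV$-valued injection parametrizing fibres of $f$, to $f$ being constant or injective on balls $I$-next to $C$; and it then compares the scaling factors of $g_\eta$ and $\tilde f_\eta$ on $I$-twisted boxes (the $\lambda_0=\tilde\lambda_0$ argument) to show the ratio $\mu_B$ is well defined, with a further separate step for the image-of-sub-balls condition. None of this appears in your proposal, and it is the substantive content beyond set preparation. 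The inductive step has a smaller omission of the same kind: the paper must distinguish the case where the ball $B$ misses every $g_\eta(Y)$ (so that $B$ sits inside a single ball $J$-next to $C_{y_0}$ and the induction hypothesis applies verbatim) from the case where some $g_\eta(y_0)\in B$ (where one composes the two valuative Jacobian properties); your final paragraph conflates these two regimes.
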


The proof of this theorem is quite long and technical. A mixed characteristic will be deduced by compactness afterwards. Let us also mention that the fact that $\ell$-h-minimality is preserved under coarsening of the valuation as in Theorem~\ref{thm: coarsening the valuation} can also be deduced from this result. During the proof, we will first prove the following weaker version, for $r=\ell$ and when $f$ is the characteristic function of an $A$-definable set. 

\begin{lem}[$I$-preparation]\label{lem: product prep for sets}
Let $\cT$ be an $\ell$-h-minimal theory of equicharacteristic zero ($\ell\geq 1$) and let $K$ be a model of $\cT$. Let $I$ be a proper non-trivial $\emptyset$-definable ideal in $\cO_K$ and let $A\subset \RV_I$ with $\#A\leq  \ell$. If $X$ is an $\cL(A)$-definable subset of $K$, then there exists a finite $\emptyset$-definable set $C\subset K$ which $I$-prepares $X$.
\end{lem}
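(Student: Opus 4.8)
The plan is to reduce the statement, by induction on $\#A = s \leq \ell$, to the case where $X$ is definable over $\RV$ together with a single parameter in $\RV_I$, and then finally to a genuinely $\ell$-h-minimal preparation statement. First I would note that $\ell$-h-minimality is preserved under adding constants from $K \cup \RV$, so it suffices to treat the case $A = \{\xi_1, \ldots, \xi_s\}$ with all $\xi_i \in \RV_I$. The base case $s = 0$ is trivial (and $s=1$ is almost the definition). So suppose the result holds for $s-1$ parameters from $\RV_I$ (over any allowed base, including $\RV$-parameters), and write $A = A_0 \cup \{\xi_s\}$ with $A_0 = \{\xi_1, \ldots, \xi_{s-1}\}$.

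The main idea is to view $X$ as a member of an $(A_0 \cup \RV)$-definable family $(X_y)_{y \in K}$, chosen so that $X_y = X$ whenever $y \in Y := \rv_I^{-1}(\xi_s)$; this is possible since $X$ is $\cL(A_0 \cup \{\xi_s\})$-definable and $\xi_s = \rv_I(y)$ for such $y$. By the induction hypothesis applied with base $A_0 \cup \RV \cup \{y\}$ (allowed, since this uses $s-1$ parameters from $\RV_I$ over a base containing $K$-elements and $\RV$-elements), for each $y \in K$ there is a finite $(A_0 \cup \RV \cup \{y\})$-definable set $C_y$ that $I$-prepares $X_y$. By compactness we may take $(C_y)_{y \in K}$ to be an $(A_0 \cup \RV)$-definable family. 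Now apply Lemma~\ref{lem:finite.set.to.functions} (valid since $0$-h-minimality holds, being a consequence of $\ell$-h-minimality) to replace this family of finite sets by an $(A_0 \cup \RV)$-definable family of sets $Y_\eta \subset K$ and functions $g_\eta : Y_\eta \to K$, $\eta \in \RV^k$, so that $\bigsqcup_\eta \operatorname{graph} g_\eta = \bigcup_{y} \{y\} \times C_y$.

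Next I would prepare, using $0$-h-minimality and the Jacobian property for finitely (or $\RV$-many) maps at once, a finite $(A_0 \cup \RV)$-definable set $D$ which: $1$-prepares every $Y_\eta$; makes every $g_\eta$ satisfy the valuative Jacobian property (\cite[Cor.\,3.1.6]{CHR}, or its $I$-version discussed in the paper) on balls $1$-next to $D$; and $1$-prepares the sets $Z_\eta = \{y \in Y_\eta \mid g_\eta(y) \in X_y\}$. Note that the valuative Jacobian property automatically upgrades to the $I$-valuative Jacobian property, i.e.\ $g_\eta$ maps balls $I$-next to $D$ to balls scaling radii by the correct factor involving $|I|$. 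Having done this, the key dichotomy is as in the proof of Theorem~\ref{thm: coarsening the valuation} and of Lemma~\ref{lem: Tell,D to hmixmin}: take $x \in X$, $x' \notin X$, let $B_1$ be the smallest closed ball containing both, fix $y_0 \in Y$; since $C_{y_0}$ $I$-prepares $X = X_{y_0}$ there is $\eta$ with $y_0 \in Y_\eta$ and $g_\eta(y_0)$ in (a suitable enlargement related to $I$ of) $B_1$. Since $D$ $1$-prepares $Z_\eta$ and $X_y$ is constant $=X$ on $Y$, the image $g_\eta(Y)$ is either inside $X$ or disjoint from $X$, while $B_1$ is neither; hence $g_\eta(Y)$ is a proper subball of $B_1$, giving the needed radius bound. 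Then $C := $ a finite $(A_0 \cup \RV)$-definable set $1$-preparing all images $g_\eta(B)$ of balls $1$-next to $D$ contained in $Y_\eta$ (obtained by family preparation, \cite[Prop.\,2.6.2]{CHR}) will, via the $I$-valuative Jacobian property for $g_\eta$, $I$-prepare $X$. Finally, removing the $\RV$-parameters from $C$ using \cite[Cor.\,2.6.10]{CHR} yields a finite $\emptyset$-definable set still $I$-preparing $X$.

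The main obstacle I anticipate is bookkeeping the transition from $\lambda$-preparation statements (where $\lambda \in \Gamma_K^\times$) to $I$-preparation: $\ell$-h-minimality and property (T$\ell$) are phrased with $\lambda \leq 1$ in $\Gamma_K^\times$, but $I$ need not be an open ball, so $|I|$ is a cut, not an element. The remedy — already flagged in the paragraph of the excerpt remarking that (T$\ell$) implies the correct scaling for $I$-twisted boxes — is that the valuative Jacobian property, once established for $g_\eta$ on balls $1$-next to $D$, gives $f(B') $ a ball with $\rad f(B') = \mu_B \rad B'$ for \emph{every} subball $B'$, hence in particular for balls $I$-next to $D$; so one never actually needs an $\RV_I$-version of $\ell$-h-minimality itself, only its consequences for functions. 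The second delicate point, as always in these arguments, is ensuring that the various unions over $\RV$-parameters ($\bigcup_\eta \{c_\eta\}$, $\bigcup_\eta C_\eta$, the image of $\eta \mapsto$ a point) stay finite; this is handled by \cite[Cor.\,2.6.7]{CHR} together with uniform boundedness, exactly as in Lemma~\ref{lem:TellD.is.0hmin} and Lemma~\ref{lem: Tell,D to hmixmin}.
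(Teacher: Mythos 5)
There is a genuine gap in the inductive step, and it is located precisely where you fix $y_0\in Y=\rv_I^{-1}(\xi_s)$ and try to find a point of $C_{y_0}$ inside $B_1$ (the smallest closed ball containing $x$ and $x'$). Your induction hypothesis only gives that $C_{y_0}$ \emph{$I$-prepares} $X=X_{y_0}$, and $I$-preparation merely produces a $c_0\in C_{y_0}$ with $|x-x'|\notin |I|\cdot|x-c_0|$; since $|I|$ is a cut strictly below $1$, this places $c_0$ only in the $|I|^{-1}$-enlargement of $B_1$, not in $B_1$ itself (this is the ``suitable enlargement'' you flag, but it is not a bookkeeping issue). If $c_0\notin B_1$, then $g_\eta(Y)$ is a ball disjoint from $B_1$, the dichotomy ``$g_\eta(Y)$ is contained in or disjoint from $X$, while $B_1$ is neither'' yields no upper bound on $\rad g_\eta(Y)$ in terms of $|x-x'|$, and the point $c\in C$ with $|c_0-c|\leq\radop g_\eta(F)$ then only satisfies $|x-x'|\notin |I|^2|x-c|$ rather than $|x-x'|\notin |I|\,|x-c|$. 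So each peeled-off parameter costs a factor of $I$, and your induction proves at best $I^s$-preparation for $s$ parameters --- which is exactly the loss that Theorem~\ref{thm:crit-ell-h} quantifies in its Step~4, where one parameter at a time is peeled off and the preparing ideal degrades to the product $I_1\cdot I_{\ell+1}\cdots I_r$. A decisive sanity check: your argument invokes only $0$- and $1$-h-minimality in every step (one-variable Jacobian property, family preparation, and the induction hypothesis, which bottoms out at a single $\RV_I$-parameter), so if it were correct it would show that $1$-h-minimality implies $\ell$-h-minimality for every $\ell$, settling the first open question of Section~\ref{sec:further.questions}.

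The paper's proof (Steps~1 and~2a of Theorem~\ref{thm:crit-ell-h}) avoids this by replacing all $\ell$ parameters simultaneously by a tuple $y\in K^\ell$, so that each fibre $X_y$ is definable \emph{without} $\RV_I$-parameters and can be genuinely $1$-prepared by $C_y$; then the witness $g_\eta(y_0)$ really does lie in $B_1$. The set $Y=\prod_i\rv_I^{-1}(\xi_i)$ becomes a single $\ell$-dimensional $I$-twisted box $F'$ of a decomposition of $K^\ell$, and property (T$\ell$) for the $\ell$-variable functions $g_\eta$ and $\tilde f_\eta:y\mapsto f_y(g_\eta(y))$ --- obtained from Lemma~\ref{lem: ellhmin to Tell, D}, which is where $\ell$-h-minimality is actually used --- guarantees that $g_\eta(F')$ is a ball of radius $\radop(g_\eta(F))\cdot|I|$, with a \emph{single} factor of $|I|$ rather than one per parameter. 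Your one-dimensional reduction cannot reproduce this, so the $\ell$-dimensional decomposition is not an optional convenience but the essential mechanism of the proof.
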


Using compactness, we also obtain a family version.

\begin{cor}[Uniform $I$-preparation]\label{cor: product prep for families}
Let $\cT$ be an $\ell$-h-minimal theory of equicharacteristic zero ($\ell\geq 1$) and let $K$ be a model of $\cT$. Let $I$ be a proper non-trivial $\emptyset$-definable ideal in $\cO_K$. Let
\[
W\subset K\times \RV_I^\ell\times \RV^n
\]
be $\emptyset$-definable, for some integer $n$. Then there exists a finite $\emptyset$-definable $C$ uniformly $I$-preparing $W$.
\end{cor}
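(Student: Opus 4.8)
The plan is to deduce Corollary~\ref{cor: product prep for families} from the non-uniform statement, Lemma~\ref{lem: product prep for sets}, by a routine compactness argument. The first observation is that for a fixed $(\xi,\zeta)\in\RV_I^\ell\times\RV^n$ the fibre $W_{\xi,\zeta}=\{x\in K\mid (x,\xi,\zeta)\in W\}$ is definable using only the $\leq\ell$ parameters $\xi_1,\dots,\xi_\ell$ from $\RV_I$ together with the parameters $\zeta_1,\dots,\zeta_n$ from $\RV$. In equicharacteristic zero, parameters from $\RV=\RV_1$ do not count towards the $\ell$-parameter budget (this is exactly how the symbol $\RV_n$ enters the definition of $\ell$-\hmix-minimality, and the proof of Lemma~\ref{lem: product prep for sets} accommodates such extra $\RV$-parameters while still producing an $\emptyset$-definable preparing set). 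Hence Lemma~\ref{lem: product prep for sets} applies to $W_{\xi,\zeta}$ and yields a finite $\emptyset$-definable set $C_{\xi,\zeta}\subset K$ which $I$-prepares it; the content of the corollary is that the formula defining $C_{\xi,\zeta}$ can be chosen independently of $(\xi,\zeta)$.

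For the compactness step I would argue by contradiction. Using that $I$-preparation of a uniformly definable family by a fixed finite set is a definable condition (\cite[Lem.\,2.4.2]{CHR}), for each $\emptyset$-definable $D\subset K$ let $\chi_D(\xi,\zeta)$ be the formula asserting that $D$ is finite and $I$-prepares $W_{\xi,\zeta}$. Suppose no finite $\emptyset$-definable set uniformly $I$-prepares $W$. Given finitely many $\emptyset$-definable sets $D_1,\dots,D_N$, let $D$ be the union of those among them that are finite; then $D$ is a finite $\emptyset$-definable set, which by assumption does not uniformly $I$-prepare $W$, so there is a pair $(\xi,\zeta)$ with $D$ not $I$-preparing $W_{\xi,\zeta}$. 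Since enlarging a preparing set keeps it a preparing set, this $(\xi,\zeta)$ satisfies $\neg\chi_{D_j}(\xi,\zeta)$ for every $j$. Hence the partial type $\{\neg\chi_D(\xi,\zeta)\mid D\subset K\text{ }\emptyset\text{-definable}\}$ is finitely satisfiable, so it is realized by some $(\xi^*,\zeta^*)$ in a sufficiently saturated elementary extension. Then $W_{\xi^*,\zeta^*}$ is $I$-prepared by no finite $\emptyset$-definable set, contradicting Lemma~\ref{lem: product prep for sets}. This produces a single formula defining a finite set of bounded size that uniformly $I$-prepares $W$, and by elementarity that formula works in every model of $\cT$.

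I expect the only point needing care is the one flagged at the outset: one must have Lemma~\ref{lem: product prep for sets} available in the (entirely routine, though not quite verbatim) form that allows arbitrarily many extra parameters from $\RV$ alongside the $\leq\ell$ parameters from $\RV_I$, still with an $\emptyset$-definable preparing set — the exact analogue of the role of $\RV_n$ in the definition of $\ell$-\hmix-minimality. The remaining ingredients — closure of preparing sets under finite unions and supersets, definability of $I$-preparation, and the saturation bookkeeping — are standard. An alternative to the compactness argument would be to reprove Lemma~\ref{lem: product prep for sets} carrying the family parameters $\zeta$ through the induction, replacing each appeal to a single-set preparation result by its uniform counterpart (e.g.\ \cite[Prop.\,2.6.2]{CHR} and Lemma~\ref{lem:uniform.prep.ellhmin}).
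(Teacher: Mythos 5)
Your proposal is correct and takes essentially the same route as the paper, whose entire proof of this corollary is the single sentence that it ``follows from Lemma~\ref{lem: product prep for sets} using compactness in the same way as \cite[Prop.\,2.6.2]{CHR}''. The one caveat you flag --- that Lemma~\ref{lem: product prep for sets} must be available with the extra $\RV^n$-parameters alongside the $\leq\ell$ parameters from $\RV_I$, which its proof accommodates since $\RV$-parameters are free both in the definition of $\ell$-h-minimality and in every preparation result invoked there --- is a genuine but routine point that the paper's one-line proof silently elides.
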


\begin{proof}
This follows from Lemma \ref{lem: product prep for sets} using compactness in the same way as \cite[Prop.\,2.6.2]{CHR}.
\end{proof}

\begin{proof}[Proof of \Cref{thm:crit-ell-h}] \textbf{Step 1.} Without loss of generality, we may assume that $I_1=...=I_\ell=:I$. We induct on $r$, the base case being $r=\ell$. Write $A=\{\xi_1, ..., \xi_\ell\}$. Consider $f$ as a $\emptyset$-definable family $f_y: K\to K$, for $y\in K^\ell$ with $f_y=f$ for $y\in Y := \prod_i \rv_{I}^{-1}(\xi_i)$. By 1-h-minimality \cite[Lem.\,2.8.5]{CHR} there is for every $y\in K^\ell$ a finite $y$-definable $C_y$ such that $f_y$ satisfies the $1$-valuative Jacobian property with respect to $C_y$. By compactness we can assume that $C_y$ is a $\emptyset$-definable family with a parameter $y\in K^\ell$. We apply Lemma~\ref{lem:finite.set.to.functions} to find a $\emptyset$-definable family $(Y_\eta)_{\eta\in \RV^k}$ of subsets of $K^\ell$, together with a $\emptyset$-definable family of maps $g_\eta: Y_\eta\to K$ such that
\[
\bigcup_{y\in K^\ell} \{y\}\times C_y = \bigsqcup_{\eta\in \RV^k} \operatorname{graph} g_\eta.
\]

Using \cite[Thm.\,5.2.4, Add.\,1]{CHR} and Lemma \ref{lem: ellhmin to Tell, D}, take a decomposition $\chi: K^\ell\to \RV^m$ of $K^\ell$ with the following properties.
\begin{enumerate}
\item For every twisted box $F$ of $\chi$ and for every $\eta\in \RV^k$, either $F\subset Y_\eta$ or $F\cap Y_\eta = \emptyset$.
\item The coordinate-wise map $\rv:K^\ell\to \RV^\ell$ is constant on the twisted boxes of $\chi$. 
\item For every $\eta\in \RV^k$, the map $g_\eta$ satisfies (T$\ell$) with respect to $\chi$. 
\item For every $\eta\in \RV^k$ the $\eta$-definable map
\[
\tilde{f}_\eta: K^\ell\to K: y\mapsto f_y(g_\eta(y))
\]
satisfies (T$\ell$) with respect to $\chi$. 
\end{enumerate}
By preparing families \cite[Prop\,2.6.2]{CHR}, we find a finite $\emptyset$-definable set $C\subset K$ which $1$-prepares $g_\eta(\chi^{-1}(\zeta))$, for all $\eta\in \RV^k, \zeta\in \RV^m$.

\textbf{Step 2a.} Let us first assume that $f$ is the indicator function of some $A$-definable set $X\subset K$. We claim that then $f$ satisfies the $I$-valuative Jacobian property with respect to $C$, or equivalently, $C$ $I$-prepares $X$. Suppose not, then there is a ball $B$ $I$-next to $C$, and elements $x, x'\in B$ such that $f(x)=0$ while $f(x')=1$. Let $B_1$ be the smallest closed ball containing $x$ and $x'$ and fix $y_0\in Y$. Since $C_{y_0}$ 1-prepares $X$, $C_{y_0}\cap B_1$ cannot be empty. Therefore, there exists some $\eta\in \RV^k$ such that $y_0\in Y_\eta$ and $g_\eta(y_0)\in B_1$. Fix this $\eta$ for the rest of this step.

Let $F$ be the twisted box of $\chi$ containing $y_0$, and let $F'$ be the $I$-twisted box of $\chi$ containing $y_0$. Since $\rv: K^\ell\to \RV^\ell$ is constant on $F$, also $\rv_I$ is constant on $F'$ and hence $F'\subset Y$. If $g_\eta$ is constant on $F'$, then there exists a finite $\emptyset$-definable set $C'$ containing this singleton $g_\eta(F')$. Indeed, the set of $\zeta\in \RV^m$ for which $g_\eta$ is constant on $\chi^{-1}(\zeta)$ is $\eta$-definable, and the resulting set of values $g_\eta(\chi^{-1}(\zeta))$ is finite and $\eta$-definable. By taking the union over all $\eta\in \RV^k$ and using~\cite[Cor.\,2.6.7]{CHR}, we obtain a finite $\emptyset$-definable set $C'$ such that $f$ has the $1$-valuative Jacobian property with respect to $C'$, so certainly also the $I$-valuative Jacobian property. So we can assume that $g_\eta$ is non-constant on $F'$. We claim that $g_\eta(F')$ is a proper subset of $B_1$. Suppose towards a contradiction that this is not true. Note that $g_\eta(F')$ and $B_1$ are both balls containing $g_\eta(y_0)$. Our assumption then implies that $B_1\subset g_\eta(F')$. Property (T$\ell$) for $\tilde{f}_\eta$ tells us that $\tilde{f}_\eta(F') = f(g_\eta(F'))$ is either a ball, or a singleton. But $x,x'$ are both in $g_\eta(F')$, so that $\tilde{f}_\eta(F') = \{0,1\}$ which is not a ball. This is a contradiction and so 
\[
g_\eta(F')\subsetneq B_1.
\]
Since $C$ 1-prepares $g_\eta(F)$, there is some $c\in C$ for which
\[
|g_\eta(y_0)-c|\leq \radop g_\eta(F).
\]
We conclude, by property (T$\ell$) for $g_\eta$, that
\[
|g_\eta(y_0)-c|\cdot |I|\subset \radop (g_\eta(F))\cdot |I| = \rad g_\eta(F')\subsetneq \rad B_1.
\]
Using that $x$ and $g_\eta(y_0)$ are both in $B_1$ and that $I$ is a proper ideal gives that
\[
|x-c|\cdot |I|\subset \max\{|x - g_\eta(y_0)|, |g_\eta(y_0) - c|\}\cdot |I| \subsetneq \rad B_1.
\]
Thus we have that $|x-x'|/|x-c|\notin |I|$, since $|x-x'|$ is the largest element of $\rad B_1$. In other words, $x$ and $x'$ are not in the same ball $I$-next to $C$, contrary to assumption. 

This proves Lemma \ref{lem: product prep for sets} and hence we can use Corollary \ref{cor: product prep for families} in the rest of the proof.

\textbf{Step 2b.} Now assume that $f$ is just any $A$-definable function. By following \cite[Lem.\,2.8.2]{CHR}, we will enlarge $C$ so that $f$ is constant or injective on balls $I$-next to $C$. The set $W_\infty$ of $w\in K$ such that $f^{-1}(w)$ is infinite is a finite $A$-definable set, in view of \cite[Lem.\,2.8.1]{CHR} (or alternatively by Theorem~\ref{thm: equivalences} and property (D)). By Lemma~\ref{lem: product prep for sets} we can find a finite $\emptyset$-definable set $C_0$ which $I$-prepares $f^{-1}(w)$ for any $w\in W_\infty$. 

If $w\in K$ is not in $W_\infty$, then $f^{-1}(w)$ is finite. So by \cite[Lem.\,2.5.3]{CHR} there exists an $A$-definable family of injections $h'_w: f^{-1}(w) \to \RV^n$ for some integer $n$. Put $h_w'(x)=0$ if $w\in W_\infty$ and $x\in f^{-1}(w)$ and consider the $A$-definable function 
\[
g: K\to \RV^n: x\mapsto h'_{f(x)}(x).
\]
Use Corollary \ref{cor: product prep for families} to find a finite $\emptyset$-definable set $C_1$ which $I$-prepares this function $g$. If $B$ is a ball $I$-next to $(C_0\cup C_1)$, then $f$ is either constant or injective on $B$, see \cite[Lem.\,2.8.2]{CHR} for details. 

Using the discussion above, we now enlarge our set $C$ such that $f$ is also constant or injective on balls $I$-next to $C$. We claim that this $f$ satisfies the $I$-valuative Jacobian property with respect to $C$. Suppose not, then there is a ball $B$ $I$-next to $C$, and distinct elements $x, x', x''\in B$ such that
\begin{align*}
|f(x)-f(x')| &= \mu_1|x-x'| \\
|f(x)-f(x'')| &= \mu_2|x-x''|,
\end{align*}
for $\mu_1, \mu_2\in \Gamma_K$, $\mu_1\neq \mu_2$. If $f$ were constant on $B$ then $\mu_1=\mu_2 = 0$. So we can assume that $f$ is injective on $B$. Let $|x-x'|\geq |x-x''|$ and let $B_1$ be the smallest closed ball containing $x$ and $x'$. Fix $y_0\in Y$. Then $f$ satisfies the $1$-valuative Jacobian property with respect to $C_{y_0}$, so $C_{y_0}\cap B_1$ cannot be empty. Therefore, there exists some $\eta\in \RV^k$ such that $y_0\in Y_\eta$ and $g_\eta(y_0)\in B_1$. We fix this $\eta$ for the rest of this step.

Let $F$ be the twisted box of $\chi$ containing $y_0$, and let $F'$ be the $I$-twisted box of $\chi$ containing $y_0$. Since $\rv: K^\ell\to \RV^\ell$ is constant on $F$, also $\rv_I$ is constant on $F'$ and hence $F'\subset Y$. If $g_\eta$ is constant on $F'$, then as before there exists a finite $\emptyset$-definable set $C'$ containing this singleton $g_\eta(F')$. The map $f$ will satisfy the $1$-valuative Jacobian property with respect to $C'$, so certainly also the $I$-valuative Jacobian property. So we can assume that $g_\eta$ is non-constant on $F'$. We claim that $g_\eta(F')$ is a proper subset of $B_1$. Suppose towards a contradiction that this is not true. Note that by property (T$\ell$) $g_\eta(F')$ and $B_1$ are both balls containing $g_\eta(y_0)$. Our assumption then implies that $B_1\subset g_\eta(F')$. Define
\begin{align*}
& Z = g_\eta^{-1}(x)\cap F', Z' = g_\eta^{-1}(f(x'))\cap F' \\
& \tilde{Z} = \tilde{f}_\eta^{-1}(x)\cap F', \tilde{Z}' = \tilde{f}_\eta^{-1}(f(x'))\cap F'.
\end{align*}
By property (T$\ell$), $g_\eta(F)$ and $\tilde{f}_\eta(F)$ are open balls, say of radii $\mu_0$ and $\tilde{\mu}_0$. Define $\lambda_0 = |x-x'|/\mu_0$ and $\tilde{\lambda}_0 = |f(x)-f(x')|/\tilde{\mu}_0$. Note that 
\[
\mu_1 = \frac{\tilde{\lambda}_0}{\lambda_0}\cdot \frac{\tilde{\mu}_0}{\mu_0}.
\]
The goal is to show that $\tilde{\lambda}_0 = \lambda_0$. The element $\lambda_0$ (resp.\ $\tilde{\lambda}_0$) is the largest element of $\Gamma_K^\times$ such that for some $z\in Z$ (resp.\ $z\in \tilde{Z}$), the $\lambda_0$-twisted box $G$ of $\chi$ (resp.\ $\tilde{\lambda}_0$-twisted box) containing $z$ is disjoint from $Z'$ (resp.\ $\tilde{Z}'$). Indeed, if $G$ is the $\lambda_0$-twisted box of $\chi$ around $z$, then by (T$\ell$) the image $g_\eta(G)$ is an open ball of radius $|x-x'|$ containing $x = g_\eta(z)$. Since $Z'\subset \tilde{Z}'$ we immediately obtain that $\tilde{\lambda}_0\leq \lambda_0$. For the other inequality, fix $z\in Z$ and let $G$ be the $\lambda_0$-twisted box of $\chi$ containing $z$. Then $G$ is a subset of $F'$, since $g_\eta(G) = B_{<|x-x'|}(x) \subset B_1\subset g_\eta(F')$. Now suppose that $G$ contains an element from $\tilde{Z}'$, say $y\in \tilde{Z}'$. We have that 
\[
g_\eta(y)\in g_\eta(G) = B_{<|x-x'|}(x).
\]
On the other hand, we have that $f$ is injective on $B_1$ and $f(x') = \tilde{f}_\eta(y) = f(g_\eta(y))$. So $x' = g_\eta(y)$ which is impossible. We conclude that $\tilde{\lambda}_0 = \lambda_0$ and so $\mu_1 = \tilde{\mu}_0 / \mu_0$. However, exactly the same argument works for $\mu_2$, showing that $\mu_1 = \mu_2$. This is a contradiction and we conclude that $g_\eta(F')\subsetneq B_1$. 

The rest of the argument for this step is identical to step 2a, proving that (1) in Definition \ref{defn.Iprep} is satisfied.

\textbf{Step 3.} Let $B$ be a ball $I$-next to $C$. We now prove that $f$ maps balls contained in $B$ to balls of the correct radius, which is (2) in Definition~\ref{defn.Iprep}. If $f$ is constant then this is obvious, so assume that $f$ is injective on $B$. Let $B'\subset B$ be a ball, which is $J$-next to $C$ for some ideal $J\subset I$ of $\cO_K$. Firstly, for $x, x'\in B'$ we have that
\[
|f(x) - f(x')| = \mu |x-x'| \in \mu \rad B',
\]
for some fixed $\mu\in \Gamma_K$ (depending only on $B$). So if we let $B_1$ be the smallest ball in $K$ containing $f(B')$ then this is a ball of radius $\mu \rad B'$. Suppose towards a contradiction that $f(B')$ is a proper subset of $B_1$. In other words, $f(B')$ is not a ball. Let $y_0\in Y$. Since $f$ satisfies the $1$-valuative Jacobian property with respect to $C_{y_0}$, we must have that $C_{y_0}\cap B'$ is non-empty. So there exists $\eta\in \RV^k$ such that $y_0\in Y_\eta$ and such that $g_\eta(y_0)\in B'$. Let $G$ be the $J$-twisted box of $\chi$ containing $y_0$. By property (T$\ell$) for $g_\eta$, we have that $g_\eta(G) = B'$. Since $J\subset I$ and since $\rv$ is constant on the twisted boxes of $\chi$ we have that $G\subset Y$. But then (T$\ell$) for $\tilde{f}_\eta$ implies that 
\[
f(B') = f(g_\eta(G)) = \tilde{f}_\eta(G)
\]
is a ball. This is the desired contradiction and proves the case $r = \ell$.

\textbf{Step 4.} We now assume that $r > \ell$. Write $A = A'\cup \{\xi\}$ with $\#A' = r-1$ and $\xi = \xi_r$. Define $I = I_r$ and $J = I_1\cdot I_{\ell+1}\cdots I_{r-1}$. Then we want to find a finite $\emptyset$-definable set $f$ satisfies the $IJ$-valuative Jacobian property with respect to $C$. We again consider $f$ as a family of $A'$-definable functions $f_y: K\to K$ for $y\in K$ with $f_y = f$ if $y\in Y = \rv_I^{-1}(\xi)$. By induction, for every $y$ there is a finite $y$-definable set $C_y$ such that $f_y$ satisfies the $J$-valuative Jacobian property with respect to $C_y$. We apply Lemma~\ref{lem:finite.set.to.functions} again to find a $\emptyset$-definable family $(Y_\eta)_{\eta\in \RV^k}$ of subsets of $K$, together with a $\emptyset$-definable family of maps $g_\eta: Y_\eta\to K$.

Use preparation of families \cite[Prop.\,2.6.2]{CHR} to find a finite $\emptyset$-definable set $D\subset K$ with the following properties.
\begin{enumerate}
\item For every $\eta\in \RV^k$, the set $Y_\eta$ is 1-prepared by $D$.
\item For every $\eta\in \RV^k$, the function $g_\eta$ (extended by zero outside its domain) satisfies the $1$-valuative Jacobian property with respect to $D$.
\item For every $\eta\in \RV^k$, the $(A\cup\{\eta\})$-definable function
\[
\tilde{f}_\eta: K\to K: y\mapsto f_y(g_\eta(y))
\]
satisfies the $J$-valuative Jacobian property with respect to $D$. Such a set $D$ exists by induction and compactness.
\item Finally, we also require that $0\in D$.
\end{enumerate}
The set of balls 1-next to $D$ is parametrized by $\RV$-parameters, by \cite[Lem.\,2.5.4]{CHR}. Hence we can find a finite $\emptyset$-definable set $C$ in $K$ which 1-prepares $g_\eta(B)$, for every ball $B$ 1-next to $D$, and for every $\eta\in \RV^k$. We claim that $f$ satisfies the $IJ$-valuative Jacobian property with respect to $C$. If $Y\cap D\neq \emptyset$ then $f$ even satisfies the $J$-valuative Jacobian property with respect to the $\emptyset$-definable set $\cup_{d\in D} C_d$. So we can assume that $Y$ and $D$ are disjoint. Let $F$ be the ball $1$-next to $D$ containing $Y$. Let $B$ be a ball $IJ$-next to $C$. 

\textbf{Case 1:} Suppose first that for every $\eta\in \RV^k$ we have that $B\cap g_\eta(Y) = \emptyset$. Fix $\eta\in \RV^k$, $y_0\in Y$ and $x, x'\in B$. The hypothesis implies that 
\[
\rad (x-g_\eta(y_0)) \supset \rad g_\eta(Y).
\]
In other words, we have by the $1$-valuative Jacobian property of $g_\eta$ that
\[
|x-g_\eta(y_0)| \notin \rad g_\eta(Y) = |I|\radop(g_\eta(F)).
\]
By construction, the set $C$ 1-prepares $g_\eta(F)$, so there exists some $c_0\in C$ such that
\[
|c_0-g_\eta(y_0)|\leq \radop (g_\eta(F)).
\]
Combined with the above, we obtain that $|x-g_\eta(y_0)| \notin |I||c_0-g_\eta(y_0)|$. Since $I$ is a proper ideal, the triangle inequality yields that for every $\lambda \in I$
\[
\lambda|x-c_0| \leq \max\{ \lambda |x-g_\eta(y_0)|, \lambda|g_\eta(y_0)-c_0|\} \leq |x-g_\eta(y)|.
\]
Therefore, since $x$ and $x'$ are in the same ball $IJ$-next to $C$ we obtain that
\[
|x-x'|\in |IJ||x-c_0| \subset |J||x-g_\eta(y_0)|,
\]
and $x,x'$ are in the same ball $J$-next to $g_\eta(y_0)$. Since this argument worked for every $\eta\in \RV^k$, we conclude that $B$ is contained in a ball $J$-next to $C_{y_0}$ (for any $y_0\in Y$). But $f=f_{y_0}$ is prepared on such a ball by our initial construction of $C_{y_0}$.

\textbf{Case 2:} Suppose now that $g_\eta(y_0)\in B$ for some $\eta\in \RV^k$ and some $y_0\in Y$. Since $g_\eta(Y)$ is $I$-prepared by $C$, $B$ is contained in $g_\eta(Y)$. If $g_\eta(Y)$ is a singleton, then also $B$ is a singleton and there is nothing to prove. So we can assume that $g_\eta$ is injective on $F$. Let $Y'\subset Y$ be a ball $IJ$-next to $D$ such that $g_\eta(Y')\cap B\neq \emptyset$. Since $C$ $IJ$-prepares $g_\eta(Y')$ we have that $B\subset g_\eta(Y')$. Now $D$ prepares both $g_\eta$ and $\tilde{f}_\eta$ on $Y'$, so there exist $\mu_{g_\eta}, \mu_{\tilde{f}_\eta}\in \Gamma_K$ such that for $y, y'\in Y'$ we have that
\begin{align*}
|g_\eta(y)- g_\eta(y')| &= \mu_{g_\eta} |y-y'| \\
|\tilde{f}_\eta(y) - \tilde{f}_\eta(y')| &= \mu_{\tilde{f}_\eta}|y-y'|.
\end{align*}
Note that $\mu_{\tilde{g_\eta}}$ is non-zero, as $g_\eta$ is injective on $Y'$.
Let $x, x'\in B$ and take $y, y'\in Y'$ be such that $g_\eta(y) = x$ and $g_\eta(y')=x'$. Then
\begin{align*}
|f(x)-f(x')| &= |f(g_\eta(y)) - f(g_\eta(y'))| = |\tilde{f}_\eta(y) - \tilde{f}_\eta(y')| \\
	&= \mu_{\tilde{f}_\eta}|y-y'| = \frac{ \mu_{\tilde{f}_\eta} }{ \mu_{g_\eta} }|x-x'|.
\end{align*}
This is the desired situation, proving that $f$ satisfies (1) of Definition \ref{defn.Iprep} on $B$.

Proving that $f$ maps balls contained in $B$ to balls of the correct radius is identical to the previous argument for $r=\ell$.
\end{proof}

\subsection{Some corollaries}

In mixed characteristic, compactness yields the following analogue.

\begin{cor}\label{cor:mixed.char.product.prep}
Let $\cT$ be an $\ell$-\hc-minimal theory, $\ell\geq 1$, and let $K$ be a model of $\cT$. Let $I_1\subset I_2\subset ...\subset I_r$ be proper $\emptyset$-definable ideals in $\cO_K$ with $r\geq \ell$ and take $\xi_i\in \RV_{I_i}$ for every $i$. Put $A=\{\xi_1, ..., \xi_r\}$. Let $f: K\to K$ be an $\cL(A)$-definable function. Then there exists a finite $\emptyset$-definable $C$ and a positive integer $M$ such that for every ball $B$ which is $|M|I_1\cdot I_{\ell+1}\cdots I_r$-next to $C$, there exists a $\mu_B$ such that for $x, x'\in B$ we have that
\[
\mu_B|M||x-x'|\leq |f(x)-f(x')|\leq \mu_B |M|^{-1}|x-x'|.
\]
\end{cor}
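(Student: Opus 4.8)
The plan is to deduce this from \Cref{thm:crit-ell-h} by passing to the finest equicharacteristic zero coarsening and running an $\aleph_0$-saturation argument, in the style of the mixed characteristic transfers of \cite{CHRV} and of the coarsening argument following \Cref{thm:lambdasupJacprop}. First I would assume $K$ is $\aleph_0$-saturated and pass to $K_\eqc$, with valuation $|\cdot|_\eqc$, ring $\cO_{K,\eqc}$ and leading term structure $\RV_\eqc$. By $\ell$-\hc-minimality, $\Th_{\cL_\eqc}(K)$ is $\ell$-h-minimal with respect to $|\cdot|_\eqc$, and since $\cO_{K,\eqc}$ is a pullback of an $\RV$-predicate, \Cref{thm:crit-ell-h} is available over $\cL_\eqc$. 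If the convex subgroup $H$ by which we coarsen is all of $\Gamma_K^\times$, the statement follows directly from $\ell$-\hmix-minimality together with \cite[Cor.\,2.3.8]{CHRV}; so assume $H\subsetneq\Gamma_K^\times$, and note the elementary fact that $\{|M| : M\in\ZZ_{>0}\}$ is coinitial in $H$.

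Next I would sort the ideals along the coarsening. Since $I_1\subseteq\cdots\subseteq I_r$, there is $s$ with $I_i\subseteq\cM_{K,\eqc}$ for $i\le s$ (the fine ideals) and $\cM_{K,\eqc}\subseteq I_i$ for $i>s$ (the coarse ones). For $i>s$, $1+\cM_{K,\eqc}\subseteq 1+I_i$, so $\xi_i$ is the image of some $\eta_i\in\RV_\eqc$; hence $f$ is $\cL_\eqc(\{\xi_i : i\le s\}\cup\{\eta_i : i>s\})$-definable, with the $\eta_i$ free parameters for $\ell$-h-minimality over $K_\eqc$. For $i\le s$ I would pass to $\hat I_i := I_i\cdot\cO_{K,\eqc}$, a proper non-trivial $\emptyset$-definable ideal of $\cO_{K,\eqc}$ whose radius is the $H$-saturation of $\rad I_i$. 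Since $\xi_i\in\RV_{I_i}$ only projects to $\hat\xi_i\in\RV_{\hat I_i}$, I would not transfer $f$ directly but run the argument through the $\emptyset$-definable family $(f_y)_{y\in K^r}$, $f_y=f$ for $y\in Y:=\prod_i\rv_{I_i}^{-1}(\xi_i)$, exactly as in the proof of \Cref{thm:crit-ell-h}: there $\xi_1,\dots,\xi_r$ enter only through the single fibre $Y$, while all other ingredients — \Cref{lem:finite.set.to.functions}, the cell decompositions, the applications of (T$\ell$), family preparation — are $\emptyset$-definable and available over $\cL_\eqc$ via \cite[Cor.\,2.3.8]{CHRV}, \Cref{lem:celldecomp.Kecc.to.K} and \Cref{lem: ellhmin to Tell, D}. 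Re-running that proof in $K_\eqc$, padding $\{\xi_i\}_{i\le s}$ with copies of $\xi_1$ if $s<\ell$, yields a finite $\emptyset$-definable $\cL_\eqc$-set $C_0$ such that $f$ satisfies the $\hat J_\eqc$-valuative Jacobian property with respect to $C_0$, where $\hat J_\eqc=\hat I_1\cdot\hat I_{\ell+1}\cdots\hat I_s$. Passing from $\cL_\eqc$ to $\cL$ by \cite[Cor.\,4.1.17, Cor.\,2.6.10]{CHR}, as at the end of the proof of \Cref{lem: Tell,D to hmixmin}, replaces $C_0$ by a finite $\emptyset$-definable $\cL$-set $C$.

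Finally I would compare radii and overspill. A ball of $K$ is $\hat J_\eqc$-next to $C$ with respect to $|\cdot|_\eqc$ exactly when its relative radius over $C$ lies in $\rad(I_1 I_{\ell+1}\cdots I_s)\cdot H$; and because each coarse ideal $I_j$ ($s<j\le r$) contains an element whose valuation lies in $H$, and $\{|M|\}$ is coinitial in $H$, this equals $\bigcup_{M\in\ZZ_{>0}}|M|\cdot\rad(I_1 I_{\ell+1}\cdots I_r)$. Hence, using the $\cL_\eqc$-conclusion, if for some fixed $M$ the stated two-sided estimate failed on every ball $|M|\cdot I_1 I_{\ell+1}\cdots I_r$-next to $C$, then letting $M$ vary and invoking $\aleph_0$-saturation I would produce a single ball together with two points — and, for the injectivity clause of \Cref{defn.Iprep}, a third — lying in a common ball $\hat J_\eqc$-next to $C$ with respect to $|\cdot|_\eqc$ on which the $K_\eqc$-conclusion fails, a contradiction; lifting the scalar $\mu_B\in\Gamma_{K,\eqc}$ furnished by that conclusion to $\Gamma_K$ and absorbing the resulting $H$-ambiguity between $|M|$ and $|M|^{-1}$ yields the estimate, and the clause for sub-balls $B'\subseteq B$ follows from the same comparison. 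I expect the real obstacle to be this synthesis: keeping straight which product ideal \Cref{thm:crit-ell-h} delivers over $K_\eqc$, verifying the cross-valuation radius comparison, and arranging the saturation so that a single integer $M$ works uniformly over $C$; the treatment of the fine ideals through the $\emptyset$-definable family is the conceptual heart of the argument.
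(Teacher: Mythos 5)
Your overall strategy --- pass to $K_\eqc$, obtain an ideal-valuative Jacobian property there from \Cref{thm:crit-ell-h}, descend the finite set to $\cL$, and conclude by $\aleph_0$-saturation --- is exactly the paper's. But the way you transfer the ideals and parameters to $K_\eqc$ has a genuine gap. You saturate the fine ideals \emph{upward}, setting $\hat I_i = I_i\cdot\cO_{K,\eqc}$, and since $\xi_i$ then only projects to $\RV_{\hat I_i}$ (so $f$ need not be $\cL_\eqc(\hat\xi_1,\dots)$-definable), you propose to re-run the proof of \Cref{thm:crit-ell-h} through the family $(f_y)_y$ with $f_y=f$ on $Y=\prod_i\rv_{I_i}^{-1}(\xi_i)$. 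That re-run breaks: the proof of \Cref{thm:crit-ell-h} hinges on the containment $F'\subseteq Y$, where $F'$ is the coarsely twisted box of the decomposition containing the chosen $y_0\in Y$ --- this is what turns $\tilde f_\eta(F')$ into $f(g_\eta(F'))$, so that property (T$\ell$) controls the image of $f$ itself rather than of the family. For the output to be the $\hat J_\eqc$-valuative Jacobian property, $F'$ must be an $\hat I_1$-twisted box, and constancy of $\rv_{\hat I_1}$ on $F'$ does \emph{not} force $F'\subseteq Y$: an $\hat I_1$-ball is strictly coarser than an $I_1$-ball, and on $F'\setminus Y$ you have no control over $f_y$. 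So the claimed $\hat J_\eqc$-valuative Jacobian property with respect to $C_0$ is not established.

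The paper resolves exactly this tension by saturating \emph{downward}: it sets $c(I)=\bigcap_M |M|I$, an ideal of $\cO_{K,\eqc}$ \emph{contained} in $I$. Then $\RV_{I_i}$ is a quotient of $\RV_{c(I_i)}$, so each $\xi_i$ lifts to some $\tilde\xi_i\in\RV_{c(I_i)}$, $f$ is $\cL_\eqc(\tilde\xi_1,\dots,\tilde\xi_r)$-definable, and \Cref{thm:crit-ell-h} applies in $K_\eqc$ as a black box --- no re-run and no fine/coarse case split. The overspill then matches up on the nose: a pair lying in the same ball $|M|I_1\cdot I_{\ell+1}\cdots I_r$-next to $C$ for every $M$ lies in the same ball $c(I_1\cdot I_{\ell+1}\cdots I_r)$-next to $C$, which is exactly where the coarsened conclusion is available. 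Two smaller points: your radius identity $\rad(I_1 I_{\ell+1}\cdots I_s)\cdot H=\bigcup_{M}|M|\cdot\rad(I_1 I_{\ell+1}\cdots I_r)$ should involve an intersection over $M$ rather than a union (the union is just $\rad(I_1I_{\ell+1}\cdots I_r)$ itself, as $M=1$ is allowed), and only one inclusion is needed; and a coarse ideal $I_j$ with $\cM_{K,\eqc}\subseteq I_j$ need not contain an element of valuation in $H$ (take $I_j=\cM_{K,\eqc}$), though the inclusion your overspill actually requires survives this.
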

\begin{proof}
We may assume that $K$ is $\aleph_0$-saturated. If $I$ is any ideal of $\cO_K$, denote by $c(I) = \cap_M |M|I$ the coarsened ideal in $\cO_{K,\eqc}$. Using Theorem \ref{thm:crit-ell-h} in the coarsened structure $K_\eqc$, we obtain a finite $\cL_\eqc$-definable set $C'$ such that for any ball $B$ which is $c(I_1\cdot I_{\ell+1}\cdots I_r)$-next to $C'$, there exists a $\mu_B\in \Gamma_{K,\eqc}$ such that for any $x, x'\in B$ we have
\begin{equation}\label{eq:T1.in.coarsened}
|f(x)-f(x')|_\eqc = \mu_B |x-x'|_\eqc.
\end{equation}
By Lemma \ref{lem:celldecomp.Kecc.to.K} there exists a finite $\cL$-definable $C$ which contains $C'$. (Alternatively, one can rely on \cite[Lem.\,2.5.3]{CHRV} to find such a $C$.)
Now assume that the desired result is false for any positive integer $M$. By saturation of $K$, we may find a single pair $(x,x')$ in $K^2$ such that $x$ and $x'$ are in the same ball $|M|I_1\cdot I_{\ell+1}\cdots I_r$-next to $C$, for every $M$, but such there is no $M$ for which
\begin{equation*}
\mu_B|M||x-x'|\leq |f(x)-f(x')|\leq \mu_B |M|^{-1}|x-x'|,
\end{equation*}
holds (here $\mu_B$ should be considered in $\Gamma_K$). But then $x$ and $x'$ are in the same ball $c(I_1\cdot I_{\ell+1}\cdots I_r)$-next to $C$, contradicting Equation~\eqref{eq:T1.in.coarsened}.
\end{proof}

We mention a final corollary about the Jacobian property for functions defined by using many $\RV_\lambda$-parameters. This Jacobian property is a stronger version of the $I$-valuative Jacobian property from above.

\begin{defn}[$\RV_\lambda$-Jacobian property]\label{defn:RVjac.prop}
Let $\lambda$ be in $\Gamma_K^\times$ with $\lambda<1$. Say that a function $f:B\to K$ with $B\subset K$ a ball has the $\RV_\lambda$-Jacobian property if the following hold:
\begin{enumerate}
 \item The derivative $f'$ exists on $B$ and $\rv_\lambda\circ f'$ is constant on $B$.
 \item $\rv_\lambda(\frac{f(x) - f(y)}{x - y}) = \rv_\lambda(f')$ for all $x,y$ in $B$.
 \item For every open ball $B' \subset B$, $f(B')$ is either a point or an open ball.
\end{enumerate}
\end{defn}

\begin{cor}\label{cor:RV-jac-prop}
Suppose that $\cT$ is $1$-\hc-minimal, and let $K$ be a model of $\cT$. Consider $\lambda\in\Gamma_K^\times$ with $\lambda<1$, a set $A\subset \RV_\lambda$ and an $\cL(A)$-definable function $f:X\subset K\to K$. Then there are positive integers $m,n$ and a finite $\emptyset$-definable $C \subset K$ such that $X$ is $|m|\lambda^{n}$-prepared by $C$ and such that the restriction of $f$ to $B$ has the $\RV_\lambda$-Jacobian property, for each ball $B$ in $X$ which is $|m|\lambda^{n}$-next to $C$. 
\end{cor}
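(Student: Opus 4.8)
The plan is to upgrade the conclusion of \Cref{thm:crit-ell-h} --- from the $I$-valuative Jacobian property to the full $\RV_\lambda$-Jacobian property --- by re-running the same parameter-trading induction while propagating the \emph{supremum} Jacobian property in place of the valuative one. First, the routine reductions: since $f$ and $X$ each use only finitely many parameters from $\RV_\lambda$, I may assume $A=\{\xi_1,\dots,\xi_r\}$ is finite and (enlarging $A$ if necessary) that $r\ge 1$; put $I:=B_{<\lambda}(0)$, so that $\RV_\lambda=\RV_I$ and ``$\mu$-prepared'' means ``$B_{<\mu}(0)$-prepared''. Extending $f$ by $0$ outside $X$ and applying \Cref{thm:crit-ell-h} (respectively \Cref{cor:mixed.char.product.prep} in mixed characteristic) to $f$ and to the indicator function of $X$, with $\ell=1$ and $I_1=\dots=I_r=I$, produces a finite $\emptyset$-definable set and an integer $m$ such that, on every ball $|m|\lambda^{n_0}$-next to it (for a suitable $n_0$ of order $r$), the set $X$ is prepared, $f$ satisfies $|f(x)-f(x')|=\mu_B|x-x'|$ (up to a factor $|m|^{\pm1}$ in mixed characteristic), and --- in equicharacteristic zero --- the image of every subball is a ball of the correctly scaled radius. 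This already gives item~$(3)$ of \Cref{defn:RVjac.prop} in equicharacteristic zero, and reduces the problem to producing items~$(1)$ and~$(2)$.

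For the latter I would prove the following analogue of \Cref{thm:crit-ell-h}: under the same hypotheses there is a finite $\emptyset$-definable $C$ and an integer $n$ such that $f$ restricted to any ball $B$ that is $|m|\lambda^{n}$-next to $C$ has the $\lambda$-supremum Jacobian property of \Cref{defn:sup-prep.III} (in mixed characteristic, the $|m|\lambda$-supremum Jacobian property, which still suffices as $|m|\lambda\le\lambda$). Granting this, items~$(1)$ and~$(2)$ are immediate: the supremum Jacobian property says precisely that $f$ is $C^1$ with $\rv_\lambda\circ f'$ constant on $B$ (item~$(1)$), and the estimate $|f(x)-f(y)-f'(y)\cdot(x-y)|\ltz\lambda\,|f'(y)|\,|x-y|$ is, after dividing by $x-y$, exactly $\rv_\lambda\!\big(\tfrac{f(x)-f(y)}{x-y}\big)=\rv_\lambda(f')$ (item~$(2)$), the degenerate case $f'\equiv 0$ giving that $f$ is constant. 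Taking $C$ to be the union of this set with the one from the previous paragraph and enlarging $n$, all three items of \Cref{defn:RVjac.prop} hold on balls $|m|\lambda^{n}$-next to $C$, while $X$ is $|m|\lambda^{n}$-prepared by $C$; in mixed characteristic item~$(3)$ is obtained here from a Hensel-type surjectivity argument applied to the supremum Jacobian estimate.

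To prove the analogue I would re-run, essentially verbatim, Steps~1--4 of the proof of \Cref{thm:crit-ell-h}, everywhere replacing the use of the valuative Jacobian property \cite[Lem.\,2.8.5]{CHR} --- applied to the fibrewise functions $f_y$ (after adjoining the $K$-tuple $y$ as constants, which preserves $1$-h-minimality) --- by the one-dimensional supremum Jacobian preparation of \Cref{thm:lambdasupJacprop} with $\ell=1$ (and its mixed characteristic counterpart), and replacing the $1$-valuative Jacobian property of the auxiliary functions $g_\eta$ and $\tilde f_\eta:=(y\mapsto f_y(g_\eta(y)))$ by their supremum Jacobian versions. The point that needs genuine care is that the geometric manipulations of Steps~2--4 --- where images of partial twisted boxes are written as translates or sums of balls and composed with the $1$-Lipschitz reparametrisations $g_\eta$ --- preserve the supremum Jacobian property; this is exactly what \Cref{lem:preservation.supjacprop} provides, together with the remark after \Cref{defn:sup-prep.III} that the left-hand side of the defining estimate depends only on $\rv_\lambda^{(n)}(\grad f)$. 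As in \Cref{thm:crit-ell-h}, each of the $r$ traded parameters contributes one factor of $I$, so the final ball scale is $|m|\lambda^{n}$ with $n$ of order $r$. The mixed characteristic statement then follows, as in the proofs of \Cref{cor:mixed.char.product.prep} and \Cref{lem:celldecomp.Kecc.to.K}, by passing to an $\aleph_0$-saturated model, running the equicharacteristic zero argument in the coarsened field $K_\eqc$, pulling the decomposition back to $K$, and removing the auxiliary integer by a saturation argument.

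The main obstacle is precisely this re-running of the four-step induction of \Cref{thm:crit-ell-h} with the stronger, supremum-Jacobian, invariant in place of the valuative one: verifying that every gluing step survives when one tracks first-order data rather than only radii, and keeping track of how the exponent of $\lambda$ (and the integer $m$ in mixed characteristic) degrades. Everything else --- the reduction to finite $A$, the preparation of $X$, item~$(3)$, and the descent to mixed characteristic --- is routine given the results already in the paper.
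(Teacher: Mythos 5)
Your overall strategy is workable in principle, but as written it has a real gap: the entire technical content is concentrated in the claim that Steps 1--4 of the proof of \Cref{thm:crit-ell-h} can be re-run ``essentially verbatim'' with the supremum Jacobian property in place of the valuative one, and you explicitly defer that verification (``the main obstacle''). This is not a find-and-replace. The parameter-trading steps of \Cref{thm:crit-ell-h} (Steps 2b and 4) establish only the \emph{norm} of the difference quotient, via the comparison $\lambda_0=\tilde{\lambda}_0$ of radii of preimages of twisted boxes; to propagate the leading term $\rv_\lambda\bigl(\tfrac{f(x)-f(x')}{x-x'}\bigr)$ through the factorisation $f=\tilde f_\eta\circ g_\eta^{-1}$ you would need the full $\RV_\lambda$-Jacobian data for $g_\eta$ and $\tilde f_\eta$ simultaneously with all the set-theoretic preparations, a chain-rule computation at the level of $\rv_\lambda$, and a check that the various case distinctions (e.g.\ $g_\eta$ constant on $F'$, or $Y\cap D\neq\emptyset$) still close up. None of this is carried out, and the mixed-characteristic item (3) is likewise only gestured at (``a Hensel-type surjectivity argument'').

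More importantly, the detour is unnecessary, and this is where your proposal diverges from the paper. The paper's proof is a few lines: existence of $f'$ off a finite set is \cite[Cor.\,3.1.4]{CHRV}; constancy of $\rv_\lambda\circ f'$ (item (1) of \Cref{defn:RVjac.prop}) and the ball-image property (item (3)) are obtained directly from \Cref{cor:mixed.char.product.prep} together with compactness, since $\rv_\lambda\circ f'$ is itself an object defined with parameters from $\RV_\lambda$ and part (2) of \Cref{defn.Iprep} already gives images of subballs of the correct radius; and item (2) is then a \emph{formal} consequence of (1) and (3) holding on strictly smaller balls, which one arranges simply by increasing $m$ and $n$. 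In other words, the supremum-Jacobian strengthening you propose to re-derive by a four-step induction is already implied, on suitably shrunk balls, by the valuative statement you quote in your first paragraph. I would encourage you to look for that soft derivation of (2) from (1) and (3) before committing to reproving \Cref{thm:crit-ell-h} with a stronger invariant.
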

\begin{proof}
That $f'$ exists away from a set like $C$ follows from \cite[Cor.\,3.1.4]{CHRV}. That both $f$ and $X$ can be $|m|\lambda^{n}$-prepared by $C$ for such $m,n,C$ with furthermore properties (1) and (3) from \Cref{defn:RVjac.prop} for $f$ follows from \Cref{cor:mixed.char.product.prep} and compactness. So, it only remains to ensure (2) from \Cref{defn:RVjac.prop}. But (2) follows by making $m$ and $n$ larger if necessary. Indeed, (2) follows from (1) and (3) on strictly smaller balls, and, making $n$ and $m$ larger creates smaller balls. 
\end{proof}

\section{Some further questions}\label{sec:further.questions}

In this section we record some natural follow-up questions. Some of these already appear in \cite{CHRV}, but they are relevant to the contents of this article as well.

The most immediate question is about distinguishing between the various notions of $\ell$-h-minimality.

\begin{question}
Are the notions of 1-h-minimality and $\omega$-h-minimality equivalent? How about 0-h-minimality and 1-h-minimality? Or more generally, are there any $\ell<\ell'$ for which $\ell$-h-minimality and $\ell'$-h-minimality are equivalent?
\end{question}

In a different direction, one can ask many questions about 0-h-minimality, and by extension 0-\hc-minimality.

\begin{question}
Is there an analytic criterion for 0-h-minimality, similar to property (T$\ell$) for $\ell$-h-minimality? Or perhaps for 0-\hc-minimality?
\end{question}

\begin{question}
Are 0-h-minimality and 0-\hc-minimality equivalent? And how much of the general theory of Hensel minimality works under 0-\hc-minimality?
\end{question}

In \cite{CHRV} a mixed characteristic analytic criterion is given for 1-h-minimality. In this article, we gave such a criterion in equicharacteristic zero only. 

\begin{question}
What is the correct property (T$\ell$) in mixed characteristic? And is it (together with property (D)) equivalent to $\ell$-h-minimality?
\end{question}

In Lemma \ref{lem:uniform.prep.ellhmin} we saw that $\ell$-\hc-minimality implies $\ell$-\hmix-minimality. In \cite{CHRV} it is proved that these are equivalent for $\ell=1$, by passing through a mixed characteristic version of property (T1, D). 

\begin{question}
Let $\ell\geq 1$. Is $\ell$-\hmix-minimality equivalent to $\ell$-\hc-minimality? 
\end{question}

The following question is also natural, about preparing functions $f: K^n\to K^m$ for $n,m\geq 1$ as in Theorem~\ref{thm.f.ell.III}.

\begin{question}
Is there some form of property (T$\ell$), or even of the compatible domain and image preparation Theorem \ref{thm.f.ell.III}, for functions $f: K^\ell\to K^m$ under $\ell$-h-minimality?
\end{question}

\bibliographystyle{amsplain}
\bibliography{anbib}

\end{document}